\def\CT{{\mathcal T}}
\def\ds{\displaystyle}
\def\O{\Omega}
\def\E{E}
\newcommand{\norm}[1]{\lVert#1\rVert}
\newcommand{\vertiii}[1]{{\left\vert\kern-0.25ex\left\vert\kern-0.25ex\left\vert #1 
		\right\vert\kern-0.25ex\right\vert\kern-0.25ex\right\vert}}
\newcommand\bu{\boldsymbol{u}}
\newcommand\bv{\boldsymbol{v}}
\newcommand\bw{\boldsymbol{w}}
\newcommand\bF{\boldsymbol{f}}
\def\CT{{\mathcal T}}
\newcommand\bcI{\boldsymbol{\mathcal{I}}}
\newcommand{\dd}{\texttt{d}}
\newcommand\bsig{\boldsymbol{\sigma}}
\newcommand\btau{\boldsymbol{\tau}}
\newcommand\R{\mathbb{R}}
\renewcommand\H{\mathrm{H}}
\renewcommand\L{\mathrm{L}}
\renewcommand\O{\Omega}
\def\betab{\boldsymbol{\beta}}
\newcommand\bdiv{\mathop{\mathbf{div}}\nolimits}
\renewcommand\div{\mathop{\mathrm{div}}\nolimits}
\newcommand\rot{\mathop{\mathrm{rot}}\nolimits}
\newcommand\brot{\mathop{\mathbf{rot}}\nolimits}
\newcommand\td{\mathtt{d}}
\newcommand\tr{\mathop{\mathrm{tr}}\nolimits}
\newcommand\LO{\L^2(\O)}
\crefname{hypothesis}{Hypothesis}{Hypotheses}
\title{A mixed virtual element discretization for  the generalized Oseen problem \thanks{Submitted to the editors DATE.
\funding{The first author was partially supported by
	DIUBB through project 2120173 GI/C Universidad del B\'io-B\'io (Chile).
	The second author was supported by  ANID-Chile through FONDECYT project 1231619 (Chile).  %The third author was partially supported by...
}}}
\author{Felipe Lepe\thanks{GIMNAP-Departamento de Matem\'atica, Universidad del B\'io - B\'io, Casilla 5-C, Concepci\'on, Chile. \email{flepe@ubiobio.cl}.}
\and Gonzalo Rivera\thanks{Departamento de Ciencias Exactas,
	Universidad de Los Lagos, Casilla 933, Osorno, Chile. \email{gonzalo.rivera@ulagos.cl}.}}
\begin{document}

\maketitle

% REQUIRED
\begin{abstract}
In this paper we introduce a mixed virtual element method to approximate the solution for the two dimensional generalized Oseen problem. We introduce the pseudostress as an additional unknown, which allows to eliminate the pressure from the system; the pressure can be recovered via a post-process of the pseudostress tensor. We prove existence and uniqueness of the continuous solution via a fixed point argument.  Under standard mesh assumptions, we develop a virtual element method to approximate both the tensor and the velocity field, and we show  that it is stable. Furthermore, we provide a priori error estimates for the method and validate them through a series of numerical tests using different polygonal meshes.
\end{abstract}

% REQUIRED incpor
\begin{keywords}
Oseen equations, virtual elements, a priori error estimates, polygonal meshes, mixed formulations.
\end{keywords}

% REQUIRED
\begin{AMS}
	  35Q35,  65N15, 65N30, 76D07
\end{AMS}
%%__________________________________________________________________________________
%%      ######################### SECTION ##########################################
%%__________________________________________________________________________________
\section{Introduction}
\label{sec:intro}
The Oseen equations are an important flow model which is seen as a linearization of the Navier-Stokes equations in each step of time. The Oseen problem that we study in this paper is the following: 
let $\O \subset \R^2$ be an open and bounded domain with Lipschitz boundary $\partial\O$.  According to \cite{MR3561143}, the aim is to find the velocity field $\bu$ and the scalar pressure $p$ satisfying the following set of equations:
\begin{equation}
\label{eq:Oseen_system}
\left\{
\begin{aligned}
-\nu\Delta\bu + (\betab \cdot \nabla)\bu + \kappa\bu + \nabla p &= \bF && \text{in }\O,\\
\div\bu &= 0 && \text{in }\O,\\
\bu &= \mathbf{0} && \text{on }\partial\O,\\
\int_{\O}p &= 0.
\end{aligned}
\right.
\end{equation}	
where  $\boldsymbol{f}$ represents  an external force acting on the domain, the parameter $\nu>0$ denotes the kinematic viscosity, 
	$\boldsymbol{\beta}$ is a given vector field representing the steady-state velocity, 
	and $\kappa>0$ is the permeability coefficient of the Brinkman region. A more general boundary condition  as $\bu=\bu_D$, where $\bu_D\in \H^{1/2}(\partial\O)$ and satisfies the compatibility condition $\int_{\partial\O}\bu_D\cdot\boldsymbol{n}$, where  $\boldsymbol{n}$ is the outward unit normal vector on $\partial\O$  can be considered, but for simplicity, our theoretical work is focused on null Dirichlet boundary conditions.

An important aspect of the model \eqref{eq:Oseen_system} is that each term represents a problem by itself. Let us be precise on this. If we set $\kappa= 0$ in the free-flow subdomain, one recovers the classical Oseen system, where the properties of a porous media are not involved in the system. On the other hand,  
 if we take  $\kappa \gg 0$ and $\boldsymbol{\beta} = \boldsymbol{0}$,   \eqref{eq:Oseen_system} becomes the so-called  Brinkman equations whereas if $\kappa = 0$ and $\boldsymbol{\beta} = \boldsymbol{0}$ throughout the domain, the system reduces to the Stokes problem.

%In \eqref{eq:Oseen_system}, $\bu$ represents the vectorial velocity field, $p$ is the  scalar pressure, $\nu>0$ is the kinematic velocity, $\boldsymbol{f}$ and $\boldsymbol{g}$ represent external forces acting on the domain and the boundary, $\bn$ is the unit outward vector, whereas the zero measure of the pressure is the standard assumption to ensure uniqueness of the solution  for \eqref{eq:Oseen_system} when only Dirichlet boundary conditions are involved and the last equation is the standard compatibility condition. Let us remark that the boundary $\partial\O$ is assumed as polygonal.}

%\AS{Completar }

The velocity-pressure formulation of \eqref{eq:Oseen_system} is the simplest approach in which this system can be studied. However, it is possible to consider additional unknowns with physical interpretation as the stress, vorticity, stream functions, etc. that can be also considered in order to derive a mixed formulation of \eqref{eq:Oseen_system}. These variables and formulations, together with their approximations based on finite elements   are well discussed in \cite{MR851383}. However, our intention is to go beyond these formulations, introducing the so called pseudostress tensor. This unknown, that relates the gradient of the velocity and the pressure of the fluid was introduced, to the best of our knowledge, in \cite{cai2010} and from this point and onwards, several methods and mixed formulations have emerged with the introduction of this unknown. Let us refer to the following non-exhaustive list of papers where mixed formulations based in the pseudostress tensor are considered \cite{MR3629152,MR2594823,MR2878511,MR4627698,MR4805930,MR4789346,MR4890763}. Of course we encourage  the reader to read the references therein each of the aforementioned articles. 

We now aim to study the numerical approximation of \eqref{eq:Oseen_system} using the Virtual Element Method (VEM). Introduced in \cite{MR2997471}, the VEM has proven over  the last years to be a suitable and flexible alternative for the numerical analysis of partial differential equations. Several applications of this method to problems in continuum mechanics and related areas can be found in \cite{MR4510898}.  In particular,  for linear and non-linear flow problems we mention \cite{MR3164557,MR4673339,MR3614887,MR3895873,MR3803834,MR4300149} and more precisely for  the generalized Oseen problem \eqref{eq:Oseen_system} and its variants, we refer the reader to  \cite{MR4671455, MR4619470, MR4369815, MR5004638, MR4419353} showing the relevance of the  VEM for fluid mechanics problems which is in constant progress.

%\alert{Aqui abajo debemos mencionar algo respecto los tabajos Gatica-Muna y  Gatica-Sequeira para Navier-Stokes... }

%Also we mention the valuable article \cite{MR4586821} that contains all the necessary theoretical fundings to understand the VEM, particularly what refers to the mixed VEM. 

We now specify the main contributions of this work. First, our aim is to consider a mixed formulation of \eqref{eq:Oseen_system} via the aforementioned pseudostress tensor. This unknown leads to a mixed formulation where tensorial versions of the VEM spaces to approximate the $\H(\div)$ space are needed. These types of spaces are available in the works \cite{MR3614887,MR3629152}  for the Stokes and Brinkman flow problems. Now, our goal is to  implement these spaces for \eqref{eq:Oseen_system} where the convective term plays a new role in the analysis. First of all, the presence of the convective term does not ensure the coercivity of the bilinear form associated to the  deviator terms. Hence, the strategy changes, and a fixed point argument is needed to derive existence and uniqueness of solution at the continuous level, implying the assumption of small data for the problem. This has been needed also for the analysis of the eigenvalue problem \cite{MR4983488} in order to establish that the solution operators associated to the source problems are  well defined. The difference now is that when the saddle point is presented, in the case of  \eqref{eq:Oseen_system} a regular perturbed saddle point problem as the presented in \cite{MR3097958} appears, involving additional data that must be considered in order to satisfy the contraction that the fixed point strategy requires. When the VEM is introduced,  the stability of the mixed formulation at discrete level needs suitable arguments because of the perturbed saddle point problem. Also, we prove uniqueness of solution, convergence,   and error estimates tracking carefully the constants of the problem. The error estimates that we provide are optimal for the VEM that we are studying for each of the variables involved in the matrix system, including an a priori error estimate for the pressure, that we obtain using the error estimates of the velocity and pseudostress.  

%On these references, the conservation of mass and momentum has not been considered for the Stokes equations and related problems. However, papers like \cite{MR4939289,MR4307023} introduce novel efforts in the development of  numerical techniques capable to conserve mass and momentum. Particularly, these formulations are focused on the standard  mixed finite elements which are the natural choice to tackle these important issue. This is where our contribution begins, since our intention is to analyze a mixed formulation for \eqref{eq:Oseen_system} that conserve mass and momentum under the approach of the virtual element method.
\subsection{Outline} The paper is organized as follows: in Section \ref{sec:model} we introduce the mixed formulation of \eqref{eq:Oseen_system} and  the functional framework in which our analysis will be performed. We prove the existence and uniqueness of solutions with a fixed point argument which requires a smallness condition on  the data. In Section \ref{secVEM} we introduce the mixed virtual element method, where the basic assumptions on the meshes, degrees of freedom, and local and global spaces are introduced. Also, the VEM version of the bilinear forms is introduced and stability of the discrete schemes is proved, together with the uniqueness of discrete solutions, once again, under a small data assumption. Section \ref{sec:error} is dedicated to the error analysis and finally in Section \ref{sec:numerics} we report a series of numerical tests to assess the performance of the method.

%%%%%%%%%%%%%%%%%%%%%%%%%%%%%%%%%%%%%%%%%%%%%%%%%%%%%%%%%%%%%
%%%%%%%%%%%%%%%%%%%%%%%%%%%%%%%%%%%%%%%%%%%%%%%%%%%%%%%%%%%%%
%%%%%%%%%%%%%%%%%%%%%%%%%%%%%%%%%%%%%%%%%%%%%%%%%%%%%%%%%%%%%
%%%%%%%%%%%%%%%%%%%%%%%%%%%%%%%%%%%%%%%%%%%%%%%%%%%%%%%%%%%%%
%%%%%%%%%%%%%%%%%%%%%%%%%%%%%%%%%%%%%%%%%%%%%%%%%%%%%%%%%%%%%
%%%%%%%%%%%%%%%%%%%%%%%%%%%%%%%%%%%%%%%%%%%%%%%%%%%%%%%%%%%%%
\subsection{Preliminary notations and definitions}
Throughout our work, we will use standard notations for Lebesgue and Sobolev spaces, particularly we will use the classic notations $\L^2(\O)$ and $\H^1(\O)$ where these Hilbert spaces are endowed with the natural norms that we denote by $\|\cdot\|_{0,\O}$ and $\|\cdot\|_{1,\O}$, respectively. We also denote by $|\cdot|_{1,\O}$ the seminorm on $\H^1(\O)$. If $\bv\in\mathbb{R}^2$, we define the gradient and divergence operators in the classic fashion, i.e. 
$\nabla\bv:=(\partial v_i/\partial x_j)$, for $i,j=1,2$ and $\div\bv:=\sum_{j=1}^2(\partial v_j/\partial x_j)$. We denote by $\mathbf{\H}(\div;\O)$ and $\mathbb{H}(\bdiv;\O)$ the vectorial and tensorial version of the $\H(\div)$ spaces, which are endowed with the standard norms that we denote by $\|\cdot\|_{\div,\O}$ and $\|\cdot\|_{\bdiv,\O}$, respectively. Also, for a tensor $\btau\in\mathbb{R}^{2\times 2}$ we define its deviator by $\btau^{d}:=\btau-2^{-1}\tr(\btau)\mathbb{I}$
where $\mathbb{I}\in\mathbb{R}^{2\times 2}$ is the identity tensor.
%%__________________________________________________________________________________
%%      ######################### SECTION ##########################################
%%__________________________________________________________________________________

%%%%%%%%%%%%%%%%%%%%%%%%%%%%%%%%%%%%%%%%%%%%%%%%%%%%%%%%%%%%%
%%%%%%%%%%%%%%%%%%%%%%%%%%%%%%%%%%%%%%%%%%%%%%%%%%%%%%%%%%%%%
%%%%%%%%%%%%%%%%%%%%%%%%%%%%%%%%%%%%%%%%%%%%%%%%%%%%%%%%%%%%%%%%%%%%%%%%%%%%%%%%%%%%%%%%%%%%%%%%%%%%%%%%%%%%%%%%%%%%%%%%%%
%%%%%%%%%%%%%%%%%%%%%%%%%%%%%%%%%%%%%%%%%%%%%%%%%%%%%%%%%%%%%
%%%%%%%%%%%%%%%%%%%%%%%%%%%%%%%%%%%%%%%%%%%%%%%%%%%%%%%%%%%%%
\section{The continuous problem}
\label{sec:model}
The aim now is to introduce the mixed formulation for  \eqref{eq:Oseen_system}. From now and on, we assume that the steady-state velocity in \eqref{eq:Oseen_system} is such that $\boldsymbol{\beta}\in\mathbf{W}^{1,\infty}(\O)$. Let us introduce the pseudostress tensor as follows
\begin{equation}
\label{eq:pseudostress}
\bsig:=\nu\nabla\bu-(\bu\otimes\boldsymbol{\beta})-p\mathbb{I}\quad\text{in}\,\O.
\end{equation}
From these definitions, we obtain directly that $-\bdiv\bsig+\kappa\bu=\bF$ in $\O$. On the other hand, if we apply 
the trace operator in \eqref{eq:pseudostress} we obtain the following identity for the pressure
\begin{equation}
\label{eq:continuous_pressure}
p=-\frac{1}{2}(\tr(\bsig)+\tr(\bu\otimes\boldsymbol{\beta}))\quad\text{in}\,\,\O.
\end{equation}
Moreover, due to the condition $(p,1)_{0,\O}=0$ we deduce that
\begin{equation*}
(\tr(\bsig),1)_{0,\O}=-(\tr(\bu\otimes\boldsymbol{\beta}),1)_{0,\O}\quad\text{in}\,\,\O.
\end{equation*}
Finally, it is easy to check from  the incompressibility condition the following identity  $\bsig^{\texttt{d}}=\nu\nabla\bu-(\bu\otimes\boldsymbol{\beta})^{\texttt{d}}$ in $\O$. 
Hence, system  \eqref{eq:Oseen_system} can be equivalently written as follows:
\begin{align*}
	\bsig^{\texttt{d}}-\nu\nabla\bu+(\bu\otimes\boldsymbol{\beta})^{\texttt{d}} & =  \boldsymbol{0} & \text{in $\Omega$},\\
	\bdiv\bsig-\kappa\bu &= -\bF & \text{in $\Omega$},\\
	\bu &= \boldsymbol{0}& \text{on $\partial\O$}\\
	(\tr(\bsig),1)_{0,\O}+(\tr(\bu\otimes\boldsymbol{\beta}),1)_{0,\O}&=0& \text{in}\,\Omega.
	%	\bsig\bn   &=\boldsymbol{0} & \text{on $\Gamma_2$}.
\end{align*}

To simplify the presentation of the material, we define the spaces $\mathbb{H}:=\mathbb{H}(\bdiv;\O)$ and $\mathbf{Q}:=\L^2(\O)^2$, which we endow with the respective standard norms. With these spaces at hand, a variational formulation of problem above reads as follows. Given $\bF\in\mathbf{Q}$, find $(\bsig,\bu)\in\mathbb{H}\times\mathbf{Q}$ such that
\begin{equation}
\label{eq:variational1}
	\left\{
	\begin{array}{rcll}
\displaystyle\frac{1}{\nu}\int_{\O}\bsig^{\texttt{d}}:\btau^{\texttt{d}}+\frac{1}{\nu}(\bu\otimes\boldsymbol{\beta})^{\texttt{d}}: \btau+\int_{\O}\bu\cdot\bdiv\btau&=&0&\forall \btau\in \mathbb{H},\\

\displaystyle\int_{\O}\bdiv\bsig\cdot\bv-\int_{\O}\kappa\bu\cdot\bv&=&-\displaystyle\int_{\O}\bF\cdot\bv&\forall \bv\in \mathbf{Q}.
\end{array}
	\right.
\end{equation}

Now we introduce the continuous bilinear forms $a:\mathbb{H}\times\mathbb{H}\rightarrow\mathbb{R}$, $b:\mathbb{H}\times\mathbf{Q}\rightarrow\mathbb{R}$, $c:\mathbf{Q}\times\mathbb{H}\rightarrow\mathbb{R}$, and $d:\mathbf{Q}\times\mathbf{Q}\rightarrow\mathbb{R}$, which are defined as follows
\begin{align*}
a(\boldsymbol{\rho},\btau)&:=\dfrac{1}{\nu}\int_\O\boldsymbol{\rho}^{\dd}:\btau^{\dd}\qquad &\boldsymbol{\rho},\btau\in\mathbb{H},\\
b(\btau,\bv)&:=\int_\O\bdiv(\btau)\cdot\bv\qquad &(\btau,\bv)\in \mathbb{H}\times\mathbf{Q},\\
c(\bv,\btau)&:=\dfrac{1}{\nu}\int_\O(\bv\otimes\boldsymbol{\beta})^{\dd}:\btau\qquad &(\bv,\btau)\in \mathbf{Q}\times\mathbb{H},\\
d(\bv,\bw)&:=\int_\O\kappa\bv\cdot\bw\qquad &\bv,\bw\in \mathbf{Q}.
\end{align*}

We also introduce the functional $F:\mathbf{Q}	\rightarrow\mathbb{R}$ defined by $F(\bv):=-(\bF,\bv)_{0,\O}$. Hence, with all these definitions at hand, problem \eqref{eq:variational1} can be written  as follows: Find $(\bsig,\bu)\in\mathbb{H}\times\mathbf{Q}$ such that
\begin{equation}
\label{eq:variational1_1}
	\left\{
	\begin{array}{rcll}
a(\bsig,\btau)+b(\btau,\bu)+c(\bu,\btau)&=&0&\forall \btau\in \mathbb{H},\\

b(\bsig,\bv)-d(\bv,\bu)&=&F(\bv)&\forall \bv\in \mathbf{Q}.
\end{array}
	\right.
\end{equation}

For the analysis, it is convenient to decompose $\mathbb{H}$ in a suitable way. Let us define the space $\mathbb{H}_0:=\{\btau\in\mathbb{H}\,:\,(\tr(\btau),1)_{0,\O}=0\}$. Hence, it is possible to prove that $\mathbb{H}:=\mathbb{H}_0\oplus \mathbb{R}\mathbb{I}$. More precisely, if $\btau\in\mathbb{H}$, there exist $\btau_0\in\mathbb{H}_0$ and $d\in\mathbb{R}$ such that $\btau=\btau_0+d\mathbb{I}$, where $d=(2|\O|)^{-1}(\tr(\btau),1)_{0,\O}$. Hence, problem \eqref{eq:variational1_1} now can be stated as follows: Find $(\bsig,\bu)\in\mathbb{H}_0\times\mathbf{Q}$ such that
\begin{equation}
\label{eq:variational1_H0}
	\left\{
	\begin{array}{rcll}
a(\bsig,\btau)+b(\btau,\bu)+c(\bu,\btau)&=&0&\forall \btau\in \mathbb{H}_0,\\
b(\bsig,\bv)-d(\bv,\bu)&=&F(\bv)&\forall \bv\in \mathbf{Q}.
\end{array}
	\right.
\end{equation}

Let us introduce  the space $\mathbb{K}$ as the kernel of bilinear form $b(\cdot,\cdot)$, which is defined as follows
\begin{equation*}
\mathbb{K}:=\{\btau\in\mathbb{H}_0\,:\, b(\btau,\bv)=0\quad\forall\bv\in\mathbf{Q}\}=\{\btau\in\mathbb{H}_0\,:\, \bdiv\btau=0\quad\text{in}\,\O\}.
\end{equation*}
Also, we recall the following technical result (see \cite[Chapter 9, Proposition 9.1.1]{MR3097958}).
\begin{lemma}\label{lmm:cota}
There exists a constant $C_1>0$ such that
\begin{equation*}
C_1\|\btau\|_{0,\O}^2\leq \|\btau^{\dd}\|_{0,\O}^2+\|\bdiv\btau\|_{0,\O}^2\qquad\forall \btau\in \mathbb{H}_0.
\end{equation*}
\end{lemma}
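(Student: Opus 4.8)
The plan is to reduce the estimate to the classical surjectivity of the divergence operator on a bounded Lipschitz domain. First I would split any tensor into its deviatoric and spherical parts, $\btau=\btau^{\dd}+\tfrac12\tr(\btau)\mathbb{I}$, and observe that these two pieces are orthogonal in the Frobenius inner product, since $\btau^{\dd}:\mathbb{I}=\tr(\btau^{\dd})=0$. This yields the exact identity
\begin{equation*}
\|\btau\|_{0,\O}^2=\|\btau^{\dd}\|_{0,\O}^2+\tfrac12\|\tr(\btau)\|_{0,\O}^2,
\end{equation*}
so the whole statement reduces to controlling the scalar $q:=\tr(\btau)$ by $\|\btau^{\dd}\|_{0,\O}$ and $\|\bdiv\btau\|_{0,\O}$.

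The key observation is that, since $\btau\in\mathbb{H}_0$, we have $\int_\O q=0$, i.e. $q\in\L_0^2(\O)$. The main tool is that on a bounded Lipschitz domain the divergence operator $\div:\H^1_0(\O)^2\to\L^2_0(\O)$ is surjective with a bounded right inverse: there exists $\bv\in\H_0^1(\O)^2$ with $\div\bv=q$ and $\|\bv\|_{1,\O}\leq C\|q\|_{0,\O}$. I would then test $\btau$ against $\nabla\bv$ and integrate by parts, using $\bv|_{\partial\O}=\0$, to get $\int_\O\btau:\nabla\bv=-\int_\O\bdiv\btau\cdot\bv$. Splitting the left-hand side as $\btau:\nabla\bv=\btau^{\dd}:\nabla\bv+\tfrac12 q\,\div\bv=\btau^{\dd}:\nabla\bv+\tfrac12 q^2$ and rearranging gives
\begin{equation*}
\tfrac12\|q\|_{0,\O}^2=-\int_\O\bdiv\btau\cdot\bv-\int_\O\btau^{\dd}:\nabla\bv.
\end{equation*}

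To finish, I would apply Cauchy--Schwarz to each term, bound $\|\bv\|_{0,\O}$ and $\|\nabla\bv\|_{0,\O}$ by $\|\bv\|_{1,\O}$, and use the a priori bound $\|\bv\|_{1,\O}\leq C\|q\|_{0,\O}$, obtaining $\tfrac12\|q\|_{0,\O}^2\leq C(\|\bdiv\btau\|_{0,\O}+\|\btau^{\dd}\|_{0,\O})\|q\|_{0,\O}$, hence $\|q\|_{0,\O}\leq 2C(\|\bdiv\btau\|_{0,\O}+\|\btau^{\dd}\|_{0,\O})$. Inserting this into the orthogonal splitting and collecting constants yields the asserted inequality with $C_1$ depending only on $C$ (for instance $C_1=(1+4C^2)^{-1}$). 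The only genuinely nontrivial ingredient is the bounded right inverse of the divergence, which is exactly where the Lipschitz character of $\O$ enters; everything else is elementary algebra and Cauchy--Schwarz. Since the statement is quoted verbatim from Boffi--Brezzi--Fortin, the authors will most likely simply cite it, but this is the self-contained argument behind it.
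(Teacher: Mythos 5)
Your argument is correct, and indeed the paper offers no proof at all: it simply cites \cite[Chapter 9, Proposition 9.1.1]{MR3097958}, exactly as you anticipated. Your self-contained derivation --- orthogonal splitting $\btau=\btau^{\dd}+\tfrac12\tr(\btau)\mathbb{I}$, the zero-mean trace from the definition of $\mathbb{H}_0$, the bounded right inverse of $\div:\H^1_0(\O)^2\to\L^2_0(\O)$, and Green's formula for $\mathbb{H}(\bdiv;\O)$ tensors against $\H^1_0$ fields --- is precisely the standard proof behind that citation, and the constant bookkeeping (e.g.\ $C_1=(1+4C^2)^{-1}$) checks out.
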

Then, Lemma \ref{lmm:cota} allows us to prove the $\mathbb{K}$-coercivity of $a(\cdot,\cdot)$. Indeed, for $\btau\in \mathbb{K}$ we have 
\begin{equation*}
a(\btau,\btau)=\frac{1}{\nu}\|\btau^{\texttt{d}}\|_{0,\O}^2
\geq \frac{C_1}{\nu}\|\btau\|_{0,\O}^2=\frac{C_1}{\nu}\|\btau\|_{\bdiv,\O}^2.
\end{equation*}

On the other hand, we have  that  $b(\cdot,\cdot)$ satisfies the following inf-sup condition
\begin{equation}\label{eq:inf-sup_cont}
\displaystyle\sup_{\btau\in\mathbb{H}_0}\frac{b(\btau,\boldsymbol{q})}{\|\btau\|_{\bdiv,\O}}\geq\gamma\|\boldsymbol{q}\|_{0,\O}\quad\forall \boldsymbol{q}\in\mathbf{Q}.
\end{equation}

Finally, for all $\bv\in\mathbf{Q}$,  bilinear form $d(\cdot,\cdot)$ is such that
\begin{equation}\label{eq:c_coercive}
d(\bv,\bv)=\int_{\O}\kappa\bv\cdot\bv= \kappa\|\bv\|_{0,\O}^2\quad\forall\bv\in\mathbf{Q}.
\end{equation}

For the analysis of \eqref{eq:variational1_H0} it is necessary to perform a fixed point argument. With this in mind, let us consider the following    problem: Find $(\bsig,\bu)\in\mathbb{H}_0\times\mathbf{Q}$ such that 
\begin{equation}\label{def:oseen_system_source_00}
	\left\{
	\begin{array}{rcll}
a(\bsig,\btau)+ b(\btau,\bu)&=&\mathcal{G}_{\boldsymbol{w},\boldsymbol{\beta}}(\btau)&\forall \btau\in \mathbb{H}_0,\\
 b(\bsig, \bv)-d(\bv,\bw)&=&F(\bv)&\forall \bv\in \mathbf{Q}.
\end{array}
	\right.
\end{equation}
where $\mathcal{G}_{\boldsymbol{w},\boldsymbol{\beta}}:\mathbb{H}_0\rightarrow\mathbb{R}$ is the  functional defined by 
$$
\mathcal{G}_{\boldsymbol{w},\boldsymbol{\beta}}(\btau):=-\displaystyle\frac{1}{\nu}\int_{\O}(\boldsymbol{w}\otimes\boldsymbol{\beta})^{\texttt{d}}:\btau \,\,\,\forall\btau\in\mathbb{H}_0.
$$ 
It is easy to check that these functionals $\mathcal{G}_{\boldsymbol{w},\boldsymbol{\beta}}$ and $F$ are bounded, i.e.%and are such that
$$
\frac{|\mathcal{G}_{\boldsymbol{w},\boldsymbol{\beta}}(\btau)|}{\|\btau\|_{\bdiv,\O}}\leq\frac{2+\sqrt{2}}{2\nu}\|\boldsymbol{w}\|_{0,\O}\|\boldsymbol{\beta}\|_{\infty,\O}\,\,\,\,\text{and}\,\,\,\,\,\frac{|F(\bv)|}{\|\bv\|_{0,\O}}\leq \|\boldsymbol{f}\|_{0,\O}.
$$
Since  $a(\cdot,\cdot)$, $b(\cdot,\cdot)$, and $d(\cdot,\cdot)$ are continuous,  the following estimates hold
$$
|a(\bsig,\btau)|\leq\dfrac{(2+\sqrt{2})^2}{4\nu}\|\bsig\|_{\bdiv,\O}\|\btau\|_{\bdiv,\O}\,\,\,\,\,\,\,\,\,|b(\bsig,\bv)|\leq\|\bsig\|_{\bdiv,\O}\|\bv\|_{0,\O},
$$
and $|d(\bv,\bw)|\leq \kappa\|\bv\|_{0,\O}\|\bw\|_{0,\O}$. As a consequence of the previous properties and  according to \cite[Theorem 4.3.2]{MR3097958}, the solution of \eqref{def:oseen_system_source_00} satisfies the following estimates of data dependence
\begin{equation}
\label{eq:bound_sigma}
\|\bsig\|_{\bdiv,\O}\leq \frac{\nu\gamma^2+\kappa(2+\sqrt{2})^2}{C_1\gamma^2}\|\mathcal{G}_{\boldsymbol{w},\boldsymbol{\beta}}\|_{\mathbb{H}(\bdiv,\O)'}+\frac{2+\sqrt{2}}{\sqrt{C_1}\gamma}\|F\|_{0,\O},
\end{equation}
and
\begin{equation}
\label{eq:bound_u}
\|\bu\|_{0,\O}\leq \frac{2+\sqrt{2}}{\sqrt{C_1}\gamma}\|\mathcal{G}_{\boldsymbol{w},\boldsymbol{\beta}}\|_{\mathbb{H}(\bdiv,\O)'}+\frac{(2+\sqrt{2})^2}{\kappa\dfrac{(2+\sqrt{2})^2}{4}+2\nu\gamma^2}\|F\|_{0,\O}.
\end{equation}

The next step is to conclude the existence and uniqueness of solutions for the problem. To do this task,  we use a fixed point strategy. Let us introduce the following ball
\begin{equation}
\label{eq:ball}
M_{\textrm{R}_0}:=\{\bv\in\mathbf{Q}\,:\, \|\bv\|_{0,\O}\leq \textrm{R}_0\},
\end{equation}
where $\textrm{R}_0>0$. Also, we define the following operator
\begin{align*}
\mathcal{K}&:\mathbf{Q}\rightarrow\mathbf{Q}\times\mathbb{H}_0,\\
&\bw\mapsto \mathcal{K}(\bw):=(\mathcal{K}_1(\bw),\mathcal{K}_2(\bw))=(\bu,\bsig).
\end{align*}

Let $\bw\in M_{\textrm{R}_0}$. From the definition of $\mathcal{K}_1$, \eqref{eq:bound_u}, and  \eqref{eq:ball}, we have 
\begin{multline*}
\|\mathcal{K}_1(\bw)\|_{0,\O}=\|\bu\|_{0,\O}\leq \frac{2+\sqrt{2}}{\sqrt{C_1}\gamma}\|\mathcal{G}_{\boldsymbol{w},\boldsymbol{\beta}}\|_{\mathbb{H}(\bdiv,\O)'}+\frac{(2+\sqrt{2})^2}{\kappa\dfrac{(2+\sqrt{2})^2}{4}+2\nu\gamma^2}\|F\|_{0,\O}\\ %\frac{2\nu^{-1/2}}{\alpha^{1/2}\gamma}\|\mathcal{G}_{\boldsymbol{w},\boldsymbol{\beta}}\|_{\mathbb{H}(\bdiv,\O)'}+\frac{4\nu^{-1}}{\kappa\nu^{-1}+2\gamma^2}\|F\|_{0,\O}\\
\leq  \frac{(2+\sqrt{2})^2}{4\nu\sqrt{C_1}\gamma}(\|\bw\|_{0,\O}^2+\|\boldsymbol{\beta}\|_{\infty,\O}^2)+\frac{(2+\sqrt{2})^2}{\kappa\dfrac{(2+\sqrt{2})^2}{4}+2\nu\gamma^2}\|\bF\|_{0,\O}\\
\leq  \frac{(2+\sqrt{2})^2}{4\nu\sqrt{C_1}\gamma}(\textrm{R}_0^2+\|\boldsymbol{\beta}\|_{\infty,\O}^2)+\frac{(2+\sqrt{2})^2}{\kappa\dfrac{(2+\sqrt{2})^2}{4}+2\nu\gamma^2}\|\bF\|_{0,\O}.
\end{multline*}
Now, defining $\widehat{C}:=\displaystyle\frac{(2+\sqrt{2})^2}{4\nu\sqrt{C_1}\gamma}$ and $H:=\widehat{C}\|\boldsymbol{\beta}\|_{\infty,\O}^2+\displaystyle\frac{(2+\sqrt{2})^2}{\kappa\dfrac{(2+\sqrt{2})^2}{4}+2\nu\gamma^2}\|\bF\|_{0,\O}$, we have 
\begin{equation*}
\widehat{C}\textrm{R}_0^2+H\leq\textrm{R}_0\Longrightarrow\widehat{C}\textrm{R}_0^2-\textrm{R}_0+H\leq 0,
\end{equation*}
which implies that the radius $\textrm{R}_0$ must satisfy
\begin{equation*}
\displaystyle\frac{1-\sqrt{1-4\widehat{C}H}}{2\widehat{C}}<\textrm{R}_0<\frac{1-\sqrt{1+4\widehat{C}H}}{2\widehat{C}}.
\end{equation*}
Additionally, for $\textrm{R}_0$ to be well defined, the condition $1-4\widehat{C}H> 0$ must hold. Implying
that 
\begin{equation*}
\frac{(2+\sqrt{2})^2}{4\nu\sqrt{C_1}\gamma}\left(\frac{(2+\sqrt{2})^2}{4\nu\sqrt{C_1}\gamma}\|\boldsymbol{\beta}\|_{\infty,\O}+\frac{(2+\sqrt{2})^2}{\kappa\dfrac{(2+\sqrt{2})^2}{4}+2\nu\gamma^2}\|\bF\|_{0,\O} \right)<\frac{1}{4}.
\end{equation*}
Hence, this leads to the fact that $\mathcal{K}_1(M_{\textrm{R}_0})\subset M_{\textrm{R}_0}$.

Now we prove that  $\mathcal{K}_1$ is a contraction.
\begin{lemma}\label{lmm:cotaj}
There exists a positive constant $L$, depending only on data, such that 
$$\|\mathcal{K}_1(\bw_1)-\mathcal{K}_1(\bw_2)\|_{0,\O}\leq L\|\bw_1-\bw_2\|_{0,\O},\qquad \forall\bw_1,\bw_2\in M_{R_0}.$$ 
\end{lemma}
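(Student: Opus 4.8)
The plan is to exploit the linearity of problem \eqref{def:oseen_system_source_00} in its data together with the a priori bound \eqref{eq:bound_u}. First I would fix $\bw_1,\bw_2\in M_{\textrm{R}_0}$ and write $(\bsig_i,\bu_i)$ for the solution of \eqref{def:oseen_system_source_00} associated with $\bw_i$, so that $\bu_i=\mathcal{K}_1(\bw_i)$. Subtracting the two instances of the system and setting $\delta\bsig:=\bsig_1-\bsig_2$ and $\delta\bu:=\bu_1-\bu_2$, the pair $(\delta\bsig,\delta\bu)$ solves a problem of exactly the same (perturbed) saddle-point type; the key observation is that the source functional $F$ does not depend on $\bw$ and therefore cancels, so that the only surviving datum is $\mathcal{G}_{\bw_1,\betab}-\mathcal{G}_{\bw_2,\betab}=\mathcal{G}_{\bw_1-\bw_2,\betab}$, the last equality being the linearity of the map $\bw\mapsto\mathcal{G}_{\bw,\betab}$.

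Second, I would apply the data-dependence estimate \eqref{eq:bound_u} to this homogenized difference problem, in which the role of $F$ is played by the zero functional. This yields directly
\[
\|\delta\bu\|_{0,\O}\leq \frac{2+\sqrt{2}}{\sqrt{C_1}\,\gamma}\,\|\mathcal{G}_{\bw_1-\bw_2,\betab}\|_{\mathbb{H}(\bdiv,\O)'}.
\]
Third, I would insert the boundedness of $\mathcal{G}$ already recorded before the lemma, namely $\|\mathcal{G}_{\bw_1-\bw_2,\betab}\|_{\mathbb{H}(\bdiv,\O)'}\leq \tfrac{2+\sqrt{2}}{2\nu}\|\bw_1-\bw_2\|_{0,\O}\|\betab\|_{\infty,\O}$, and combine the two inequalities to obtain the asserted Lipschitz bound with
\[
L:=\frac{(2+\sqrt{2})^2}{2\nu\sqrt{C_1}\,\gamma}\,\|\betab\|_{\infty,\O},
\]
which indeed depends only on the data $\nu,\gamma,C_1$ and $\|\betab\|_{\infty,\O}$.

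I do not anticipate a genuine obstacle, since the argument reduces to the cancellation of $F$ followed by a single use of \eqref{eq:bound_u}; the only point requiring care is to verify that the homogenized difference problem still satisfies the hypotheses behind \eqref{eq:bound_u}, that is, the $\mathbb{K}$-coercivity of $a(\cdot,\cdot)$, the inf-sup condition \eqref{eq:inf-sup_cont} for $b(\cdot,\cdot)$, and the positivity \eqref{eq:c_coercive} of the perturbation $d(\cdot,\cdot)$, so that \cite[Theorem 4.3.2]{MR3097958} applies with vanishing right-hand side in the second equation. Finally, since $L$ is proportional to $\|\betab\|_{\infty,\O}/\nu$, the smallness-of-data regime already invoked to define $M_{\textrm{R}_0}$ can be tightened, if necessary, to enforce $L<1$, so that $\mathcal{K}_1$ is not merely Lipschitz continuous but a contraction, as required by the subsequent Banach fixed point argument.
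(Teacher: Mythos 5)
Your argument is correct and reaches the same conclusion, but by a genuinely different route. The paper does not invoke the stability estimate \eqref{eq:bound_u} as a black box on the difference problem; instead it re-derives the needed bound by hand: after subtracting the two systems (its \eqref{eq:contraction1}), it first controls $\|\bsig_1-\bsig_2\|_{\bdiv,\O}$ by testing with $\btau=\bsig_1-\bsig_2$ and a multiple of $\bdiv(\bsig_1-\bsig_2)$ together with Lemma~\ref{lmm:cota}, and only then bounds $\|\bu_1-\bu_2\|_{0,\O}$ through the inf-sup condition \eqref{eq:inf-sup_cont} applied to the first equation. Your shortcut --- linearity of $\bw\mapsto\mathcal{G}_{\bw,\betab}$, cancellation of $F$, and a single application of \eqref{eq:bound_u} with vanishing second datum --- is cleaner and yields the more compact constant $L=\frac{(2+\sqrt{2})^2}{2\nu\sqrt{C_1}\,\gamma}\|\betab\|_{\infty,\O}$, at the price of leaning entirely on the correctness of the constants in \eqref{eq:bound_u}, which the paper quotes from \cite[Theorem 4.3.2]{MR3097958} without derivation; the paper's longer computation is self-contained on this point and produces the different (but likewise data-dependent) constant $L=\gamma^{-1}L_1$. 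One caveat you should make explicit: your claim that the second equation of the difference problem is homogeneous requires reading \eqref{def:oseen_system_source_00} with $d(\bv,\bu)$ (the unknown) rather than $d(\bv,\bw)$ (the lagged datum) as literally written there; under the literal reading the second equation of the difference problem carries the datum $d(\cdot,\bw_1-\bw_2)$, and $L$ acquires an extra, still admissible, term. Since the paper's own proof (see \eqref{eq:contraction1}) works with $d(\bv,\bu_i)$, your reading is the intended one, so this is a point of interpretation rather than a gap.
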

\begin{proof}
Let $\bw_1,\bw_2\in M_{R_0}$, such that $\mathcal{K}(\bw_1):=(\bu_1,\bsig_1)$ and  $\mathcal{K}(\bw_2):=(\bu_2,\bsig_2)$ are solutions to the following respective problems:
$$
	\left\{
	\begin{array}{rcll}
a(\bsig_1,\btau)+ b(\btau,\bu_1)&=&-\displaystyle\frac{1}{\nu}\int_{\O}(\boldsymbol{w}_1\otimes\boldsymbol{\beta})^{\texttt{d}}:\btau&\forall \btau\in \mathbb{H}_0,\\
 b(\bsig_1, \bv)-d(\bv,\bu_1)&=&-(\boldsymbol{f},\bv)_{0,\O}&\forall \bv\in \mathbf{Q},
\end{array}
	\right.
$$
$$
	\left\{
	\begin{array}{rcll}
a(\bsig_2,\btau)+ b(\btau,\bu_2)&=&-\displaystyle\frac{1}{\nu}\int_{\O}(\boldsymbol{w}_2\otimes\boldsymbol{\beta})^{\texttt{d}}:\btau&\forall \btau\in \mathbb{H}_0,\\
 b(\bsig_2, \bv)-d(\bv,\bu_2)&=&-(\boldsymbol{f},\bv)_{0,\O}&\forall \bv\in \mathbf{Q}.
\end{array}
	\right.
$$
Subtracting these problems we obtain
%\begin{footnotesize}
%\begin{equation}
%\label{eq:contraction1}
%	\left\{
%	\begin{array}{rcll}
%a(\bsig_1-\bsig_2,\btau)+ b(\btau,\bu_1-\bu_2)&=&-\displaystyle\frac{1}{\nu}\int_{\O}(\boldsymbol{w}_1\otimes\boldsymbol{\beta})^{\texttt{d}}:\btau+\displaystyle\frac{1}{\nu}\int_{\O}(\boldsymbol{w}_2\otimes\boldsymbol{\beta})^{\texttt{d}}:\btau&\forall \btau\in \mathbb{H}_0,\\
% b(\bsig_1-\bsig_2, \bv)-d(\bv,\bu_1-\bu_2)&=&0&\forall \bv\in \mathbf{Q}.
%\end{array}
%	\right.
%\end{equation}
%\end{footnotesize}
\begin{equation}
\label{eq:contraction1}
\left\{\begin{aligned}
a(\bsig_1-\bsig_2,\btau)+ b(\btau,\bu_1-\bu_2)
  &= -\frac{1}{\nu}\int_{\O}(\boldsymbol{w}_1\otimes\boldsymbol{\beta})^{\texttt{d}}:\btau
     +\frac{1}{\nu}\int_{\O}(\boldsymbol{w}_2\otimes\boldsymbol{\beta})^{\texttt{d}}:\btau,
\\
&\qquad\hspace{5 cm} \forall \btau\in \mathbb{H}_0,
\\
b(\bsig_1-\bsig_2, \bv)-d(\bv,\bu_1-\bu_2)
  &= 0 \qquad\forall \bv\in \mathbf{Q}.
\end{aligned}\right.
\end{equation}
Now, testing \eqref{eq:contraction1} with $\btau=\bsig_1-\bsig_2$ and  $\bv=-\kappa\bdiv(\bsig_1-\bsig_2)$ and adding both equations we have 
\begin{multline*}
a(\bsig_1-\bsig_2,\bsig_1-\bsig_2)+b(\bsig_1-\bsig_2,\kappa\bdiv(\bsig_1-\bsig_2))\\
=-\displaystyle\frac{1}{\nu}\int_{\O}(\boldsymbol{w}_1\otimes\boldsymbol{\beta})^{\texttt{d}}:\btau+\displaystyle\frac{1}{\nu}\int_{\O}(\boldsymbol{w}_2\otimes\boldsymbol{\beta})^{\texttt{d}}:\btau,
\end{multline*}
and hence
\begin{equation*}
\frac{1}{\nu}\|(\bsig_1-\bsig_2)^{\texttt{d}}\|_{0,\O}^2+\kappa\|\bdiv(\bsig_1-\bsig_2)\|_{0,\O}^2\leq \frac{2+\sqrt{2}}{2\nu}\|\bw_1-\bw_2\|_{0,\O}\|\bsig_1-\bsig_2\|_{\bdiv,\O}\|\boldsymbol{\beta}\|_{\infty,\O}.
\end{equation*}
Now, invoking Lemma \ref{lmm:cota}, together with elementary algebraic manipulations,  yield 
\begin{equation*}
\frac{1}{2}\min\left\{\frac{1}{\nu},\kappa\right\}\min\{C_1,1\}\|\bsig_1-\bsig_2\|_{\bdiv,\O}\leq\frac{(2+\sqrt{2})\|\boldsymbol{\beta}\|_{\infty,\O}}{2\nu}\|\bw_1-\bw_2\|_{0,\O},
\end{equation*}
which implies 
\begin{equation}
\label{eq:contraction2}
\|\bsig_1-\bsig_2\|_{\bdiv,\O}\leq\frac{(2+\sqrt{2})\|\boldsymbol{\beta}\|_{\infty,\O}}{\nu}\left(\min\left\{\nu^{-1},\kappa\right\}\min\{C_1,1\}\right)^{-1} \|\bw_1-\bw_2\|_{0,\O}.
\end{equation}

On the other hand, invoking the  inf-sup condition \eqref{eq:inf-sup_cont} and \eqref{eq:contraction2} we have
\begin{align*}
\gamma\|\mathcal{K}_1(\bw_1)-\mathcal{K}_1(\bw_2)\|_{0,\O}&=\gamma\|\bu_1-\bu_2\|_{0,\O}\leq \sup_{\btau\in\mathbb{H}_0}\frac{b(\btau,\bu_1-\bu_2)}{\|\btau_h\|_{\bdiv,\O}}\\
&\leq \sup_{\btau\in\mathbb{H}_0}\frac{\mathcal{G}_{\boldsymbol{\bw_1},\boldsymbol{\beta}}(\btau)-\mathcal{G}_{\boldsymbol{\bw_2},\boldsymbol{\beta}}(\btau)-a(\bsig_1-\bsig_2,\btau)}{\|\btau_h\|_{\bdiv,\O}}\\
	&\leq \sup_{\btau\in\mathbb{H}_0}\frac{\mathcal{G}_{\boldsymbol{\bw_1},\boldsymbol{\beta}}(\btau)-\mathcal{G}_{\boldsymbol{\bw_2},\boldsymbol{\beta}}(\btau)}{\|\btau_h\|_{\bdiv,\O}}+\sup_{\btau\in\mathbb{H}_0}\frac{a(\bsig_1-\bsig_2,\btau)}{\|\btau_h\|_{\bdiv,\O}}\\ 
	&\leq \dfrac{2+\sqrt{2}}{2\nu}\|\bw_1-\bw_2\|_{0,\O}\|\boldsymbol{\beta}\|_{\infty,\O}\\
	&+\frac{1}{\nu}\left(\frac{(2+\sqrt{2})^2}{4}\right)\|\bsig_1-\bsig_2\|_{\bdiv,\O}\leq L_1\|\bw_1-\bw_2\|_{0,\O},
%&\leq  \dfrac{1}{\nu}\|\bw_1-\bw_2\|_{0,\O}\|\boldsymbol{\beta}\|_{\infty,\O}+ \dfrac{1}{c_1}\|\bw_1-\bw_2\|_{0,\O}\|\boldsymbol{\beta}\|_{\infty,\O} \\
%&\leq \|\boldsymbol{\beta}\|_{\infty,\O}\max\left\{\dfrac{1}{\nu},\dfrac{1}{c_1}\right\}\|\bw_1-\bw_2\|_{0,\O}.
%\dfrac{\|\boldsymbol{\beta}\|_{\infty,\O}}{\nu},\dfrac{\|\boldsymbol{\beta}\|_{\infty,\O}}{c_1}\right\}||\bw_1-\bw_2\|_{0,\O}.
\end{align*}
where 
$$L_1:=\frac{\|\boldsymbol{\beta}\|_{\infty,\O}(2+\sqrt{2})}{2\nu}\left(1+\frac{(2+\sqrt{2})^2}{2\nu}(\min\{\nu^{-1},\kappa\}\min\{C_1,1\})^{-1} \right).$$
Therefore
$$\|\mathcal{K}_1(\bw_1)-\mathcal{K}_1(\bw_2)\|_{0,\O}\leq \frac{L_1}{\gamma}\|\bw_1-\bw_2\|_{0,\O}.$$
Finally, defining 
$L:=\gamma^{-1}L_1$, we conclude the proof.
\end{proof}

Now we are  in position to establish that $\mathcal{J}$ is a contraction.
\begin{theorem}
Given $\boldsymbol{f}\in\mathbf{Q}$, assume  that the data is sufficiently small. Also, assume that 
\begin{equation}
\label{eq_contraction}
\frac{\|\boldsymbol{\beta}\|_{\infty,\O}(2+\sqrt{2})}{2\nu}\left(1+\frac{(2+\sqrt{2})^2}{2\nu}(\min\{\nu^{-1},\kappa\}\min\{C_1,1\})^{-1} \right)< \gamma.
\end{equation}
Then, there exists a unique solution of problem \eqref{eq:variational1_H0}.
\end{theorem}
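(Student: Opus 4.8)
The plan is to realize \eqref{eq:variational1_H0} as a fixed-point equation for the map $\mathcal{K}_1$ and apply the Banach fixed-point theorem on the closed ball $M_{\textrm{R}_0}$. First I would record that $M_{\textrm{R}_0}$ is a nonempty closed subset of the Hilbert space $\mathbf{Q}=\L^2(\O)^2$, hence a complete metric space for the induced distance. Provided the data is small enough that the discriminant condition $1-4\widehat{C}H>0$ holds (so that an admissible radius $\textrm{R}_0$ exists), the self-mapping computation carried out just before the statement shows $\mathcal{K}_1(M_{\textrm{R}_0})\subset M_{\textrm{R}_0}$; this is the first hypothesis of Banach's theorem.

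Next I would invoke Lemma \ref{lmm:cotaj}, which gives that $\mathcal{K}_1$ is Lipschitz on $M_{\textrm{R}_0}$ with constant $L=\gamma^{-1}L_1$, where $L_1$ is the constant displayed there. The key observation is that the standing hypothesis \eqref{eq_contraction} is exactly the inequality $L_1<\gamma$, i.e. $L<1$, so $\mathcal{K}_1$ is a genuine contraction. Banach's theorem then yields a unique $\bu\in M_{\textrm{R}_0}$ with $\mathcal{K}_1(\bu)=\bu$.

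It remains to unwind the fixed point into a solution of \eqref{eq:variational1_H0}. Setting $\bsig:=\mathcal{K}_2(\bu)$, the pair $(\bsig,\bu)=\mathcal{K}(\bu)$ solves the source problem \eqref{def:oseen_system_source_00} with $\bw=\bu$. Since $\mathcal{G}_{\bu,\boldsymbol{\beta}}(\btau)=-c(\bu,\btau)$ and the term $d(\bv,\bw)$ becomes $d(\bv,\bu)$, substituting $\bw=\bu$ turns \eqref{def:oseen_system_source_00} into
\begin{equation*}
a(\bsig,\btau)+b(\btau,\bu)+c(\bu,\btau)=0,\qquad b(\bsig,\bv)-d(\bv,\bu)=F(\bv),
\end{equation*}
which is precisely \eqref{eq:variational1_H0}; hence $(\bsig,\bu)$ is a solution.

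For uniqueness I would note that the estimate proved in Lemma \ref{lmm:cotaj} does not actually use membership in the ball for the difference $\bw_1-\bw_2$, so the Lipschitz bound with constant $L<1$ in fact holds on all of $\mathbf{Q}$. Consequently, if $(\bsig_1,\bu_1)$ and $(\bsig_2,\bu_2)$ are any two solutions of \eqref{eq:variational1_H0}, then each $\bu_i$ is a fixed point of $\mathcal{K}_1$ and $\|\bu_1-\bu_2\|_{0,\O}=\|\mathcal{K}_1(\bu_1)-\mathcal{K}_1(\bu_2)\|_{0,\O}\le L\|\bu_1-\bu_2\|_{0,\O}$ forces $\bu_1=\bu_2$; the unique solvability of the source problem \eqref{def:oseen_system_source_00} (guaranteed by \cite[Theorem 4.3.2]{MR3097958}) then gives $\bsig_1=\bsig_2$. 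The main point to watch is the interplay of the two smallness requirements: the self-mapping property relies on the data-smallness that makes $\textrm{R}_0$ well defined, whereas the contraction relies on \eqref{eq_contraction}; both must be in force, and one should verify that they are mutually compatible rather than conflicting.
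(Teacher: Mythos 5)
Your proposal is correct and follows essentially the same route as the paper, whose proof is a one-line invocation of the Banach fixed-point theorem combining the self-mapping property $\mathcal{K}_1(M_{\textrm{R}_0})\subset M_{\textrm{R}_0}$, Lemma \ref{lmm:cotaj}, and the observation that \eqref{eq_contraction} is exactly $L=\gamma^{-1}L_1<1$. You merely supply details the paper leaves implicit (completeness of the ball, unwinding the fixed point into a solution of \eqref{eq:variational1_H0}, and the remark that the Lipschitz bound is global so uniqueness is not confined to the ball), all of which are consistent with the paper's argument.
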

\begin{proof}
The result is a direct consequence of the well definition of $\mathcal{J}$  together with  $\mathcal{J}_1(M_{R_0})\subset M_{R_0}$, Lemma \ref{lmm:cotaj}, and the fact that \eqref{eq_contraction} gives that $\mathcal{J}$ is a contraction mapping.
\end{proof}
%%%%%%%%%%%%%%%%%%%%%%%%%%%%%%%%%%%%%%%%%%%%%%%%%%%%%%%%%%%%%
%%%%%%%%%%%%%%%%%%%%%%%%%%%%%%%%%%%%%%%%%%%%%%%%%%%%%%%%%%%%%
%%%%%%%%%%%%%%%%%%%%%%%%%%%%%%%%%%%%%%%%%%%%%%%%%%%%%%%%%%%%%
%%%%%%%%%%%%%%%%%%%%%%%%%%%%%%%%%%%%%%%%%%%%%%%%%%%%%%%%%%%%%
%%%%%%%%%%%%%%%%%%%%%%%%%%%%%%%%%%%%%%%%%%%%%%%%%%%%%%%%%%%%%
%%%%%%%%%%%%%%%%%%%%%%%%%%%%%%%%%%%%%%%%%%%%%%%%%%%%%%%%%%%%%
%%__________________________________________________________________________________
%%      ######################### SECTION ##########################################
%%__________________________________________________________________________________
\section{The Virtual Element Approximation}
\label{secVEM}

The aim of this section is to propose and analyze a mixed virtual element method to approximate the solution of problem \eqref{eq:variational1_H0}. To this end, we first introduce the assumptions and definitions required 
to work within the virtual element framework~~\cite{MR2997471,MR4586821,MR3264352}.

We begin by describing the geometrical properties to operate under the VEM approach.
Let $\{\mathcal{T}_h(\O)\}_{h>0}$ be a sequence of decompositions of $\O$ into non-overlapping 
polygonal elements $\E$. We suppose that for each $h>0$, the tessellation $\mathcal{T}_h(\O)$ is built according with the procedure described below.

The polygonal domain $\Omega$ is partitioned
into a polygonal mesh $\mathcal{T}_h$ that is \emph{regular},
in the sense that there exist positive constants 
$ c, \eta$ such that
\begin{enumerate}
\item each edge $e \in \partial \E$ has a length $h_e \ge c \: h_E$,
where $h_E$ denotes the diameter of $E$; 
\item each polygon $E$ in the mesh is star-shaped with respect to a ball of radius $\eta h_E$.
\end{enumerate}

\subsection{Virtual element spaces and their degrees of freedom}

In what follows we will introduce the  virtual element spaces to approximate the pseudostress $\bsig$ and the velocity  $\bu$ of problem~\eqref{eq:variational1_H0}.  Let us summarize the  construction provided in  \cite[Subsection 3.2]{MR3629152} of the VEM spaces that we require. For each integer $k\geq 0$ and for each $E\in\mathcal{T}_h$, we introduce the following local virtual element space of order $k$:
\begin{multline}
\nonumber \mathbf{W}_h^E:=\{ \btau:=(\tau_1,\tau_2)^{\texttt{t}}\in\H(\div;E)\cap\H(\rot;E):\,\btau\cdot\boldsymbol{n}|_e\in \textsc{P}_k(e)\quad\forall e\subset\partial E,\\
 \quad\div\btau\in \textsc{P}_{k}(E), \quad\text{rot}\,\btau\in \textsc{P}_{k-1}(E) \},
\end{multline}
where $\text{rot}\,\btau:=\dfrac{\partial\tau_2}{\partial x_1}-\dfrac{\partial\tau_1}{\partial x_2}$ and $\textsc{P}_{-1}(E)=\{0\}$. 

In addition, given a  geometric object $\mathcal{O}$ of dimension $d\in\{1, 2\}$, as an edge or an element, with barycenter $x_{\mathcal{O}}$ and diameter  $h_{\mathcal{O}}$, we consider the following set of normalized monomials on $\mathcal{O}$ (of dimension $k+1$ for $d=1$ and $(k+1)(k+2)/2$ for $d=2$)
\begin{equation*}
\label{eq:monomio}
\mathcal{M}_{k}(\mathcal{O}):=\left\{ q \;\ \Big| \;\ q:=\left(\dfrac{\mathbf{x}-\mathbf{x}_{\mathcal{O}}}{h_{\mathcal{O}}}\right)^{\boldsymbol{\alpha}} \text{ for } \alpha\in \mathbb{N}^{d} \text{ with } |\boldsymbol{\alpha}|\leq k\right\},
\end{equation*}
where, for a multi-index $\boldsymbol{\alpha}:=(\alpha_{1},...,\alpha_{d})$, we denote, as usual, $\boldsymbol{\alpha}:=\alpha_{1}+...+\alpha_{d}$ and $\mathbf{x}^{\boldsymbol{\alpha}}:=x_{1}^{\alpha_{1}}...x_{d}^{\alpha_{d}}$. It is easy to
check that $\mathcal{M}_{k}(\mathcal{O})$ is a  basis of $\textsc{P}_k(\mathcal{O})$ (see \cite{MR2997471} for more details). 

Now, given $\btau\in \mathbf{W}_h^E$ we define the following degrees of freedom
\begin{align}
\ds\int_e\btau\cdot\boldsymbol{n} q\qquad&\forall q\in\mathcal{M}_k(e)\quad\forall\,\text{edge}\,\,e\in\mathcal{T}_h,\label{eq:dof_normal0}\\
\int_E\btau\cdot\nabla q\qquad&\forall q\in\mathcal{M}_{k}(E)\backslash\{1\}\quad\forall E\in\mathcal{T}_h,\label{eq:dof_grad0}\\
\int_E\btau\cdot\boldsymbol{q}\qquad&\forall\boldsymbol{q}\in\mathcal{H}_{k}^{\bot}(E)\quad\forall E\in\mathcal{T}_h,\label{eq:dof_rot0} 
\end{align}
where $\mathcal{H}_k^{\bot}$ is a basis for $(\nabla\textsc{P}_{k+1}(E))^{\bot|_{\textbf{\textsc{P}}_k(E)}}\cap\textbf{\textsc{P}}_k(E)$, which is the $\textbf{\textsc{L}}^2$-orthogonal of $\nabla\textsc{P}_{k+1}(E)$ in $\textbf{\textsc{P}}_k(E)$. A complete description of the details and properties of these spaces can be found in \cite[Subsection 3.2]{MR3629152}. 

%On the other hand,  the amount of local degrees offreedom defined in \eqref{eq:dof_normal1}--\eqref{eq:dof_q} is given by
%\begin{equation*}
%n_k^E:=(k+1)\#(\mathcal{E}_h)+\left\{\frac{(k+1)(k+2}{2}-1 \right\}+\frac{k(k+1)}{2}=(k+1)(\#(\mathcal{E}_h)+k+1)-1.
%\end{equation*}}

We  now  introduce   for each $E\in\mathcal{T}_h$ the tensorial local virtual element space
\begin{equation}
\label{eq:global_space}
\mathbb{H}_{h}^{E}:=\{\btau\in\mathbb{H}(\bdiv;E)\cap\mathbb{H}(\brot;E) : (\tau_{i1},\tau_{i2})^{t}\in \mathbf{W}_{h}^{E}\quad \forall i\in\{1,2\}\},
\end{equation}
which is unisolvent with respect to the following degrees of freedom:
\begin{align}
\ds\int_e\btau\boldsymbol{n}\cdot\boldsymbol{q}\qquad&\forall\boldsymbol{q}\in\boldsymbol{\mathcal{M}}_k(e)\quad\forall\,\text{edge}\,\,e\in\mathcal{T}_h,\label{eq:dof_normal}\\
\int_E\btau:\nabla\boldsymbol{q}\qquad&\forall\boldsymbol{q}\in\boldsymbol{\mathcal{M}}_{k}(E)\backslash\{(1,0)^{t},(0,1)^{t}\}\quad\forall E\in\mathcal{T}_h,\label{eq:dof_grad}\\
\int_E\btau :\boldsymbol{\rho}\qquad&\forall\boldsymbol{\rho}\in\boldsymbol{\mathcal{H}}_{k}^{\bot}(E)\quad\forall E\in\mathcal{T}_h,\label{eq:dof_rot}
\end{align}
where 
\begin{align*}
\boldsymbol{\mathcal{M}}_k(\mathcal{O}):=\left\{(q,0)^{t}: q\in\mathcal{M}_{k}(\mathcal{O})\right\}\cup \left\{(0,q)^{t}:q\in\mathcal{M}_{k}(\mathcal{O}) \right\}, \\
\end{align*}
and 
\begin{equation*}
\ds \boldsymbol{\mathcal{H}}_k^{\bot}:=\left\{\begin{pmatrix}\mathbf{q}\\\boldsymbol{0}\end{pmatrix} \,:\mathbf{q}\in\mathcal{H}_k^{\bot}(E)\right\}\cup\left\{\begin{pmatrix}\boldsymbol{0}\\\mathbf{q}\end{pmatrix} \,:\mathbf{q}\in\mathcal{H}_k^{\bot}(E)\right\}.
\end{equation*}
Finally, for every decomposition $ \mathcal{T}_h$ of $\O$ into simple polygons  $E$, we define  the global virtual element space
$$\mathbb{H}_h:=\{\btau_h\in \mathbb{H}: \btau_h|_E\in \mathbb{H}_h^E\,\, \text{for all}\,\, E\in \mathcal{T}_h\}.$$

Finally, we now define the following global virtual element space that allows us to discretize $\mathbb{H}_0$
$$\mathbb{H}_{0,h}:=\left\{\btau_h\in \mathbb{H}_h: \int_\O\tr(\btau_h)=0\right\}.$$
%Observe that $\displaystyle\int_\O\tr(\btau)$ is computable from the degrees of freedom. \FL{Indeed: for $\bx\in\textbf{\textsc{P}}_{k}(E)$ we have}
%\begin{multline*}\int_\O\tr(\btau)=\sum_{E\in\CT_h}\int_\E\tr(\btau)=\sum_{E\in\CT_h}\int_\E\btau: \mathbb{I}=\sum_{E\in\CT_h}\int_\E\btau: \nabla \bx\\
%=\sum_{E\in\CT_h}\left(-\int_\E\bdiv\btau\cdot \bx+\int_{\partial E}\btau\bn\cdot \bx\right),
%\end{multline*}
 
\subsection{Polynomial projections and discrete forms}
This section is devoted to the construction of several polynomial projections that play a key role in defining the discrete bilinear forms used to approximate the continuous one.

Let us begin by recalling two classical $\L^2$ projections. First, we consider the projector
$\mathcal{P}_{k}^h:\textbf{L}^2(\O)\rightarrow\textbf{\textsc{P}}_{k}(\mathcal{T}_h)$,
 which is defined for all $\btau\in \textbf{L}^2(\O)$ by
\begin{equation*}
	\ds\int_E\mathcal{P}_{k}^h(\btau)\cdot\boldsymbol{q}=\int_E\btau\cdot\boldsymbol{q}\quad\forall E\in\mathcal{T}_h, \quad\forall\boldsymbol{q}\in\textbf{\textsc{P}}_{k}(E).
\end{equation*}

Next, with the aim of introducing another important projection, we decompose the bilinear form $a(\cdot,\cdot)$ as follows:
\begin{align*}
	\displaystyle a(\bsig,\btau)=\sum_{E\in\mathcal{T}_h}a^{\E}(\bsig,\btau)&:=\sum_{\E\in\mathcal{T}_h}\int_{E}\bsig^{\td}:\btau^{\td} \quad \forall \bsig,\btau \in \mathbb{H}(\bdiv;\O).
\end{align*}

Thus, inspired by the research~\cite[Subection 3.3]{zbMATH06969484}, for each $E\in\mathcal{T}_h$ we consider $\Pi_h^E:=\mathbb{L}^2(E)\rightarrow\mathbb{P}_k(E)$ be the $\mathbb{L}^2(E)$-orthogonal projector, which satisfies the following properties
\begin{itemize}
\item[(P1)] $ a^{\E}\big(\Pi_h^E\btau,\widehat{\Pi}_h^{\E}\boldsymbol{\rho} \big)=a^{\E}\big(\Pi_h^E\btau,\boldsymbol{\rho} \big)$, for all $\btau,\boldsymbol{\rho}\in\mathbb{L}^2(E)$%\mathbb{H}(\bdiv;E)$;
\item[(P2)] for any element $E\in\CT_h$, there holds
	\begin{equation*}
		\|\Pi_h^E(\btau)\|_{0,E}\leq\|\btau\|_{0,E}\quad\forall\btau\in\mathbb{L}^2(E),
	\end{equation*}
\item[(P3)] given an integer $0\leq m\leq k+1$, there exists a positive constant $C$, independent of $E$, such that 
	\begin{equation}\label{eq_aproxpro}
		\| \btau-\Pi_h^E\btau\|_{0,E}\leq C h_E^m|\btau|_{m,E}  \quad \forall \btau\in\mathbb{H}^m(E).
	\end{equation}
\end{itemize}

It is well known that the projection operators $\mathcal{P}_{k}^h$ and $\Pi_h^E \boldsymbol{\tau}$ are fully computable by means of  the degrees of freedom \eqref{eq:dof_normal}-\eqref{eq:dof_rot}.

On the other hand, let $S^{\E}(\cdot,\cdot)$ be any symmetric positive definite bilinear form satisfying 
\begin{equation}
\label{eq:s_stable}
	\dfrac{c_0}{\nu}\int_E\btau_h:\btau_h\leq S^{\E}(\btau_h,\btau_h)\leq \dfrac{c_1}{\nu}\int_E\btau_h:\btau_h\quad\forall\btau_h\in\mathbb{H}_h^E,
\end{equation}
where $c_0$ and $c_1$ are positive constants depending on the mesh assumptions.

Thus, for each element we define the bilinear form  approximating $a^{\E}(\cdot,\cdot)$
\begin{equation}
 a_h^{\E}(\bsig_h,\btau_h):=a^{\E}(\Pi_h^E\bsig_h,\Pi_h^E\btau_h)+ S^E\left(\bsig_h-\Pi_h^E
\bsig_h,\btau_h-\Pi_h^E \btau_h\right).
\end{equation}

The construction of $a_h^E(\cdot,\cdot,)$ guarantees the usual consistency property of the VEM. i.e, for all $E\in \CT_h$ and $\btau_h\in\mathbb{L}^2(E)$ 
\begin{itemize}
\item consistency: 
$$a_h^E(\btau_h,\boldsymbol{q}_k)=a^E(\btau_h,\boldsymbol{q}_k),\qquad \forall\boldsymbol{q_k}\in\mathbb{P}_k(E), $$
which follows from the projector property and (P1).
\item Additionally, we have the following estimate:
$$a_h^E(\btau_h,\btau_h)\leq \dfrac{1}{\nu}\max\left\{\left(\dfrac{\sqrt{2}+2}{2}\right)^2,c_1\right\}(\|\btau_h\|_{0,E}^2+\|\btau_h-\Pi_h^E\btau_h\|_{0,E}^2),$$ 
which follows from \eqref{eq:s_stable} and  (P2).
\end{itemize}
As a consequence of the previous property, it is easy to check that this bilinear form is bounded. Let $\bsig_h,\btau_h\in\mathbb{H}_h$ we have

\begin{align*}
|a_h^E(\bsig_h,\btau_h)|&\leq \dfrac{1}{\nu}\max\left\{\left(\dfrac{\sqrt{2}+2}{2}\right)^2,c_1\right\}(\|\bsig_h\|_{0,\O}^2+\|\bsig_h-\Pi_h^E\bsig_h\|_{0,E}^2)^{1/2}\\
&\hspace{1.5 cm}\times(\|\btau_h\|_{0,E}^2+\|\btau_h-\Pi_h^E\btau_h\|_{0,E}^2)^{1/2}\\
&\leq \dfrac{C}{\nu}\max\left\{\left(\dfrac{\sqrt{2}+2}{2}\right)^2,c_1\right\}\|\bsig_h\|_{0,E}\|\btau_h\|_{0,E}\\
&\leq \dfrac{C}{\nu}\max\left\{\left(\dfrac{\sqrt{2}+2}{2}\right)^2,c_1\right\}\|\bsig_h\|_{\bdiv,E}\|\btau_h\|_{\bdiv,E}.
%\leq|a^E(\Pi_h^E\bsig_h,\Pi_h^E\btau_h)|+|S^E(\bsig_h-\Pi_h^E
%\bsig_h,\btau_h-\Pi_h^E \btau_h)|\\
%&\leq\frac{1}{\nu}\left\|\left(\Pi_h^E\bsig_h\right)^{\texttt{d}}\right\|_{0,E}\left\|\left(\Pi_h^E\btau_h\right)^{\texttt{d}}\right\|_{0,E}\\
%&+c_1\|\bsig_h-\Pi_h^E\bsig_h\|_{0,E}\|\btau_h-\Pi_h^E\btau_h\|_{0,E}\\
%&\leq \frac{1}{\nu}\left(\frac{2+\sqrt{2}}{2}\right)^2\|\Pi_h^E\bsig_h\|_{0,E}\|\Pi_h^E\btau_h\|_{0,E}\\
%&+c_1\|\bsig_h-\Pi_h^E\bsig_h\|_{0,E}\|\btau_h-\Pi_h^E\btau_h\|_{0,E}\\
%&\leq\max\left\{\frac{\widehat{C}^2}{\nu}\left(\frac{2+\sqrt{2}}{2}\right)^2,c_1(1+\widehat{C})\right\}
%\|\bsig_h\|_{\bdiv,E}\|\btau_h\|_{\bdiv,E}
\end{align*}

%Following the proof of \cite[Lemma 4.1]{MR3629152} we have
%\begin{align*}
%|a_h^E(\bsig_h,\btau_h)|&\leq|a^E(\Pi_h^E\bsig_h,\Pi_h^E\btau_h)|+|S^E(\bsig_h-\Pi_h^E
%\bsig_h,\btau_h-\Pi_h^E \btau_h)|\\
%&\leq\frac{1}{\nu}\left\|\left(\Pi_h^E\bsig_h\right)^{\texttt{d}}\right\|_{0,E}\left\|\left(\Pi_h^E\btau_h\right)^{\texttt{d}}\right\|_{0,E}\\
%&+c_1\|\bsig_h-\Pi_h^E\bsig_h\|_{0,E}\|\btau_h-\Pi_h^E\btau_h\|_{0,E}\\
%&\leq \frac{1}{\nu}\left(\frac{2+\sqrt{2}}{2}\right)^2\|\Pi_h^E\bsig_h\|_{0,E}\|\Pi_h^E\btau_h\|_{0,E}\\
%&+c_1\|\bsig_h-\Pi_h^E\bsig_h\|_{0,E}\|\btau_h-\Pi_h^E\btau_h\|_{0,E}\\
%&\leq\max\left\{\frac{\widehat{C}^2}{\nu}\left(\frac{2+\sqrt{2}}{2}\right)^2,c_1(1+\widehat{C})\right\}
%\|\bsig_h\|_{\bdiv,E}\|\btau_h\|_{\bdiv,E}
%\end{align*}
We also write locally the forms $b(\cdot,\cdot)$,  $c(\cdot,\cdot)$, and $d(\cdot,\cdot)$ as follows
\begin{align*}
b^{\E}(\btau_h,\bv_h)&:=\int_\E\bdiv\btau_h\cdot\bv_h, \\
c^{\E}_h(\bv_h,\btau_h)&:=\int_{\E}(\bv_h\otimes\boldsymbol{\beta})^{\dd}: \Pi_h^E \btau_h,\\
d^E(\bv_h,\bw_h)&:=\int_{\E}\kappa \bv_h\cdot \bw_h\qquad \bv_h,\bw_h\in \mathbf{P}_{k-1}(\mathcal{T}_h).
\end{align*}
For the bilinear forms $b^E(\cdot,\cdot)$ and $d^E(\cdot,\cdot)$ the continuity is straightforward. For $c_h^E(\cdot,\cdot)$ we observe that
\begin{align*}
|c_h^E(\bv_h,\btau_h)|&\leq \frac{1}{\nu}\|(\bv_h\otimes\boldsymbol{\beta})^{\texttt{d}}\|_{0,E}\|\Pi_h^E\btau_h\|_{0,E}\\
%&\leq \frac{1}{\nu}\left(\frac{2+\sqrt{2}}{2}\right)\|\bv_h\otimes\boldsymbol{\beta}\|_{0,E}\|\btau_h\|_{0,E}\\
&\leq \frac{\|\boldsymbol{\beta}\|_{\infty,E}}{\nu}\left(\frac{2+\sqrt{2}}{2}\right)\|\bv_h\|_{0,E}\|\btau_h\|_{\bdiv,E}.
\end{align*}

For the functional $F(\cdot)$,we have $F^{\E}(\bv_h) := ( \bF,  \bv_h )_{0,\E} \quad\bv_h \in \mathbf{Q}_h^\E$.

As usual, the construction of the global bilinear forms is obtained by summing the corresponding elemental contributions over all elements of $\mathcal{T}_h$. For instance,
\begin{equation}
a_h(\bsig_h,\btau_h):= \sum_{E\in\mathcal{T}_h} a_h^E(\bsig_h,\btau_h),\qquad
\forall \bsig_h,\btau_h \in \mathbb{H}_h.
\end{equation}
Analogously, we define the remaining global bilinear forms.

\subsection{The mixed virtual element formulation}
With the spaces described previously at hand,  we are in position to introduce the VEM discretization of problem \eqref{eq:variational1_H0}  which reads as follows:  Find  $(\bsig_h,\bu_h)\in\mathbb{H}_{0,h}\times\mathbf{Q}_h$, such that 
\begin{equation}
	\label{eq:fv:disc}
	\left\{
\begin{array}{rcll}
a_h(\bsig_h,\btau_h)+b(\btau_h,\bu_h)+c_h(\bu_h,\btau_h)&=&0&\forall \btau_h\in \mathbb{H}_{0,h},\\
b(\bsig_h,\bv_h)-d(\bu_h,\bv_h)&=&F(\bv_h)&\forall \bv_h\in \mathbf{Q}_h.
	\end{array}
	\right.
\end{equation}

To recover the discrete pressure, we consider the following equation
\begin{equation}
\label{eq:discrete_pressure}
p_h=-\frac{1}{2}(\tr(\Pi_h\bsig)+\tr(\bu_h\otimes\boldsymbol{\beta}))\quad\text{in}\,\,\O,
\end{equation}
where $\Pi_h$ is given by: $(\Pi_h\btau)|_E:=\Pi_h^E(\btau|_E)$ for all $\btau\in\mathbb{H}$, with $\Pi_h^E$.
Our task now is to establish that \eqref{eq:fv:disc} is well posed. Let us begin by recalling the following inf-sup condition that bilinear form $b(\cdot,\cdot)$ satisfies for the VEM spaces or our paper. According to \cite[Lemma 5.3]{MR3614887}, there exists a constant $\widehat{\gamma}>0$, independent of $h$, such that
\begin{equation}
\label{eq:inf-sup_disc}
\displaystyle\sup_{\btau_h\in\mathbb{H}_{0,h}}\frac{b(\btau_h,\boldsymbol{q}_h)}{\|\btau_h\|_{\bdiv,\O}}\geq\widehat{\gamma}\|\boldsymbol{q}_h\|_{0,\O}\quad\forall \boldsymbol{q}_h\in\mathbf{Q}_h.
\end{equation}
For the forthcoming analysis, and for all $(\bsig_h,\bu_h),(\btau_h,\bv_h)\in\mathbb{H}_{0,h}\times\mathbf{Q}_h$,   it is convenient to define the following bilinear form 
\begin{equation}
\label{eq:A_h}
A_h((\bsig_h,\bu_h),(\btau_h,\bv_h)):=a_h(\bsig_h,\btau_h)+b(\btau_h,\bu_h)+b(\bsig_h,\bv_h)-d(\bu_h,\bv_h).
\end{equation}
Observe that this bilinear form does not depend on the convective term. Also, with the aim of simplify the presentation of the material, we define the following norm
\begin{equation*}
\vertiii{(\btau,\bv)}^2:=\|\btau\|_{\bdiv,\O}^2+\|\bv\|_{0,\O}^2\quad\forall\btau\in\mathbb{H},\,\,\forall\bv\in\mathbf{Q}.
\end{equation*}
The following result is instrumental.
\begin{lemma}
\label{inf-sup_Ah}
For each $(\bsig_h,\bu_h)\in \mathbb{H}_{0,h}\times \mathbf{Q}_h$, there exists $(\btau_h,\bv_h)\in \mathbb{H}_{0,h}\times \mathbf{Q}_h$ with 
$\vertiii{(\btau_h,\bv_h)} \leq C\vertiii{(\bsig_h,\bu_h)},$ and a constant $C_J>0$, depending on $\nu$, $\kappa$, the constant $\widehat{\gamma}$ of \eqref{eq:inf-sup_disc} and  the domain $\O$,  such that
\begin{equation}
\vertiii{(\bsig_h,\bu_h)}^2\leq C_J A_h((\bsig_h,\bu_h),(\btau_h,\bv_h)).
\end{equation}
%\GR{where 
%$$\vertiii{(\btau_h,\bv_h)}^2:=\|\btau_h\|_{\bdiv,\O}^2+\|\bv_h\|_{0,\O}^2.$$}
\end{lemma}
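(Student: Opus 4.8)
The statement is a discrete global inf-sup (ellipticity in the combined norm $\vertiii{\cdot}$) for the unperturbed saddle-point form $A_h$ of \eqref{eq:A_h}, and it is the discrete counterpart of the abstract stability result \cite[Theorem 4.3.2]{MR3097958} already invoked at the continuous level. My plan is therefore to verify the discrete Brezzi-type ingredients and then to exhibit the pair $(\btau_h,\bv_h)$ explicitly as a weighted combination of elementary choices. The ingredients are: a \emph{deviatoric} lower bound for $a_h$ valid on the whole space, the discrete inf-sup \eqref{eq:inf-sup_disc} for $b(\cdot,\cdot)$, and the coercivity \eqref{eq:c_coercive} of $d(\cdot,\cdot)$.

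First I would record the coercivity-type bound for $a_h$. Writing $a_h^E(\btau_h,\btau_h)=a^E(\Pi_h^E\btau_h,\Pi_h^E\btau_h)+S^E(\btau_h-\Pi_h^E\btau_h,\btau_h-\Pi_h^E\btau_h)$ and using the lower bound in \eqref{eq:s_stable} together with the boundedness of the deviator operator, one obtains $\|\btau_h^{\dd}\|_{0,\O}^2\leq C\nu\,a_h(\btau_h,\btau_h)$ for every $\btau_h\in\mathbb{H}_h$; in particular $a_h(\btau_h,\btau_h)\geq 0$. Restricting to the discrete kernel $\mathbb{K}_h:=\{\btau_h\in\mathbb{H}_{0,h}: b(\btau_h,\bv_h)=0\ \forall\bv_h\in\mathbf{Q}_h\}$, where by the construction of $\mathbb{H}_{0,h}$ and $\mathbf{Q}_h$ one has $\bdiv\btau_h=0$ and hence $\btau_h\in\mathbb{K}$, Lemma \ref{lmm:cota} upgrades the estimate to the $\mathbb{K}_h$-coercivity $a_h(\btau_h,\btau_h)\geq\alpha\|\btau_h\|_{\bdiv,\O}^2$. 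These are exactly the hypotheses under which the discrete version of \cite[Theorem 4.3.2]{MR3097958} applies.

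For the explicit construction I would take the test pair
\begin{equation*}
(\btau_h,\bv_h):=(\bsig_h,\,-\bu_h+\theta\,\bdiv\bsig_h),
\end{equation*}
with a small parameter $\theta>0$ to be fixed (when $\bdiv\bsig_h\notin\mathbf{Q}_h$ the second term is replaced by the velocity furnished by the inf-sup \eqref{eq:inf-sup_disc}, which is what makes $C_J$ depend on $\widehat\gamma$). A direct computation using \eqref{eq:A_h} cancels the mixed $b$-terms and yields
\begin{equation*}
A_h((\bsig_h,\bu_h),(\btau_h,\bv_h))=a_h(\bsig_h,\bsig_h)+\theta\|\bdiv\bsig_h\|_{0,\O}^2+\kappa\|\bu_h\|_{0,\O}^2-\theta\kappa(\bu_h,\bdiv\bsig_h)_{0,\O}.
\end{equation*}
I would absorb the cross term by Young's inequality and choose $\theta$ small (depending on $\kappa$) so that a positive fraction of $\theta\|\bdiv\bsig_h\|_{0,\O}^2$ and of $\kappa\|\bu_h\|_{0,\O}^2$ survives. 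Combining the resulting lower bound $\gtrsim\|\bsig_h^{\dd}\|_{0,\O}^2+\|\bdiv\bsig_h\|_{0,\O}^2+\|\bu_h\|_{0,\O}^2$ with Lemma \ref{lmm:cota}, which turns $\|\bsig_h^{\dd}\|_{0,\O}^2+\|\bdiv\bsig_h\|_{0,\O}^2$ into control of $\|\bsig_h\|_{0,\O}^2$, gives $A_h((\bsig_h,\bu_h),(\btau_h,\bv_h))\geq C_J^{-1}\vertiii{(\bsig_h,\bu_h)}^2$. The bound $\vertiii{(\btau_h,\bv_h)}\leq C\vertiii{(\bsig_h,\bu_h)}$ is immediate since $\|\bdiv\bsig_h\|_{0,\O}\leq\|\bsig_h\|_{\bdiv,\O}$.

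The main obstacle is that $a_h(\cdot,\cdot)$ controls only the deviatoric (equivalently, the kernel) part of $\bsig_h$, so the divergence and the kernel-complement of $\bsig_h$ must be recovered from the coupling terms; this forces the weighted combination above and a careful tuning of $\theta$ (and, in the general case, of the inf-sup weight) against $\nu$, $\kappa$ and $\widehat\gamma$ so that every contribution keeps the correct sign. A secondary technical point is the reconciliation of the VEM projector $\Pi_h^E$ appearing in $a_h$ with the continuous deviator estimate of Lemma \ref{lmm:cota}, which is handled through the stabilization bound \eqref{eq:s_stable} as in the first step.
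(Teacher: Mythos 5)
Your construction is correct, but it recovers control of the full $\|\bsig_h\|_{\bdiv,\O}$-norm by a genuinely different mechanism than the paper. The common part is the test pair $(\bsig_h,-\bu_h+\theta\,\bdiv\bsig_h)$ with the Young-inequality tuning of $\theta$ against $\kappa$; this is exactly the first step of the paper's proof. Where you diverge: you keep the term $a_h(\bsig_h,\bsig_h)$, extract from it (via the lower bound in \eqref{eq:s_stable} and a triangle inequality through $\Pi_h^E$) an estimate of the form $\|\bsig_h^{\dd}\|_{0,\O}^2\leq C\nu\,a_h(\bsig_h,\bsig_h)$, and then invoke Lemma \ref{lmm:cota} to convert $\|\bsig_h^{\dd}\|_{0,\O}^2+\|\bdiv\bsig_h\|_{0,\O}^2$ into control of $\|\bsig_h\|_{\bdiv,\O}^2$; the discrete inf-sup \eqref{eq:inf-sup_disc} is then not needed at all, since $\bdiv\bsig_h\in\mathbf{Q}_h$ for these spaces (as the paper itself implicitly uses when testing with $\alpha\bdiv\bsig_h$). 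The paper instead discards $a_h(\bsig_h,\bsig_h)$ as merely nonnegative, introduces via \eqref{eq:inf-sup_disc} an auxiliary $\widetilde{\bsig}_h$ with $\bdiv\widetilde{\bsig}_h=-\bu_h$, and tests with the three-term combination $(\bsig_h-\delta\widetilde{\bsig}_h,\,-\bu_h+\alpha\bdiv\bsig_h)$, never invoking Lemma \ref{lmm:cota}. Your route is arguably cleaner on one point: in the paper's first chain the passage from a positive multiple of $\|\bdiv\bsig_h\|_{0,\O}^2$ to the same multiple of $\|\bsig_h\|_{\bdiv,\O}^2$ goes the wrong way as an inequality and only becomes legitimate once the deviatoric contribution of $a_h$ is retained and Lemma \ref{lmm:cota} is applied --- which is precisely what you do. The trade-off is that your constant $C_J$ then depends on $\nu$ and on the constant $C_1$ of Lemma \ref{lmm:cota} (consistent with the dependence on $\nu$ and $\O$ announced in the statement), whereas the paper's final $C_J$ is expressed through $\kappa$, $\widehat{\gamma}$ and the continuity constant $\overline{C}$ of $a_h$.
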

\begin{proof}
Let $\alpha>0$ and set $(\btau_h,\bv_h):=(\bsig_h,-\bu_h+\alpha\bdiv\bsig_h)$. Consider the following well known  Young's inequality $ab\leq (2\epsilon_1)^{-1}a^2+2^{-1}\epsilon_1 b^2$.  Replacing this choice of functions in \eqref{eq:A_h}, applying the aforementioned Young's inequality,  and invoking property \eqref{eq:s_stable}, we have 
\begin{multline*}
A_h((\bsig_h,\bv_h),(\bsig_h,-\bu_h+\alpha\bdiv\bsig_h))=a_h(\bsig_h,\bsig_h)+b(\bsig_h,\bu_h)\\
+b(\bsig_h,-\bu_h+\alpha\bdiv\bsig_h)-d(\bu_h,-\bu_h+\alpha\bdiv\bsig_h)\\
=a_h(\bsig_h,\bsig_h)+\alpha b(\bsig_h,\bdiv\bsig_h)+d(\bu_h,\bu_h)-\alpha d(\bu_h,\bdiv\bsig_h)\\
\geq \sum_{E\in\CT_h}\left[\left\|\left(\Pi_h^E\bsig_h\right)^{\texttt{d}}\right\|_{0,E}^2+S^E(\bsig_h-\Pi_h^E\bsig_h,\bsig_h-\Pi_h^E\bsig_h)\right]\\
+\alpha\|\bdiv\bsig_h\|_{0,\O}^2+\kappa\|\bu_h\|_{0,\O}^2
+\alpha\kappa\left(-\frac{\epsilon_1\|\bdiv\bsig_h\|_{0,\O}^2}{2}-\frac{\|\bu_h\|_{0,\O}^2}{2\epsilon_1}\right)\\
\geq\sum_{E\in\CT_h}\left[\frac{1}{\nu}\left\|\left(\Pi_h^E\bsig_h\right)^{\texttt{d}}\right\|_{0,E}^2+c_0\|\bsig_h-\Pi_h^E\bsig_h\|_{0,E}^2\right]+\alpha\left(1-\frac{\kappa\epsilon_1}{2}\right)\|\bdiv\bsig_h\|_{0,\O}^2\\
+\kappa\left(1-\frac{\alpha}{2\epsilon_1}\right)\|\bu_h\|_{0,\O}^2\\
\geq \alpha\left(1-\frac{\kappa\epsilon_1}{2}\right)\|\bdiv\bsig_h\|_{0,\O}^2+\kappa\left(1-\frac{\alpha}{2\epsilon_1}\right)\|\bu_h\|_{0,\O}^2\\
\geq \alpha\left(\frac{\kappa\epsilon_1}{2}-1\right)(-\|\bsig_h\|_{\bdiv,\O}^2)+\kappa\left(1-\frac{\alpha}{2\epsilon_1}\right)\|\bu_h\|_{0,\O}^2\\
=\alpha\left(1-\frac{\kappa\epsilon_1}{2}\right)\|\bsig_h\|_{\bdiv,\O}^2+\kappa\left(1-\frac{\alpha}{2\epsilon_1}\right)\|\bu_h\|_{0,\O}^2.
\end{multline*}

Now, as a consequence of the discrete inf-sup condition \eqref{eq:inf-sup_disc}, we have that the operator $\bdiv:\mathbb{H}_h\rightarrow\mathbf{Q}_h$ is surjective, implying that for $\bu_h\in\mathbf{Q}_h$ there exists $\widetilde{\bsig}_h\in\mathbb{H}_h$ such that $\bdiv\widetilde{\bsig}_h=-\bu_h$. Moreover, from \eqref{eq:inf-sup_disc}  we have that $\|\widetilde{\bsig}_h\|_{\bdiv,\O}\leq \widehat{\gamma}^{-1}\|\bu_h\|_{0,\O}$. Hence, if we set $(\btau_h,\bv_h)=(-\widetilde{\bsig}_h,\boldsymbol{0})$ in \eqref{eq:A_h}  and for $\epsilon_2>0$, we have 
\begin{multline*}
A_h((\bsig_h,\bu_h),(-\widetilde{\bsig}_h,\boldsymbol{0}))=a_h(\bsig_h,-\widetilde{\bsig}_h)-b(\widetilde{\bsig}_h,\bu_h)\\
=\sum_{E\in\CT_h}-a_h^E(\bsig_h,\widetilde{\bsig}_h)+\|\bu_h\|^2_{0,\O}
\geq \sum_{E\in\CT_h}-\overline{C}\|\bsig_h\|_{0,E}\|\widetilde{\bsig}_h\|_{0,\E}+\|\bu_h\|_{0,\O}^2\\
\geq \overline{C}\left(-\frac{\|\bsig_h\|_{0,\O}^2}{2\epsilon_2}-\frac{\epsilon_2\|\widetilde{\bsig}_h\|_{0,\O}^2}{2}\right)+\|\bu_h\|_{0,\O}^2\\
\geq \overline{C}\left(-\frac{\|\bsig_h\|_{\bdiv,\O}^2}{2\epsilon_2}-\frac{\epsilon_2\widehat{\gamma}^{-2}\|\bu_h\|_{0,\O}^2}{2}\right)+\|\bu_h\|_{0,\O}^2\\
=-\frac{\overline{C}}{2\epsilon_2}\|\bsig_h\|_{\bdiv,\O}^2+\left(1-\frac{\overline{C}\epsilon_2\widehat{\gamma}^{-2}}{2} \right)\|\bu_h\|_{0,\O}^2
%=\sum_{E\in\CT_h}-\frac{1}{\nu}\int_E\left(\Pi_h^E\bsig_h\right)^{\texttt{d}}:\left(\Pi_h^E\widetilde{\bsig}_h\right)^{\texttt{d}}+S^E(\bsig_h-\Pi_h^E\bsig_h,\widetilde{\bsig}_h-\Pi_h^E\widetilde{\bsig}_h)+\|\bu_h\|^2_{0,\O}\\
\end{multline*}

Let $\delta>0$ and set $(\btau_h,\bv_h)=(\bsig_h-\delta\widetilde{\bsig}_h,-\bu_h+\alpha\bdiv\bsig_h)$. Operating as before,  we have
\begin{multline*}
A_h((\bsig_h,\bu_h),(\bsig_h-\delta\widetilde{\bsig}_h,-\bu_h+\alpha\bdiv\bsig_h))=a_h(\bsig_h,\bsig_h)+\delta[-a_h(\bsig_h,\widetilde{\bsig}_h)-b(\widetilde{\bsig}_h,\bu_h)]\\
+\alpha[b(\bsig_h,\bdiv\bsig_h)-d(\bu_h,\bdiv\bsig_h)]+\kappa\|\bu_h\|_{0,\O}^2\\
=\sum_{E\in\CT_h}a_h^E(\bsig_h,\bsig_h)+\delta\left[\sum_{E\in\CT_h}-a_h^E(\bsig_h,\widetilde{\bsig}_h)-b(\widetilde{\bsig}_h,\bu_h)\right]\\
+\alpha[b(\bsig_h,\bdiv\bsig_h)-d(\bu_h,\bdiv\bsig_h)]+\kappa\|\bu_h\|_{0,\O}^2\\
%\geq\sum_{E\in\CT_h}\frac{1}{\nu}\left\|\left(\Pi_h^E\bsig_h\right)^{\texttt{d}}\right\|_{0,E}^2+\delta\left[\frac{\overline{C}}{\nu}\left(-\frac{\|\bsig_h\|_{\bdiv,\O}^2}{2\epsilon_3} -\frac{\epsilon_3\|\widetilde{\bsig}_h\|_{\bdiv,\O}^2}{2}\right)+\|\bu_h\|_{0,\O}^2 \right]\\
\geq \delta\left[-\frac{\overline{C}}{2\epsilon_2}\|\bsig_h\|_{\bdiv,\O}^2+\left(1-\frac{\overline{C}\epsilon_2\widehat{\gamma}^{-2}}{2}\|\bu_h\|_{0,\O}^2 \right) \right]\\
+\alpha\left(1-\frac{\kappa\epsilon_1}{2}\right)\|\bsig_h\|_{\bdiv,\O}^2+\kappa\left( 1-\frac{\alpha}{2\epsilon_1}\right)\|\bu_h\|_{0,\O}^2\\
=\left[\alpha\left(1-\frac{\kappa\epsilon_1}{2}\right)-\frac{\delta\overline{C}}{2\epsilon_2} \right]\|\bsig_h\|_{\bdiv,\O}^2+\left[\kappa\left(1-\frac{\alpha}{2\epsilon_1}\right)+\delta\left(1-\frac{\overline{C}\epsilon_2\widehat{\gamma}^{-2}}{2} \right) \ \right]\|\bu_h\|_{0,\O}^2.
%\FL{\geq \delta\left[-\frac{\overline{C}}{2\nu\epsilon_3}\|\bsig_h\|_{\bdiv,\O}^2-\frac{\overline{C}\epsilon_3\widehat{\gamma}^{-2}}{2}\|\bu_h\|_{0,\O}^2+\|\bu_h\|_{0,\O}^2 \right]}\\
%\FL{+\kappa\|\bu_h\|_{0,\O}^2+\alpha\|\bu_h\|_{0,\O}^2+\frac{\alpha\kappa}{2\epsilon_3}\|\bu_h\|_{0,\O}^2+\frac{\alpha\epsilon_3}{2}\|\bsig_h\|_{\bdiv,\O}^2}\\
%\FL{=\left(\frac{\alpha\epsilon_3}{2}-\frac{\delta\overline{C}}{2\nu\epsilon_3}\right)\|\bsig_h\|_{\bdiv,\O}^2+\left(\delta+\kappa+\alpha-\frac{\delta\overline{C}\epsilon_3\widehat{\gamma}^{-2}}{2}+\frac{\alpha\kappa}{2\epsilon_3} \right)\|\bu_h\|_{0,\O}^2}
%+\kappa\|\bu_h\|_{0,\O}^2+\alpha\left[\|\bu_h\|_{0,\O}^2-\kappa\left(-\frac{\|\bu_h\|_{0,\O}^2}{2\epsilon_3}-\frac{\epsilon_3\|\bdiv\bsig_h\|_{0,\O}^2}{2} \right)\right]\\
%\FL{\geq -\frac{\delta\overline{C}\|\bsig_h\|_{0,\O}^2}{2\nu\epsilon_3}-\frac{\alpha\kappa\epsilon_3\|\bdiv\bsig_h\|_{0,\O}^2}{2}}\\
%\FL{+\left(1+\kappa+\alpha-\frac{\alpha\kappa}{2\epsilon_3}-\frac{\delta\overline{C}\epsilon_3\widehat{\gamma}^{-2}}{2\nu} \right)\|\bu_h\|_{0,\O}^2}\\
%\FL{\geq \left( -\frac{\delta\overline{C}}{2\nu\epsilon_3}-\frac{\alpha\kappa\epsilon_3}{2}\right)\|\bsig_h\|_{\bdiv,\O}^2}
%\FL{+\left(1+\kappa+\alpha-\frac{\alpha\kappa}{2\epsilon_3}-\frac{\delta\overline{C}\epsilon_3\widehat{\gamma}^{-2}}{2\nu} \right)\|\bu_h\|_{0,\O}^2}\\
\end{multline*}
Now, choosing
\begin{equation*}
\epsilon_1:=\frac{1}{\kappa},\quad\epsilon_2:=\frac{\widehat{\gamma}^2}{\overline{C}},\quad\alpha:=\frac{1}{\kappa},\quad\delta:=\frac{\widehat{\gamma}^2}{2\kappa\overline{C}^2},
\end{equation*}
we have
\begin{equation*}
A_h((\bsig_h,\bu_h),(\bsig_h-\delta\widetilde{\bsig}_h,-\bu_h+\alpha\bdiv\bsig_h))\geq C_J\vertiii{(\bsig_h,\bu_h)}^2,
\end{equation*}
where $C_J:=\displaystyle\min\left\{\frac{1}{4\kappa}, \frac{\kappa}{2}+\frac{\widehat{\gamma}^2}{4\kappa\overline{C}^2}\right\}$.
\end{proof}
%\section{Error Analysis}

The next step is to analyze the uniqueness of the discrete solution. This is contained in the following result.
\begin{lemma}
\label{lmm:uniqueness}
Let $(\bsig_h,\bu_h)\in\mathbb{H}_h\times\mathbf{Q}_h$ be a solution of \eqref{eq:fv:disc}. If  the following condition holds $\displaystyle\frac{C_J}{\nu}\left(\frac{2+\sqrt{2}}{2}\right)\|\boldsymbol{\beta}\|_{\infty,\O} <1$, where  $C_J>0$ is the constant given by Lemma \ref{inf-sup_Ah}, then $(\bsig_h,\bu_h)\in\mathbb{H}_h\times\mathbf{Q}_h$ is unique.
\end{lemma}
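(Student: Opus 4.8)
The plan is to observe first that, since $\boldsymbol{\beta}$ is fixed data, the form $c_h(\cdot,\cdot)$ is bilinear, so the whole system \eqref{eq:fv:disc} is \emph{linear} in $(\bsig_h,\bu_h)$; consequently uniqueness is equivalent to showing that the associated homogeneous problem ($F\equiv 0$) admits only the trivial solution. Accordingly, I would take two solutions of \eqref{eq:fv:disc}, subtract them, and denote the difference again by $(\bsig_h,\bu_h)\in\mathbb{H}_{0,h}\times\mathbf{Q}_h$. The functional $F$ cancels, so testing the first equation with $\btau_h$ and the second with $\bv_h$ and adding, the definition \eqref{eq:A_h} of $A_h$ gives
\begin{equation*}
A_h((\bsig_h,\bu_h),(\btau_h,\bv_h)) = -\,c_h(\bu_h,\btau_h)\qquad\forall(\btau_h,\bv_h)\in\mathbb{H}_{0,h}\times\mathbf{Q}_h.
\end{equation*}

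Next I would feed this identity into the stability estimate of Lemma \ref{inf-sup_Ah}. Applied to the pair $(\bsig_h,\bu_h)$, that lemma furnishes a test pair $(\btau_h,\bv_h)$ with $\vertiii{(\btau_h,\bv_h)}\leq C\vertiii{(\bsig_h,\bu_h)}$ and
\begin{equation*}
\vertiii{(\bsig_h,\bu_h)}^2 \leq C_J\,A_h((\bsig_h,\bu_h),(\btau_h,\bv_h)) = -\,C_J\,c_h(\bu_h,\btau_h)\leq C_J\,\lvert c_h(\bu_h,\btau_h)\rvert .
\end{equation*}
Summing the elementwise continuity bound for $c_h^E$ over $\mathcal{T}_h$ and using Cauchy--Schwarz yields
\begin{equation*}
\lvert c_h(\bu_h,\btau_h)\rvert \leq \frac{\|\boldsymbol{\beta}\|_{\infty,\O}}{\nu}\left(\frac{2+\sqrt{2}}{2}\right)\|\bu_h\|_{0,\O}\,\|\btau_h\|_{\bdiv,\O}.
\end{equation*}
Bounding $\|\bu_h\|_{0,\O}\leq\vertiii{(\bsig_h,\bu_h)}$ and $\|\btau_h\|_{\bdiv,\O}\leq\vertiii{(\btau_h,\bv_h)}\leq C\vertiii{(\bsig_h,\bu_h)}$, I would arrive at an inequality of the form $\vertiii{(\bsig_h,\bu_h)}^2 \leq \Lambda\,\vertiii{(\bsig_h,\bu_h)}^2$, where $\Lambda$ is a constant multiple of $\tfrac{C_J}{\nu}\bigl(\tfrac{2+\sqrt{2}}{2}\bigr)\|\boldsymbol{\beta}\|_{\infty,\O}$ collecting the stability constant $C$ of Lemma \ref{inf-sup_Ah} and the continuity constant of $c_h$. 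Under the stated smallness condition this factor is strictly less than $1$, so $\vertiii{(\bsig_h,\bu_h)}=0$, i.e. the two solutions coincide, giving uniqueness.

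The main obstacle, and the reason a small-data hypothesis appears for an otherwise linear problem, is that Lemma \ref{inf-sup_Ah} only provides an inf-sup type control of the \emph{non-convective} block $A_h$ through a surrogate test pair $(\btau_h,\bv_h)$ rather than genuine coercivity against $(\bsig_h,\bu_h)$ itself. One must therefore absorb the convective perturbation $c_h$ into this stability, which forces careful bookkeeping of the constants $C_J$, $C$, and the continuity constant of $c_h$; the perturbation can be controlled precisely when $\|\boldsymbol{\beta}\|_{\infty,\O}$ is small enough to push $\Lambda$ below $1$, which is exactly the displayed condition.
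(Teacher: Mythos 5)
Your proposal is correct and follows essentially the same route as the paper: subtract the two solutions, apply the discrete stability of $A_h$ from Lemma \ref{inf-sup_Ah} to the difference, bound the resulting $c_h$ term by its continuity, and absorb it using the smallness condition. The only cosmetic difference is that you honestly carry the constant $C$ from $\vertiii{(\btau_h,\bv_h)}\leq C\vertiii{(\bsig_h,\bu_h)}$ into the final smallness threshold, whereas the paper implicitly takes $C=1$ at that step.
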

\begin{proof}
Let us suppose that $(\bsig_h^1,\bu_h^1), (\bsig_h^2,\bu_h^2)\in\mathbb{H}_h\times\mathbf{Q}_h$  are two solutions of  \eqref{eq:fv:disc}. We will prove that are the same. Invoking Lemma \ref{inf-sup_Ah} we have for $(\btau_h,\bv_h)\in\mathbb{H}_{h}\times\mathbf{Q}$ that there holds $\vertiii{(\btau_h,\bv_h)}\leq \vertiii{(\bsig_h^1-\bsig_h^2,\bu_h^1-\bu_h^2)}$ and  there exists a constant $C_J>0$ such that 
\begin{align*}
\vertiii{(\bsig_h^1-\bsig_h^2,\bu_h^1-\bu_h^2)}^2&\leq C_JA_h((\bsig_h^1-\bsig_h^2,\bu_h^1-\bu_h^2),(\btau_h,\bv_h))\\
&=C_J[A_h((\bsig_h^1,\bu_h^1),(\btau_h,\bv_h))-A_h((\bsig_h^2,\bu_h^2),(\btau_h,\bv_h))]\\
%&=C_J[c_h(\bu_h^2,\btau_h)-c_h(\bu_h^1,\btau_h)]\\
%&=C_Jc_h(\bu_h^2-\bu_h^1,\btau_h)\\
&=C_J\sum_{E\in\CT_h}c_h^E(\bu_h^2-\bu_h^1,\btau_h)\\
&\leq \frac{C_J}{\nu}\left(\frac{2+\sqrt{2}}{2}\right)\sum_{E\in\CT_h}\|\boldsymbol{\beta}\|_{\infty,E}\|\bu_h^2-\bu_h^1\|_{0,E}\|\btau_h\|_{\bdiv,E}\\
&=\frac{C_J}{\nu}\left(\frac{2+\sqrt{2}}{2}\right)\|\boldsymbol{\beta}\|_{\infty,\O}\|\bu_h^2-\bu_h^1\|_{0,\O}\|\btau_h\|_{\bdiv,\O}.
\end{align*}
The above  is summarize in the following estimate
\begin{equation*}
\|\bsig_h^2-\bsig_h^1\|^2_{\bdiv,\O}+\|\bu_h^2-\bu_h^1\|^2_{0,\O}-\frac{C_J}{\nu}\left(\frac{2+\sqrt{2}}{2}\right)\|\boldsymbol{\beta}\|_{\infty,\O}\|\bu_h^2-\bu_h^1\|_{0,\O}\|\btau_h\|_{\bdiv,\O}\leq 0,
\end{equation*}
implying that 
\begin{equation*}
\left(1-\frac{C_J}{\nu}\left(\frac{2+\sqrt{2}}{2}\right)\|\boldsymbol{\beta}\|_{\infty,\O} \right)(\|\bsig_h^2-\bsig_h^1\|^2_{\bdiv,\O}+\|\bu_h^2-\bu_h^1\|^2_{0,\O})\leq 0.
\end{equation*}
Hence, the following condition on the data
$$\frac{C_J}{\nu}\left(\frac{2+\sqrt{2}}{2}\right)\|\boldsymbol{\beta}\|_{\infty,\O} <1,$$
concludes the proof.
\end{proof}
%%__________________________________________________________________________________
%%      ######################### SECTION ##########################################
%%__________________________________________________________________________________
\section{Error analysis}
\label{sec:error}
The aim of this section is to prove error estimates for the pseudostress and velocity fields. Let us begin by recalling some approximation properties of the spaces.	
	Let  $\bcI_k^h: \mathbb{H}^t(\O) \rightarrow \mathbb{H}_h$ be the tensorial version of the VEM-interpolation operator, 
	which satisfies the following classical error estimate, see \cite[Lemma 6]{MR3660301},
	\begin{equation}\label{asymp0}
		\norm{\btau - \bcI_k^h \btau}_{0,\O} \leq C h^{\min\{t, k+1\}} \norm{\btau}_{t,\O} \qquad \forall \btau \in \mathbb{H}^t(\O), \quad t>1/2.
	\end{equation}
%	Also,  for less regular tensorial fields we have the following estimate, see \cite[Theorem 3.16]{MR2009375}
%	\begin{equation}\label{asymp00}
%		\norm{\btau - \bcI_k^h \btau}_{0,\O} \leq C h^t (\norm{\btau}_{t,\O}
%		+ \norm{\btau}_{\bdiv,\O})  \quad \forall \btau \in \mathbb{H}^t(\O)\cap \mathbb{H}(\div;\O) \quad t\in (0, 1/2].
%	\end{equation}
	Moreover, the following commuting diagram property holds true, see \cite[Lemma 5]{MR3660301}:
	\begin{equation}\label{asympDiv}
		\norm{\bdiv (\btau - \bcI_k^h\btau) }_{0,\O} = \norm{\bdiv \btau - \mathcal{P}_{k}^h \bdiv \btau }_{0,\O} 
		\leq C h^{\min\{t, k\}} \norm{\bdiv\btau}_{t,\O},
	\end{equation}
	for  $\bdiv \btau \in \H^t(\O)^{2}$ and  $\mathcal{P}_{k}^h$ being  the $\LO^2$-orthogonal projection onto $\textsc{P}_{k}$.
	Also  we define the local restriction of the interpolant operator as $\btau_I:=\bcI_k^h(\btau)|_E\in \mathbb{H}_h^E$.

For  the forthcoming analysis, we make the following assumption on the solution.
\begin{assumption}
\label{as:dependence}
Let $\bF\in\mathbf{Q}$. There exists $s\in[1,k+1]$ such that
\begin{equation*}
\|\bsig\|_{\mathbb{H}^s(\O)}+\|\bdiv\bsig\|_{1+s,\O}+\|\bu\|_{1+s,\O}\leq C_{\O}\|\bF\|_{0,\O},
\end{equation*}
where the constant $C_{\O}>0$ depends on the domain $\O$ and the physical parameters.
\end{assumption}

Now we are in position to derive an a priori error estimate for the method.
\begin{lemma}
\label{lmm:conv1}
Let $\boldsymbol{f}\in \mathbf{Q}$. Let  $(\bsig,\bu)\in\mathbb{H}_0\times\mathbf{Q}$ and $(\bsig_h,\bu_h)\in\mathbb{H}_{0,h}\times\mathbf{Q}_h$ be the  solutions of problems \eqref{eq:variational1_H0} and \eqref{eq:fv:disc}, respectively.   Suppose that $\bu\otimes\boldsymbol{\beta}\in \mathbb{H}^s(\O)$ and   that Assumption \ref{as:dependence} holds. Then, if  $C_J\|\boldsymbol{\beta}\|_{\infty,\O}\nu^{-1}<\dfrac{1}{2}$, there exists a positive constant $\widehat{C}_{\nu,\beta,\kappa}$, independent of $h$, such that 
$$
\vertiii{(\bsig-\bsig_h,\bu-\bu_h)}\leq  \widehat{C}_{\nu,\beta,\kappa}h^s\|\boldsymbol{f}\|_{0,\O}.%.\|\boldsymbol{f}\|_{0,\O},%+|\tilde{u}-\tilde{u}_I|_{1,\Omega}+|\tilde{u}-\tilde{u}_{\pi}|_{1,\Omega}\right).
$$

\end{lemma}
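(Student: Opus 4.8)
The plan is to derive the error estimate by combining the discrete stability result of Lemma~\ref{inf-sup_Ah} with the consistency of the scheme and the approximation properties \eqref{asymp0}--\eqref{asympDiv}. I would introduce the interpolant $\bsig_I:=\bcI_k^h\bsig\in\mathbb{H}_{0,h}$ and the projection $\bu_I:=\mathcal{P}_{k-1}^h\bu\in\mathbf{Q}_h$, and then split the error via a triangle inequality as
\begin{equation*}
\vertiii{(\bsig-\bsig_h,\bu-\bu_h)}\leq \vertiii{(\bsig-\bsig_I,\bu-\bu_I)}+\vertiii{(\bsig_I-\bsig_h,\bu_I-\bu_h)}.
\end{equation*}
The first term is controlled immediately by \eqref{asymp0}, \eqref{asympDiv} and Assumption~\ref{as:dependence}, yielding a bound of order $h^s\|\bF\|_{0,\O}$. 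The work is therefore to estimate the \emph{discrete} error $(\be_{\bsig},\be_{\bu}):=(\bsig_I-\bsig_h,\bu_I-\bu_h)$, which lives in the discrete spaces where Lemma~\ref{inf-sup_Ah} applies.

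Next I would apply Lemma~\ref{inf-sup_Ah} to $(\be_{\bsig},\be_{\bu})$ to obtain $(\btau_h,\bv_h)$ with $\vertiii{(\btau_h,\bv_h)}\leq C\vertiii{(\be_{\bsig},\be_{\bu})}$ and
\begin{equation*}
\vertiii{(\be_{\bsig},\be_{\bu})}^2\leq C_J\, A_h((\be_{\bsig},\be_{\bu}),(\btau_h,\bv_h)).
\end{equation*}
Now I would expand the right-hand side using the definition \eqref{eq:A_h} of $A_h$ and insert the two schemes. Since $A_h$ omits the convective term, I would use the continuous equation \eqref{eq:variational1_H0} and the discrete equation \eqref{eq:fv:disc} to rewrite $A_h((\bsig_I,\bu_I),(\btau_h,\bv_h))$ by subtracting and adding $A_h((\bsig,\bu),(\btau_h,\bv_h))$ and the convective contributions. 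This produces a consistency error built from: (i) the difference between $a_h(\bsig_I,\cdot)$ and the exact form $a(\bsig,\cdot)$, handled by the consistency property of $a_h^E$ and property (P3) together with \eqref{asymp0}; (ii) terms $b(\btau_h,\bu-\bu_I)$ and $b(\bsig-\bsig_I,\bv_h)$ and $d(\bu-\bu_I,\bv_h)$, controlled by \eqref{asympDiv}, \eqref{asymp0} and continuity; and (iii) the convective discrepancy $c_h(\bu_h,\btau_h)-c(\bu,\btau_h)$.

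The main obstacle is term (iii), the convective term, because it is precisely the piece that prevents coercivity and forces the smallness hypothesis $C_J\|\boldsymbol{\beta}\|_{\infty,\O}\nu^{-1}<\tfrac12$. I would split it as
\begin{equation*}
c_h(\bu_h,\btau_h)-c(\bu,\btau_h)=\big[c_h(\bu_h,\btau_h)-c_h(\bu_I,\btau_h)\big]+\big[c_h(\bu_I,\btau_h)-c(\bu,\btau_h)\big].
\end{equation*}
The first bracket equals $c_h(\bu_I-\bu_h,\btau_h)=-c_h(\be_{\bu},\btau_h)+c_h(\bu_I-\bu,\btau_h)$; using the continuity bound $|c_h^E(\bv_h,\btau_h)|\leq \nu^{-1}\tfrac{2+\sqrt2}{2}\|\boldsymbol{\beta}\|_{\infty,E}\|\bv_h\|_{0,E}\|\btau_h\|_{\bdiv,E}$ established earlier, the term $c_h(\be_{\bu},\btau_h)$ produces a contribution of size $\tfrac{C_J}{\nu}\tfrac{2+\sqrt2}{2}\|\boldsymbol{\beta}\|_{\infty,\O}\vertiii{(\be_{\bsig},\be_{\bu})}^2$, which is absorbed into the left-hand side exactly under the smallness condition (mirroring Lemma~\ref{lmm:uniqueness}). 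The remaining consistency pieces, $c_h(\bu_I-\bu,\btau_h)$ and the second bracket, are estimated by the continuity of $c_h$, the stability (P2) of $\Pi_h^E$ applied to $\btau_h$, and the approximation bound for $\bu\otimes\boldsymbol{\beta}\in\mathbb{H}^s(\O)$, all giving order $h^s\|\bF\|_{0,\O}$ after invoking Assumption~\ref{as:dependence}. Collecting, cancelling one factor of $\vertiii{(\be_{\bsig},\be_{\bu})}$, and recombining with the interpolation error finishes the proof with $\widehat{C}_{\nu,\beta,\kappa}$ gathering the constants $C_J$, $\widehat{\gamma}$, $C_1$, and $C_{\O}$.
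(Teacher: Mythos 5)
Your proposal follows essentially the same route as the paper's proof: the same triangle-inequality splitting into interpolation and discrete errors, the same application of Lemma~\ref{inf-sup_Ah} to the discrete error, the same decomposition of the resulting consistency terms (the paper's $\mathbf{M}_1$--$\mathbf{M}_5$), and the same absorption of the convective piece $c_h(\bu_I-\bu_h,\btau_h)$ into the left-hand side via the smallness condition. The argument is correct and matches the paper's strategy in all essential respects.
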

\begin{proof}
Let $(\bsig,\bu)$ and $(\bsig_h,\bu_h)$ be the solutions 
of problems \eqref{eq:variational1_H0} and \eqref{eq:fv:disc}, respectively,  and 
let $(\btau_h,\bv_h)\in\mathbb{H}_{0,h}\times\mathbf{Q}_h$ be such that
$$\vertiii{(\btau_h,\bv_h)}\leq C_J\vertiii{(\bcI_k^h\bsig-\bsig,\mathcal{P}_k^h\bu-\widehat{\bu})},$$ where 
$\bcI_k^h\bsig$ and $\mathcal{P}_k^h\bu$ are the best approximations of $\bsig$ and $\bu$, respectively.
Then, from triangle inequality we have
$$\vertiii{(\bsig-\bsig_h,\bu-\bu_h)}
\leq \vertiii{(\bsig-\bcI_k^h\bsig,\bu-\mathcal{P}_k^h\bu)}+
\vertiii{(\bcI_k^h\bsig-\bsig_h,\mathcal{P}_k^h\bu-\bu_h)}.$$

Let us focus on the second term on the right-hand side of the inequality above. Invoking Lemma \ref{inf-sup_Ah}  we have 
\begin{multline*}
C_J^{-1}\vertiii{(\bcI_k^h\bsig-\bsig_h,\mathcal{P}_k^h\bu-\bu_h)}^2\leq A_h((\bcI_k^h\bsig-\bsig_h,\mathcal{P}_k^h\bu-\bu_h),(\btau_h,\bv_h))\\
= a_h(\bcI_k^h\bsig-\bsig_h,\btau_h)+b(\btau_h,\mathcal{P}_k^h\bu-\bu_h)+b(\bcI_k^h\bsig-\bsig_h,\bv_h)-d(\mathcal{P}_k^h\bu-\bu_h,\bv_h)\\
=\sum_{E\in\CT_h} a_h^E(\bcI_k^h\bsig-\Pi_h^E\bsig,\btau_h)+a_h^E(\Pi_h^E\bsig,\btau_h)-a_h^E(\bsig_h,\btau_h)
+b^E(\btau_h,\mathcal{P}_k^h\bu)\\
-b^E(\btau_h,\bu_h)
+b^E(\bcI_k^h\bsig,\bv_h)
-b^E(\bsig_h,\bv_h)-d^E(\mathcal{P}_k^h\bu,\bv_h)+d^E(\bu_h,\bv_h)\\
=\underbrace{\sum_{E\in\CT_h}\left[a_h^E(\bcI_k^h\bsig-\Pi_h^E\bsig,\btau_h)+a^E(\Pi_h^E\bsig-\bsig,\btau_h)\right]}_{\mathbf{M}_1}+\underbrace{b(\btau_h,\mathcal{P}_k^h\bu-\bu)}_{\mathbf{M}_2}\\
+\underbrace{b(\bcI_k^h\bsig-\bsig,\bv_h)}_{\mathbf{M}_3}-\underbrace{d(\mathcal{P}_k^h\bu-\bu,\bv_h)}_{\mathbf{M}_4}+\underbrace{c_h(\bu_h,\btau_h)-c(\bu,\btau_h)}_{\mathbf{M}_5}.
\end{multline*}
We now proceed to derive bounds for each term $\mathbf{M}_i$, with $i=1,...,5.$. The term $\mathbf{M}_1$ is bounded by using the continuity of the bilinear forms $a_h^E(\cdot,\cdot)$ and $a(\cdot,\cdot)$
\begin{multline*}
\mathbf{M}_1\leq \sum_{E\in\CT_h}\dfrac{1}{\nu}\max\left\{\left(\dfrac{\sqrt{2}+2}{2}\right)^2,c_1\right\}(\|\bcI_k^h\bsig-\Pi_h^E\bsig\|_{0,E}^2+\|\bcI_k^h\bsig-\Pi_h^E\bcI_k^h\bsig\|_{0,E}^2)^{1/2}\\
\times(\|\btau_h\|_{0,E}^2+\|\btau_h-\Pi_h^E\btau_h\|_{0,E}^2)^{1/2}+\sum_{E\in\CT_h}\left(\dfrac{\sqrt{2}+2}{2\nu}\right)^2\|\Pi_h^E\bsig-\bsig\|_{0,E}\|\btau_h\|_{0,E}\\
\leq \sum_{E\in\CT_h}\dfrac{C}{\nu}\max\left\{\left(\dfrac{\sqrt{2}+2}{2}\right)^2,c_1\right\}\left(\|\bcI_k^h\bsig-\Pi_h^E\bsig\|_{0,E}+\|\bcI_k^h\bsig-\Pi_h^E\bcI_k^h\bsig\|_{0,E}\right.\\
\left.+\|\Pi_h^E\bsig-\bsig\|_{0,E}\right)\|\btau_h\|_{0,E}\\
\leq \sum_{E\in\CT_h}\dfrac{3C}{\nu}\max\left\{\left(\dfrac{\sqrt{2}+2}{2}\right)^2,c_1\right\}\left(\|\bsig-\bcI_k^h\bsig\|_{0,E}+\|\bsig-\Pi_h^E\bsig\|_{0,E}\right)\|\btau_h\|_{\bdiv,E}\\
\leq \dfrac{3CC_{\O}}{\nu}\max\left\{\left(\dfrac{\sqrt{2}+2}{2}\right)^2,c_1\right\}h^s\|\boldsymbol{f}\|_{0,\O}\|\btau_h\|_{\bdiv,\O},
\end{multline*}
%\begin{multline*}
%\mathbf{M}_1\leq \sum_{E\in\CT_h}\max\left\{\frac{\widehat{C}^2}{\nu}\left(\frac{2+\sqrt{2}}{2}\right)^2,c_1(1+\widehat{C})\right\}\\
%\times\left(\|\bsig-\bcI_k^h\bsig\|_{\bdiv,E}+2\|\bsig-\Pi_h^E\bsig\|_{\bdiv,E}\right)\|\btau_h\|_{\bdiv,E}\\
%\leq C\max\left\{\frac{\widehat{C}^2}{\nu}\left(\frac{2+\sqrt{2}}{2}\right)^2,c_1(1+\widehat{C})\right\} h^s\|\bsig\|_{\mathbb{H}^s(\O)}\|\btau_h\|_{\bdiv,\O}\\
%\leq C\GR{C_{\O}}\max\left\{\frac{\widehat{C}^2}{\nu}\left(\frac{2+\sqrt{2}}{2}\right)^2,c_1(1+\widehat{C})\right\} h^s\|\boldsymbol{f}\|_{0,\O}\|\btau_h\|_{\bdiv,\O},
%\end{multline*}
where $C$ is the constant associated with the approximation properties of the interpolant and the projector (see \eqref{asymp0} and \eqref{eq_aproxpro}). The terms $\mathbf{M}_2$, $\mathbf{M}_3$, and $\mathbf{M}_4$ can be bounded directly using the Cauchy-Schwarz inequality and the approximation properties, in conjunction with the regularity assumptions as follows
\begin{multline*}
\mathbf{M}_2+\mathbf{M}_3+\mathbf{M}_4\leq \|\btau_h\|_{\bdiv,\O}\|\mathcal{P}_k^h\bu-\bu\|_{0,\O}
+(\|\bcI_k^h\bsig-\bsig\|_{\bdiv,\O}+\kappa\|\mathcal{P}_k^h\bu-\bu\|_{0,\O})\|\bv_h\|_{0,\O}\\
\leq CC_{\O}\max\{1,\kappa\}h^s\|\boldsymbol{f}\|_{0,\O}(\|\btau_h\|_{\bdiv,\O}+\|\bv_h\|_{0,\O}).
\end{multline*}
Now, let us focus on the term $\mathbf{M}_5$:
\begin{multline*}
\mathbf{M}_5=c_h(\bu_h-\mathcal{P}_k^h\bu,\btau_h)+c_h(\mathcal{P}_k^h\bu,\btau_h)-\sum_{E\in\CT_h}\left(c_h^E(\bu,\btau_h-\Pi_h^E\btau_h)+c^E(\bu,\Pi_h^E\btau_h)\right)\\
=c_h(\bu_h-\mathcal{P}_k^h\bu,\btau_h)+\sum_{E\in\CT_h}\dfrac{1}{\nu}\int_E\left((\mathcal{P}_k^h\bu-\bu)\otimes\boldsymbol{\beta}\right)^d:\Pi_h^E\btau_h\\
-\sum_{E\in\CT_h}\dfrac{1}{\nu}\int_E\left(\left(\bu\otimes\boldsymbol{\beta}\right)^d-\Pi_h^E\left(\bu\otimes\boldsymbol{\beta}\right)^d\right):(\btau_h-\Pi_h^E\btau_h)\\
\leq \sum_{E\in\CT_h}\frac{\|\boldsymbol{\beta}\|_{\infty,E}}{\nu}\left(2+\sqrt{2}\right)\|\bu_h-\mathcal{P}_k^h\bu\|_{0,E}\|\Pi_h^E\btau_h\|_{0,E}\\
+\sum_{E\in\CT_h}\dfrac{2+\sqrt{2}}{2\nu}\|\bu\otimes\boldsymbol{\beta}-\Pi_h^E\left(\bu\otimes\boldsymbol{\beta}\right)\|_{0,\E}\|\btau_h-\Pi_h^E\btau_h\|_{0,\E}\\
\leq C\frac{\|\boldsymbol{\beta}\|_{\infty,\O}}{\nu}\left(2+\sqrt{2}\right)\|\bu_h-\mathcal{P}_k^h\bu\|_{0,\O}\|\btau_h\|_{\bdiv,\O}\\
+Ch^s\left(\dfrac{2+\sqrt{2}}{2\nu}\right)\sum_{E\in\CT_h}|\bu\otimes\boldsymbol{\beta}|_{s,E}\|\btau_h\|_{\bdiv,\O}\\
\leq C\frac{\|\boldsymbol{\beta}\|_{\infty,\O}}{\nu}\left(2+\sqrt{2}\right)\|\bu_h-\mathcal{P}_k^h\bu\|_{0,\O}\|\btau_h\|_{\bdiv,\O}\\
+Ch^s\left(\dfrac{2+\sqrt{2}}{2\nu}\right)\|\boldsymbol{\beta}\|_{\mathbf{W}^{s,\infty}(\O)}\|\bu\|_{s,\O}\|\btau_h\|_{\bdiv,\O}\\
\leq C\frac{\|\boldsymbol{\beta}\|_{\infty,\O}}{\nu}\left(2+\sqrt{2}\right)\|\bu_h-\mathcal{P}_k^h\bu\|_{0,\O}\|\btau_h\|_{\bdiv,\O}\\
+CC_{\O}h^s\left(\dfrac{2+\sqrt{2}}{2\nu}\right)\|\boldsymbol{\beta}\|_{\mathbf{W}^{s,\infty}(\O)}\|\boldsymbol{f}\|_{0,\O}\|\btau_h\|_{\bdiv,\O}.
\end{multline*}

Finally, by combining all the estimates obtained above, the following bound is obtained
\begin{multline*}
\left(\dfrac{1}{C_J}-\frac{\|\boldsymbol{\beta}\|_{\infty,\O}}{\nu}\left(2+\sqrt{2}\right)\right)\vertiii{(\bcI_k^h\bsig-\bsig_h,\mathcal{P}_k^h\bu-\bu_h)}^2\\
\leq CC_{\O}\max\left\{C_\nu,\max\{1,\kappa\},\left(\dfrac{2+\sqrt{2}}{2\nu}\right)\|\boldsymbol{\beta}\|_{\mathbf{W}^{s,\infty}(\O)}\right\}\|\boldsymbol{f}\|_{0,\O}\\
\times\vertiii{(\bcI_k^h\bsig-\bsig_h,\mathcal{P}_k^h\bu-\bu_h)},
\end{multline*}
where $C_\nu:= \dfrac{3}{\nu}\max\left\{\left(\dfrac{\sqrt{2}+2}{2}\right)^2,c_1\right\}.$
Hence, the proof is completed by taking $\widehat{C}_{\nu,\beta,\kappa}:=CC_{\O}\max\left\{C_\nu,\max\{1,\kappa\},\left(\dfrac{2+\sqrt{2}}{2\nu}\right)\|\boldsymbol{\beta}\|_{\mathbf{W}^{s,\infty}(\O)}\right\}$.
\end{proof}

With this result at hand, we can also provide an error estimate for the pressure.
\begin{corollary}
\label{cor:error_prressure}
Under Assumption \ref{as:dependence}, if $p\in\L_0^2(\O)$ and $p_h\in\mathbf{Q}_h\cap\L_0^2(\O)$ are given by \eqref{eq:continuous_pressure} and \eqref{eq:discrete_pressure}, respectively., then, the following estimate holds
\begin{equation*}
\|p-p_h\|_{0,\O}\leq \mathbf{C}h^s\|\bF\|_{0,\O},
\end{equation*}
where $\mathbf{C}:=\max\left\{\displaystyle\frac{\sqrt{2}}{2},\|\boldsymbol{\beta}\|_{\infty,\O}\right\}\widehat{C}_{\nu,\beta,\kappa}$.
\end{corollary}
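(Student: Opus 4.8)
The plan is to start from the closed-form expressions \eqref{eq:continuous_pressure} and \eqref{eq:discrete_pressure} for $p$ and $p_h$, subtract them, and reduce the pressure error to quantities already controlled by the energy estimate of Lemma \ref{lmm:conv1}. Subtracting the two identities gives
$$p-p_h=-\tfrac12\bigl[\tr(\bsig-\Pi_h\bsig_h)+\tr((\bu-\bu_h)\otimes\boldsymbol{\beta})\bigr],$$
so by the triangle inequality it suffices to estimate the two trace contributions separately. For the first I would use the elementary bound $|\tr\btau|\le\sqrt2\,\|\btau\|$ (Cauchy--Schwarz applied to the two diagonal entries of a $2\times2$ tensor), and for the second the identity $\tr(\bv\otimes\boldsymbol{\beta})=\bv\cdot\boldsymbol{\beta}$, which yields $\|\tr((\bu-\bu_h)\otimes\boldsymbol{\beta})\|_{0,\O}\le\|\boldsymbol{\beta}\|_{\infty,\O}\,\|\bu-\bu_h\|_{0,\O}$.

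The central step is the pseudostress term. I would insert $\pm\,\Pi_h\bsig$ and split
$$\bsig-\Pi_h\bsig_h=(\bsig-\Pi_h\bsig)+\Pi_h(\bsig-\bsig_h).$$
The consistency part $\bsig-\Pi_h\bsig$ is bounded by the projector approximation estimate \eqref{eq_aproxpro} (property (P3)) together with Assumption \ref{as:dependence}, giving $\|\bsig-\Pi_h\bsig\|_{0,\O}\le Ch^s|\bsig|_{s,\O}\le CC_{\O}h^s\|\bF\|_{0,\O}$. The discrete part is controlled by the $\L^2$-stability (P2) of $\Pi_h$, so that $\|\Pi_h(\bsig-\bsig_h)\|_{0,\O}\le\|\bsig-\bsig_h\|_{0,\O}\le\|\bsig-\bsig_h\|_{\bdiv,\O}\le\vertiii{(\bsig-\bsig_h,\bu-\bu_h)}$. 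Both this pseudostress error and the velocity error $\|\bu-\bu_h\|_{0,\O}$ are then absorbed by Lemma \ref{lmm:conv1}, which under the smallness condition $C_J\|\boldsymbol{\beta}\|_{\infty,\O}\nu^{-1}<1/2$ provides $\vertiii{(\bsig-\bsig_h,\bu-\bu_h)}\le\widehat{C}_{\nu,\beta,\kappa}h^s\|\bF\|_{0,\O}$.

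Collecting the two contributions, the factor $\tfrac12\sqrt2=\tfrac{\sqrt2}{2}$ multiplies the pseudostress part while $\|\boldsymbol{\beta}\|_{\infty,\O}$ multiplies the velocity part; factoring out $\widehat{C}_{\nu,\beta,\kappa}h^s\|\bF\|_{0,\O}$ and taking the maximum of the two remaining constants yields the claimed bound with $\mathbf{C}=\max\{\tfrac{\sqrt2}{2},\|\boldsymbol{\beta}\|_{\infty,\O}\}\widehat{C}_{\nu,\beta,\kappa}$. The only non-routine point is the treatment of $\bsig-\Pi_h\bsig_h$: because the recovered pressure involves the projected discrete pseudostress rather than $\bsig_h$ itself, an extra projection-consistency term $\bsig-\Pi_h\bsig$ appears that is absent from the energy error, and one must verify it still decays at the optimal rate $h^s$ — this is precisely where the regularity of $\bsig$ granted by Assumption \ref{as:dependence} is invoked. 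Everything else is a direct application of the trace inequalities and the already-established estimate of Lemma \ref{lmm:conv1}.
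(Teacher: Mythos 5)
Your proposal is correct and follows essentially the same route as the paper: subtract the two pressure identities, bound the traces by $\sqrt{2}\,\|\cdot\|$ and $\|\boldsymbol{\beta}\|_{\infty,\O}\|\bu-\bu_h\|_{0,\O}$, split $\bsig-\Pi_h\bsig_h$ via $\pm\Pi_h\bsig$ using (P2)--(P3), and conclude with Lemma \ref{lmm:conv1}. Your explicit identification of the extra projection-consistency term $\bsig-\Pi_h\bsig$ is exactly the step the paper handles (somewhat tersely) in its final inequality.
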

\begin{proof}
From identities \eqref{eq:continuous_pressure} and \eqref{eq:discrete_pressure} we have
\begin{align*}
\|p-p_h\|_{0,\O}&=\Big\|-\frac{1}{2}(\tr(\bsig)+\tr(\bu\otimes\boldsymbol{\beta}))-\left(-\frac{1}{2}(\tr(\Pi_h\bsig_h)+\tr(\bu_h\otimes\boldsymbol{\beta}))\right) \Big\|_{0,\O} \\
&\leq\frac{1}{2}\|\tr(\bsig-\widehat{\Pi}_h\bsig_h)\|_{0,\O}+\frac{1}{2}\|\tr((\bu_h-\bu)\otimes\boldsymbol{\beta})\|_{0,\O}\\
&\leq \max\left\{\frac{\sqrt{2}}{2},\|\boldsymbol{\beta}\|_{\infty,\O}\right\}(\|\bsig-\Pi_h\bsig_h\|_{0,\O}+\|\bu-\bu_h\|_{0,\O})\\
&\leq \max\left\{\frac{\sqrt{2}}{2},\|\boldsymbol{\beta}\|_{\infty,\O}\right\}(\|\bsig-\Pi_h\bsig\|_{0,\O}+\|\bsig-\bsig_h\|_{0,\O}+\|\bu-\bu_h\|_{0,\O})\\
&\leq \max\left\{\frac{\sqrt{2}}{2},\|\boldsymbol{\beta}\|_{\infty,\O}\right\}\widehat{C}_{\nu,\beta,\kappa}h^s\|\boldsymbol{f}\|_{0,\O},
\end{align*}
where for the last estimate, we have used the stability of the projector, together with the approximation properties \eqref{eq_aproxpro} and the previous lemma.
\end{proof}
\section{Numerical experiments}
\label{sec:numerics}
Now we report a series of numerical tests in order to assess the performance of the proposed mixed method. All the results have been obtained with a Matlab code. The matrix system is solved with the standard instruction "$\backslash$". 

Regarding the VEM scheme, as in \cite{MR2997471}, a choice for $S^{E}(\cdot,\cdot)$ is given by
\begin{equation*}
S^{E}(\boldsymbol{\rho}_{h},\btau_{h}):=\sum_{k=1}^{N_{E}^{k}}m_{i,E}(\boldsymbol{\rho}_{h})m_{i,E}(\btau_{h}),
\end{equation*}
where the set $\left\{m_{i,E}(\boldsymbol{\rho}_{h})\right\}_{i=1}^{N_{E}^{k}}$ corresponds to all the $E$-moments of $\boldsymbol{\rho}_{h}$ associated with the corresponding degrees of freedom. On the other hand, we define the computational errors as follows:
\begin{equation*}
\texttt{e}(\bsig):=\dfrac{\|\bsig-\Pi\bsig_h\|_{0,\O}}{\|\bsig\|_{0,\O}},\quad\texttt{e}(\bu):=\dfrac{\|\bu-\bu_h\|_{0,\O}}{\|\bu\|_{0,\O}},\quad\texttt{e}(p):=\dfrac{\|p-p_h\|_{0,\O}}{\|p\|_{0,\O}}.
\end{equation*}
We remark that computing the $\L^2$-error of the pressure requires a post-processing step to recover $p_h$ from $\bsig_h$ and $\bu_h$ using identity \eqref{eq:discrete_pressure}. On the other hand, the experimental rates of convergence are given by
\begin{equation*}
r(\cdot):=\displaystyle\frac{\log (\texttt{e}(\cdot)/\texttt{e}'(\cdot))}{\log(h/h')},
\end{equation*}
where $\texttt{e}$ and $\texttt{e}'$ denote the corresponding errors for two consecutive meshes with sizes $h$ and $h'$, respectively.
%%%________________________________________________________________________________________
\subsection{Square with null Dirichlet boundary condition}
We begin this section on the unit square $\O:=(-1,1)^2$. Here, we impose the Dirichlet boundary condition $\bu=\boldsymbol{0}$, with $\nu=\kappa=1$ and $\boldsymbol{\beta}:=(1,0)^t$ as physical parameters. The vector field $\boldsymbol{f}$ is chosen so that the exact solution to problem \eqref{eq:Oseen_system} is: 
$$\bu(x,y):=\left(\dfrac{\partial \phi}{\partial y},-\dfrac{\partial \phi}{\partial x}\right)\qquad\text{and } p(x,y):=x,$$
where  $\phi(x,y):=(1-x^2)^2(1-y^2)^2$.  
The convexity of the domain, together with the selected boundary condition will yield to optimal order of convergence for the method. A sample of the polygonal meshes for this test are depicted in Figure \ref{FIG:Meshes}.
\begin{figure}[H]
	\begin{center}
		\begin{minipage}{13cm}
			\centering\includegraphics[height=4.2cm, width=4.2cm]{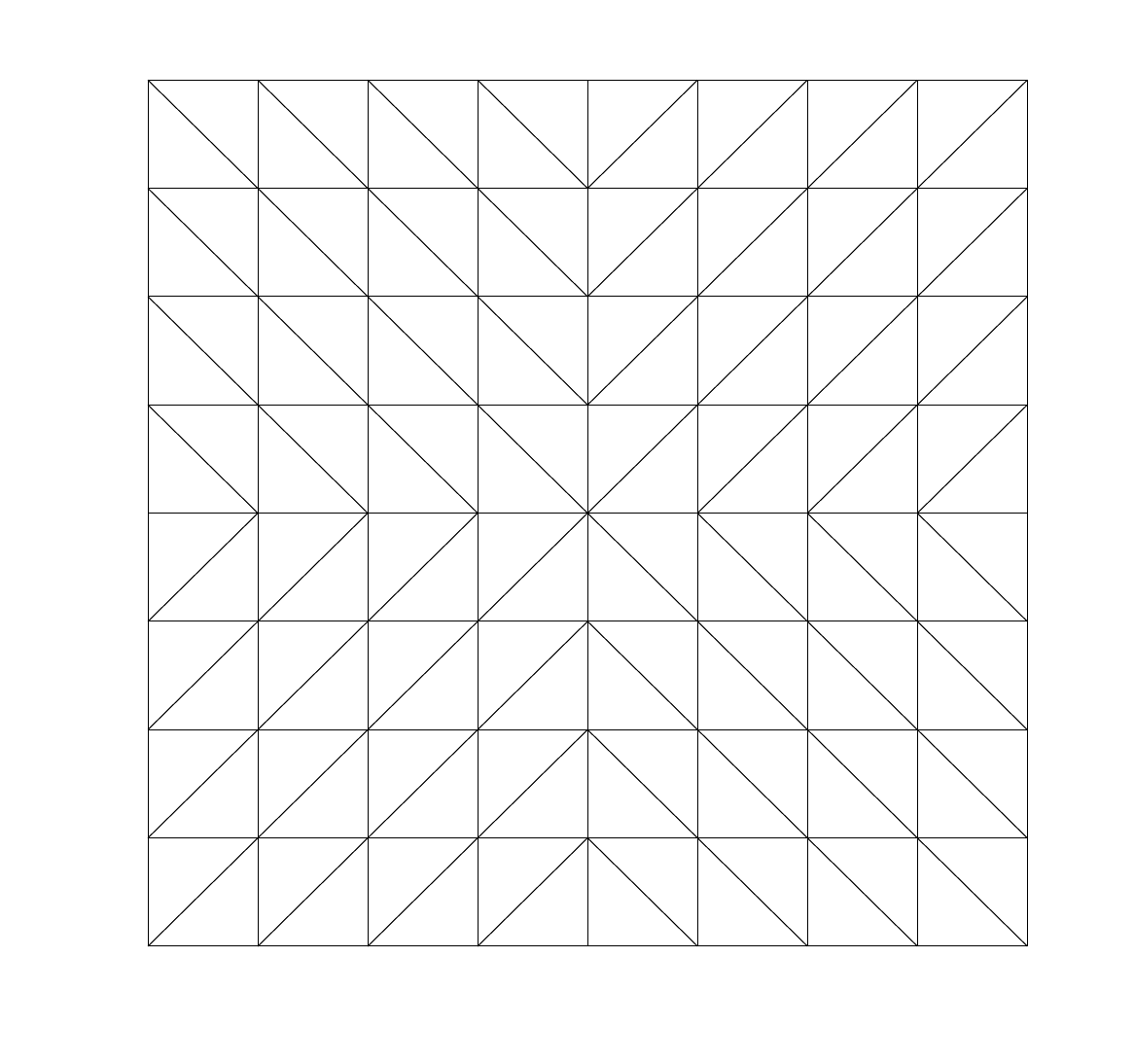}
			\centering\includegraphics[height=4.2cm, width=4.2cm]{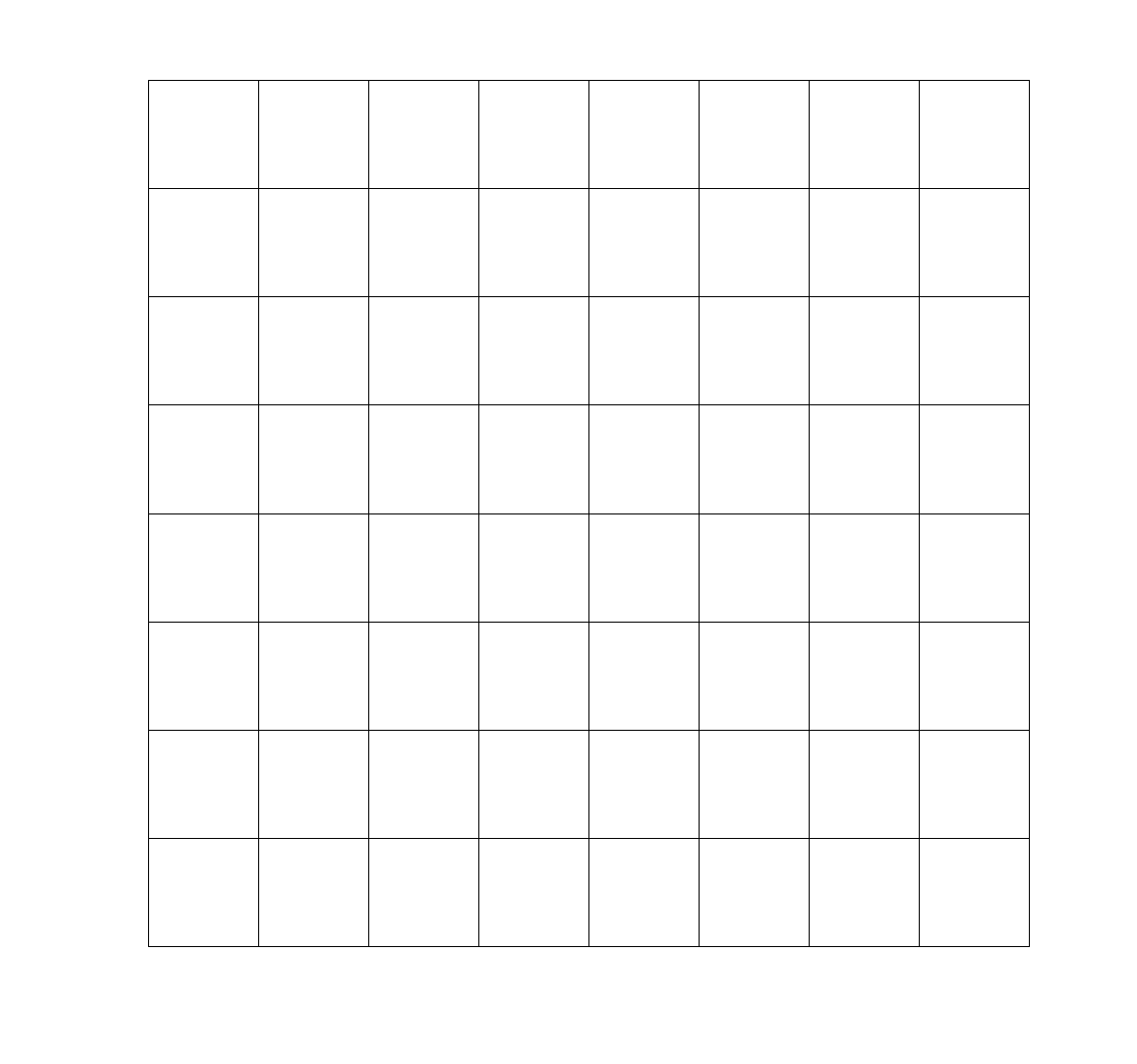}\\
			\centering\includegraphics[height=4.2cm, width=4.2cm]{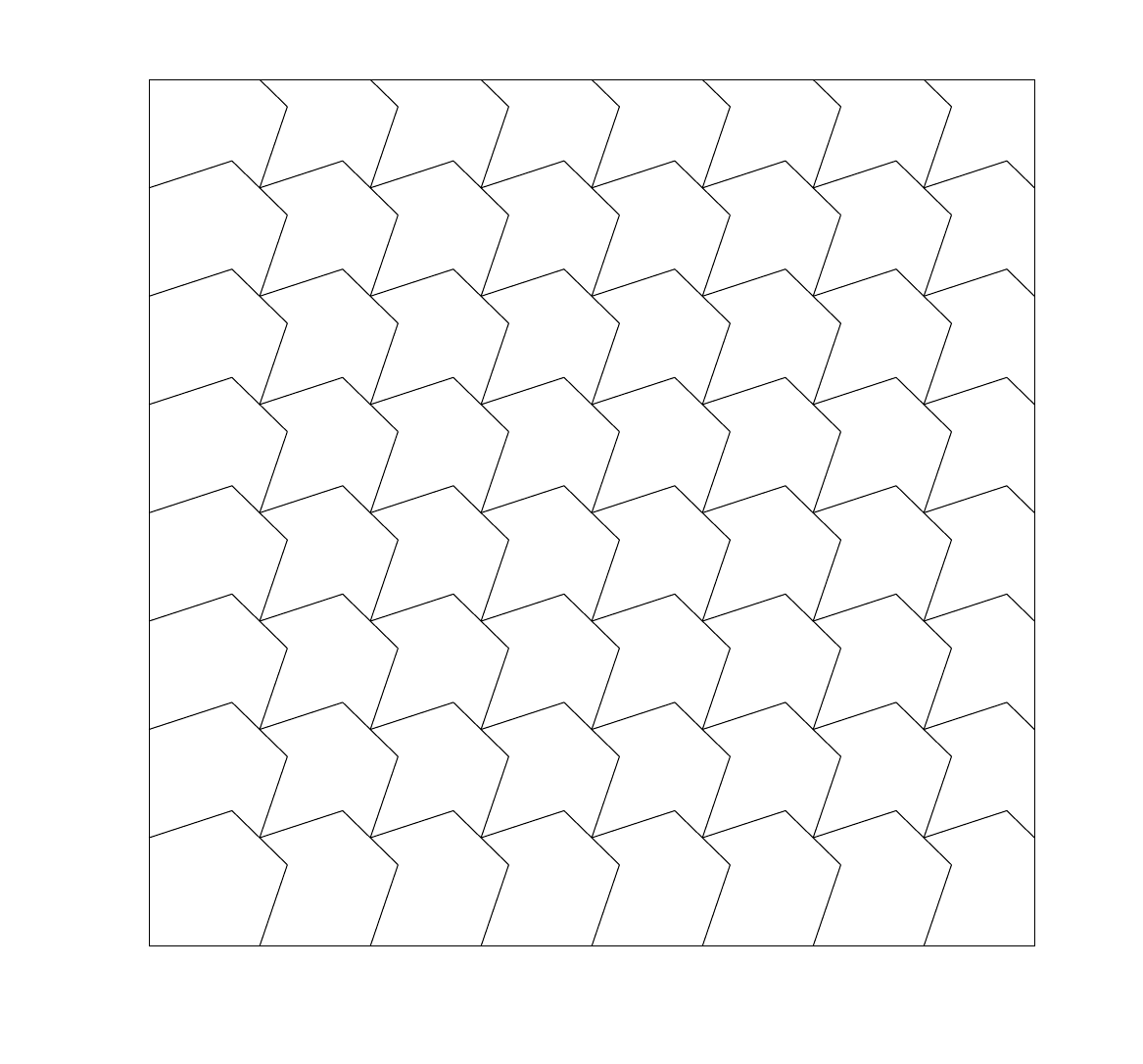}
			\centering\includegraphics[height=4.2cm, width=4.2cm]{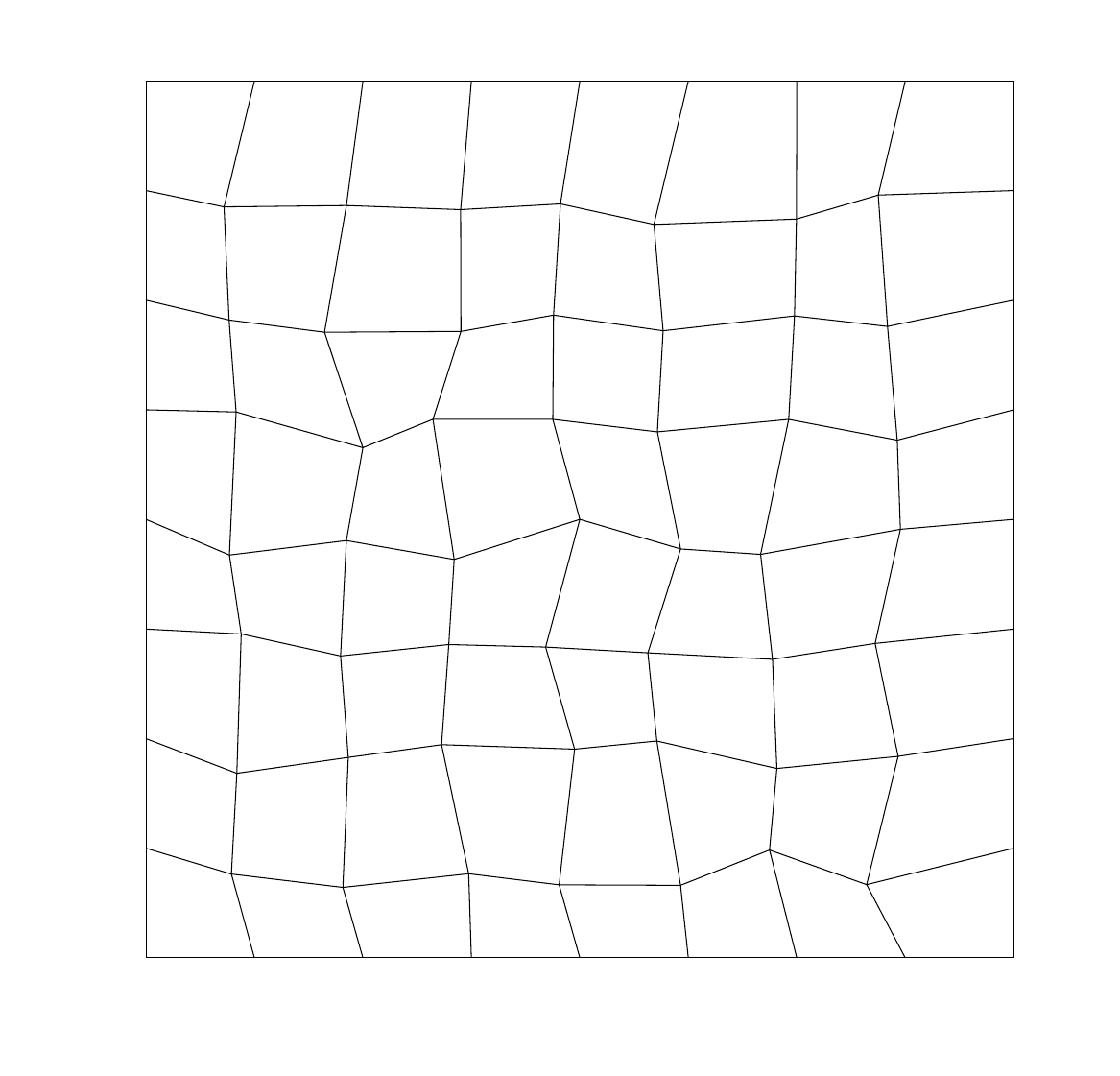}
         \end{minipage}
%         	\begin{minipage}{13cm}
%			\centering\includegraphics[height=3.5cm, width=3.5cm]{meshcdeformes.pdf}
%			\centering\includegraphics[height=3.5cm, width=3.5cm]{hexagonos.pdf}
%			\centering\includegraphics[height=3.5cm, width=3.5cm]{hexagonosde.pdf}
%			         \end{minipage}

			\caption{ Sample meshes: $\CT_{h}^{1}$ top left), $\CT_{h}^{2}$ (top middle), $\CT_{h}^{3}$ (top right),$\CT_{h}^{4}$ (bottom left), $\CT_{h}^{5}$ (bottom middle) and  $\CT_{h}^{6}$ (bottom right).}
		\label{FIG:Meshes}
	\end{center}
\end{figure}

With these meshes, we compute the convergence rates for each of the unknowns of our problem. These results are reported in Table \ref{TABLA:1}.
\begin{table}[H]
\begin{center}
\begin{footnotesize}
\caption{Test 1: Convergence history of the velocity, pressure, and pseudostress for different polygonal meshes on $\O=(0,1)^2$.}
\begin{tabular}{|c|c|c|c|c|c|c|c|c|}
\hline
\multicolumn{7}{ |c| }{$\CT_{h}^1$}\\
\hline
 $h$ & $\texttt{e}(\bu)$ & $r(\bu)$ & $\texttt{e}(\bsig)$ &$r(\bsig)$  &$\texttt{e}(p)$ & $r(p)$ \\
\hline
    0.0354 &    0.0433&    --         &  0.0615 &    --         &   0.1270& -- \\
    0.0283 &    0.0347& 1.0006 &  0.0492 &1.0021  &  0.1011 &  1.0203\\
    0.0236 &    0.0289& 1.0004 &  0.0410 &1.0015  &  0.0841 &  1.0146\\
    0.0202 &    0.0247& 1.0003 &  0.0351 &1.0012  &  0.0719 &  1.0110\\
    0.0177 &    0.0217& 1.0003 &  0.0307 &1.0009  &  0.0629 &  1.0086\\
    0.0157 &   0.0192 &1.0002  & 0.0273 &1.0007   & 0.0558  & 1.0069\\
    0.0141 &   0.0173 &1.0002  & 0.0246 &1.0006   & 0.0502  & 1.0057\\
     \hline
\multicolumn{7}{ |c| }{$\CT_{h}^2$}\\
\hline      
    0.0667  &   0.0706 &    --        &   0.0807 &     --           & 0.0363 & --\\
    0.0500  &   0.0530 &  0.9968&   0.0606 &   0.9988 &   0.0263 &  1.1203\\
    0.0400  &   0.0424 &  0.9981&   0.0485 &   0.9993 &   0.0207 &  1.0773\\
    0.0333  &   0.0353 &  0.9987&   0.0404 &   0.9995 &   0.0171 &  1.0535\\
    0.0286  &   0.0303 &  0.9991&   0.0346 &   0.9997 &   0.0145 &  1.0391\\
    0.0250  &   0.0265 &  0.9993&   0.0303 &   0.9998 &   0.0127 &  1.0297\\
    0.0222  &   0.0236 &  0.9995&   0.0269 &   0.9998 &   0.0112 &  1.0234\\
   \hline
\multicolumn{7}{ |c| }{$\CT_{h}^3$}\\
\hline
    0.0745 &    0.0729  &       --        & 0.0832 &     --         &    0.0393& --\\
    0.0559 &    0.0547  &  0.9982  & 0.0624 &  0.9989 &    0.0283&  1.1382\\
    0.0447 &    0.0437  &  0.9992  & 0.0499 &  0.9991 &   0.0222 &  1.0990\\
    0.0373 &    0.0365  &  0.9997  & 0.0416 &  0.9993 &   0.0182 &  1.0761\\
    0.0319 &    0.0312  &  0.9999  & 0.0357 &  0.9994 &   0.0155 &  1.0615\\
    0.0280 &    0.0273  &  1.0000  & 0.0312 &  0.9994 &   0.0134 &  1.0515\\
    0.0248 &    0.0243  &  1.0001  & 0.0278 &  0.9995 &   0.0119 & 1.0442\\
 \hline
\multicolumn{7}{ |c| }{$\CT_{h}^4$}\\
\hline
    0.0815  &  0.0727 &     --       & 0.0837 &      --    & 0.0514 &--\\
    0.0629  &  0.0547 & 1.1008  & 0.0628  &1.1106 & 0.0370 & 1.2740\\
    0.0499  &  0.0438 & 0.9563  & 0.0503  &0.9617 & 0.0290 & 1.0423\\
    0.0418  &  0.0364 & 1.0517  & 0.0418  &1.0361 & 0.0242 & 1.0430\\
    0.0363  &  0.0312 & 1.0863  & 0.0359  &1.0889 & 0.0209 & 1.0113\\
    0.0313  &  0.0273 & 0.8908  & 0.0314  &0.8933 & 0.0181 & 0.9760\\
    0.0283  &  0.0243 & 1.1719  & 0.0279  &1.1571 & 0.0161 & 1.1488\\
 \hline
\end{tabular}
\label{TABLA:1}
\end{footnotesize}
\end{center}
\end{table}

For the lowest order of approximation that we are considering, we observe from table \ref{TABLA:1} that each of the unknowns is approximated with convergence rate $\mathcal{O}(h)$, independent of the polygonal mesh under consideration, confirming the robustness of the virtual element scheme with respect to the geometry involved on the meshes. Since the manufactured solutions are smooth on $\O$, these optimal orders are expected. The error curves on Figure \ref{fig:curve_plots_square} gives us a visual appreciation of the convergence history for the velocity, pressure, and pseudostress, computed with the meshes on Table \ref{TABLA:1}.

\begin{figure}[H]
	\begin{center}
		\begin{minipage}{13cm}
			\centering\includegraphics[height=5.2cm, width=5.2cm]{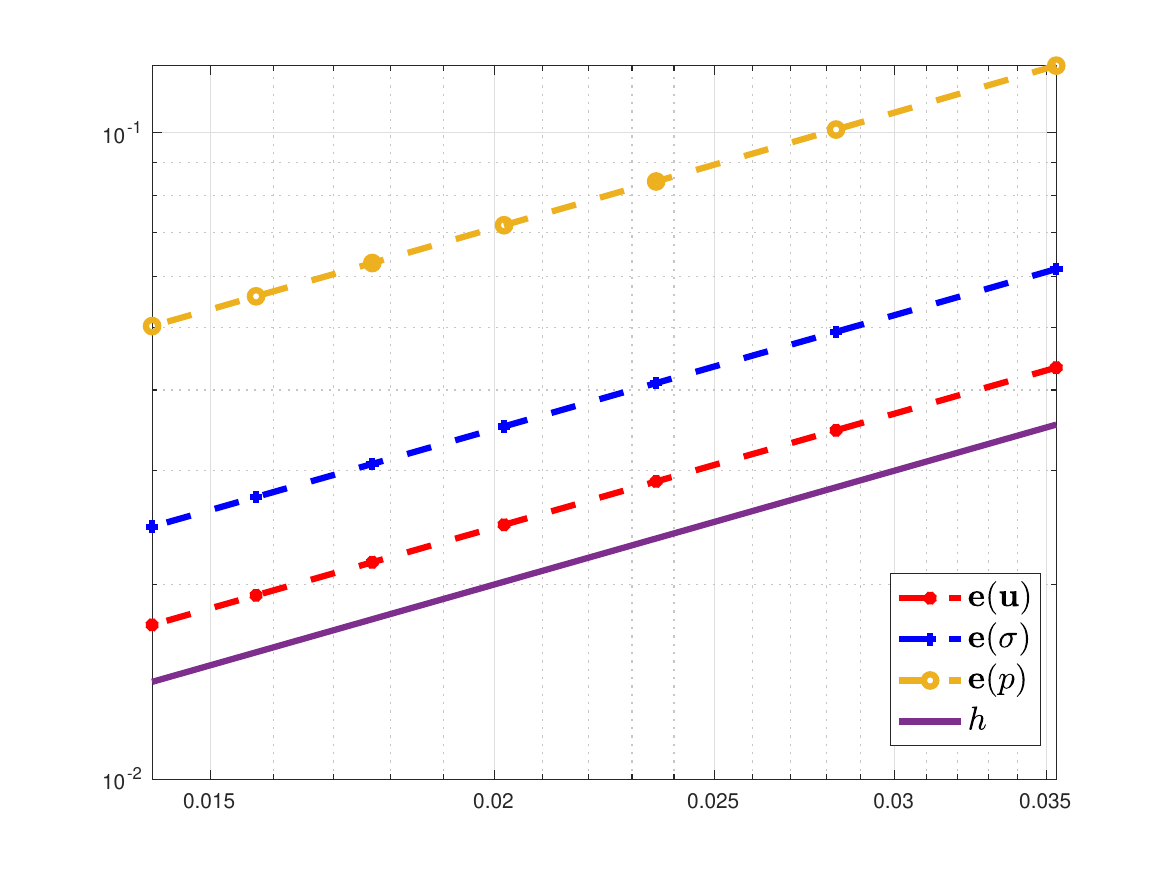}\hspace{1.4cm}
			\centering\includegraphics[height=5.2cm, width=5.2cm]{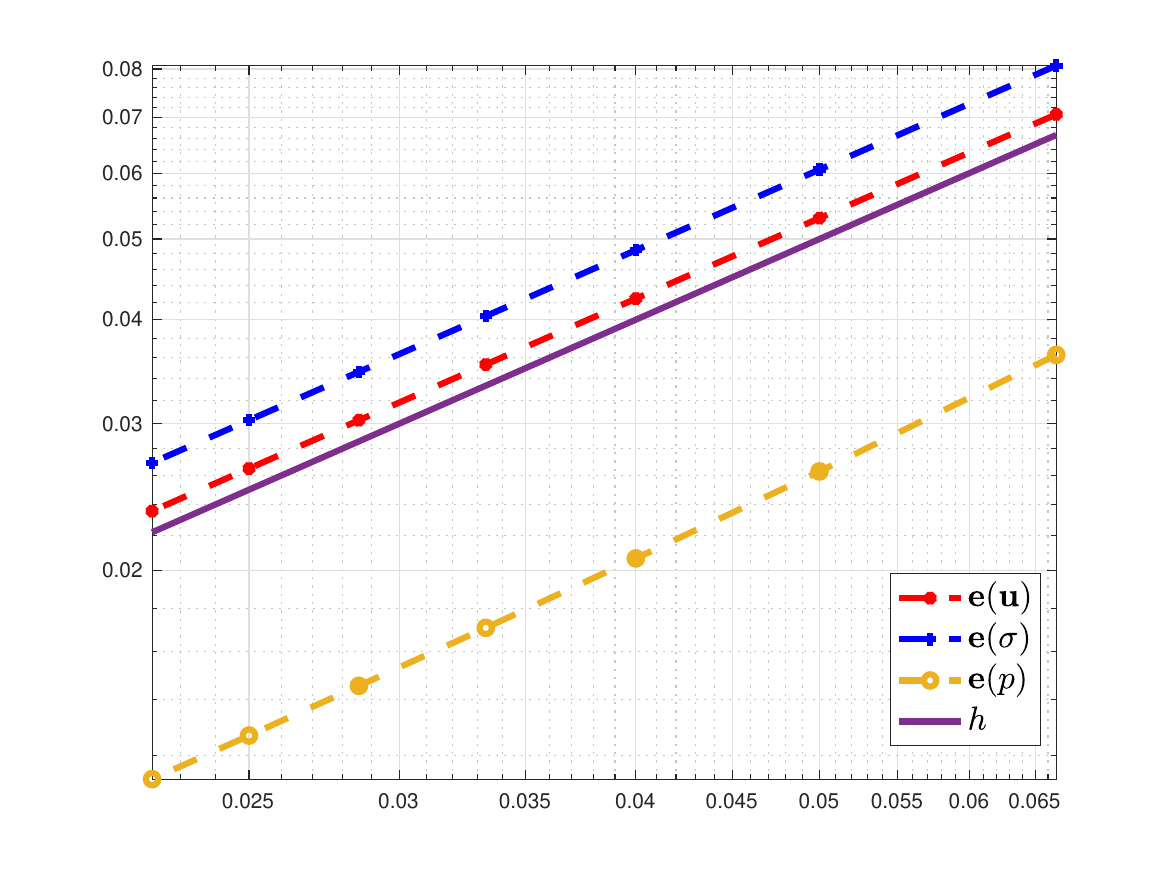}\\
			\centering\includegraphics[height=5.2cm, width=5.2cm]{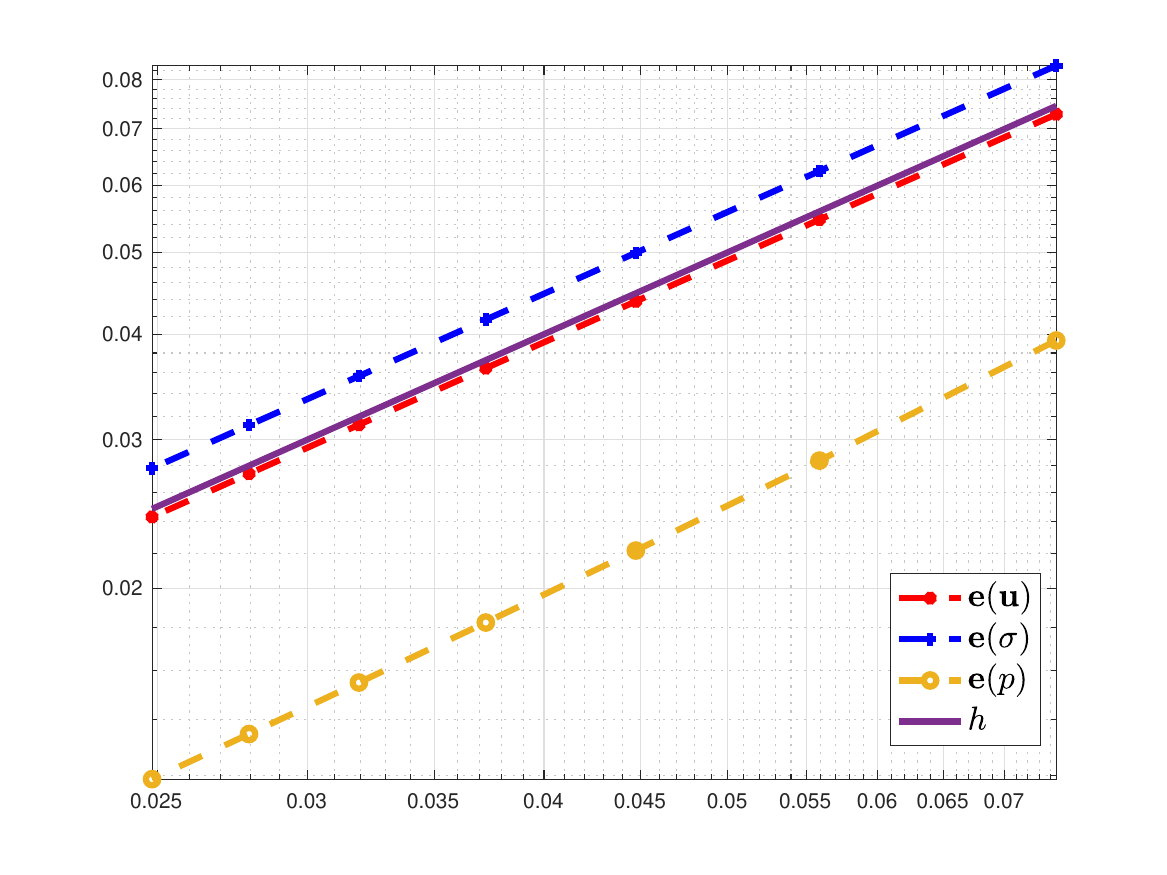}\hspace{1.4cm}
       	\centering\includegraphics[height=5.2cm, width=5.2cm]{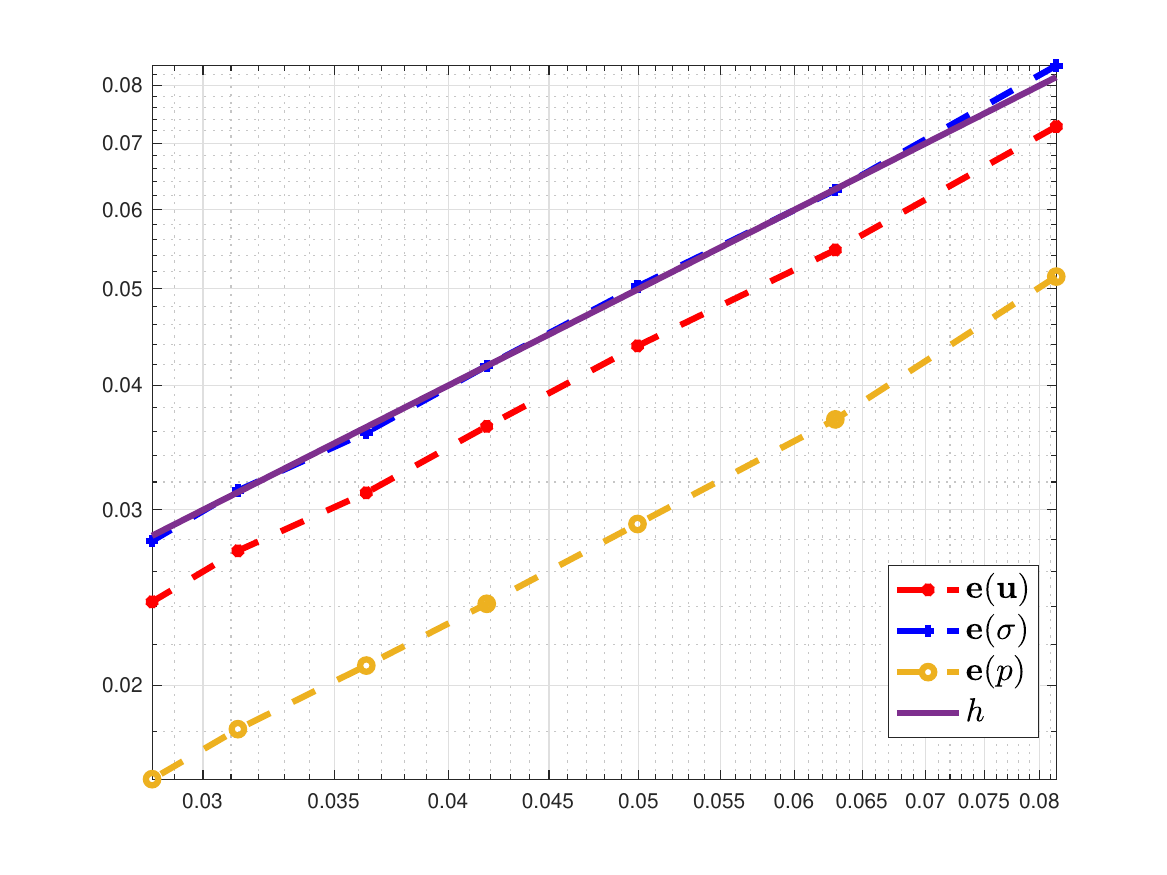}
		%	centering\includegraphics[height=5cm, width=5cm]{wh4L.pdf}
         \end{minipage}
     				\caption{Test 1. Error curves for the velocity, pseudostress, and pressure, computed with meshes $\CT_{h}^1$, $\CT_{h}^2$, $\CT_{h}^3$, and $\CT_h^4$, respectively. We observe the optimal order of convergence on each case: in red we present the error curve of the velocity,  in blue for the pseudostress, and in yellow the error of the pressure.}
		\label{fig:errortriangulos}
	\end{center}
\end{figure}
We also present plots for the magnitude of the velocity, the pressure fluctuation, and the components of the pseudostress tensor, all computed with mesh $\CT_h^3$ of non-convex polygons.
\begin{figure}[H]
	\begin{center}
		\begin{minipage}{13cm}
			\centering\includegraphics[height=5.2cm, width=5.2cm]{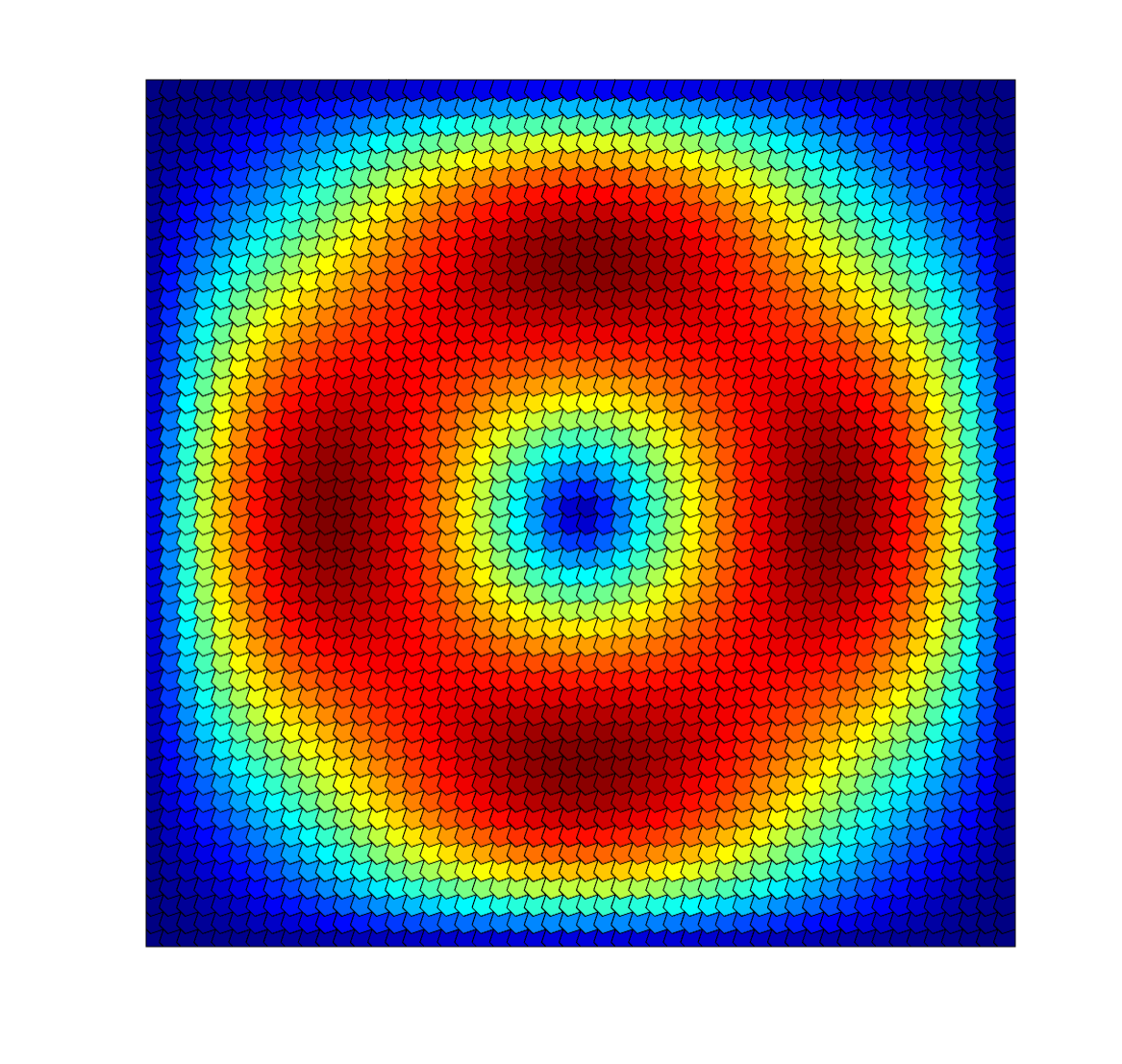}\hspace{1.4cm}
			\centering\includegraphics[height=5.2cm, width=5.2cm]{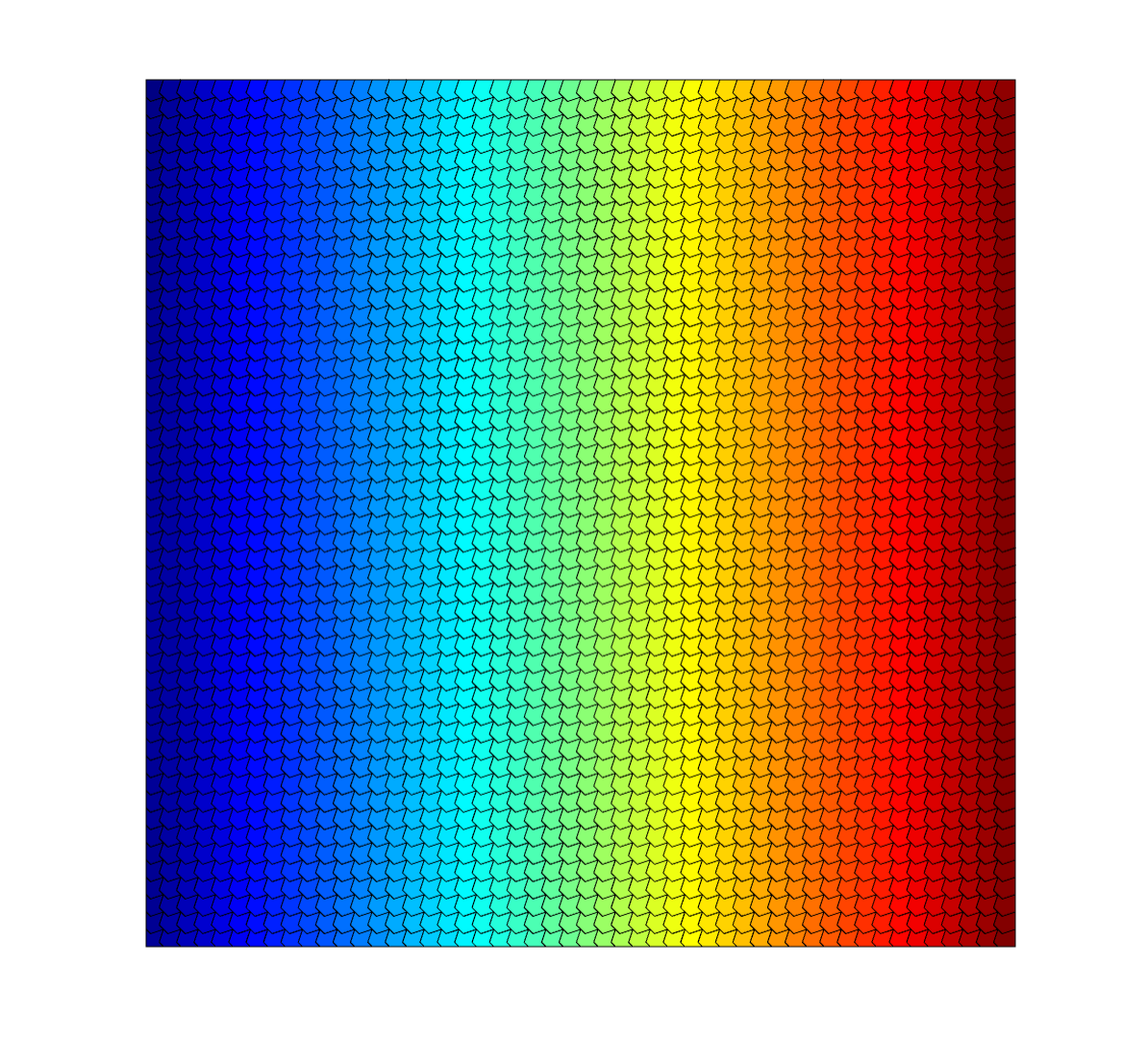}\\
			         \end{minipage}
     				\caption{Test 1: Left: plot of the computed velocity magnitude $\bu_h$ with mesh $\CT_h^3$. Right: plot of the  computed pressure fluctuation  $p_h$ with mesh $\CT_h^3$.}
		\label{fig:curve_plots_square}
	\end{center}
\end{figure}
\begin{figure}[H]
	\begin{center}
		\begin{minipage}{13cm}
			\centering\includegraphics[height=5.2cm, width=5.2cm]{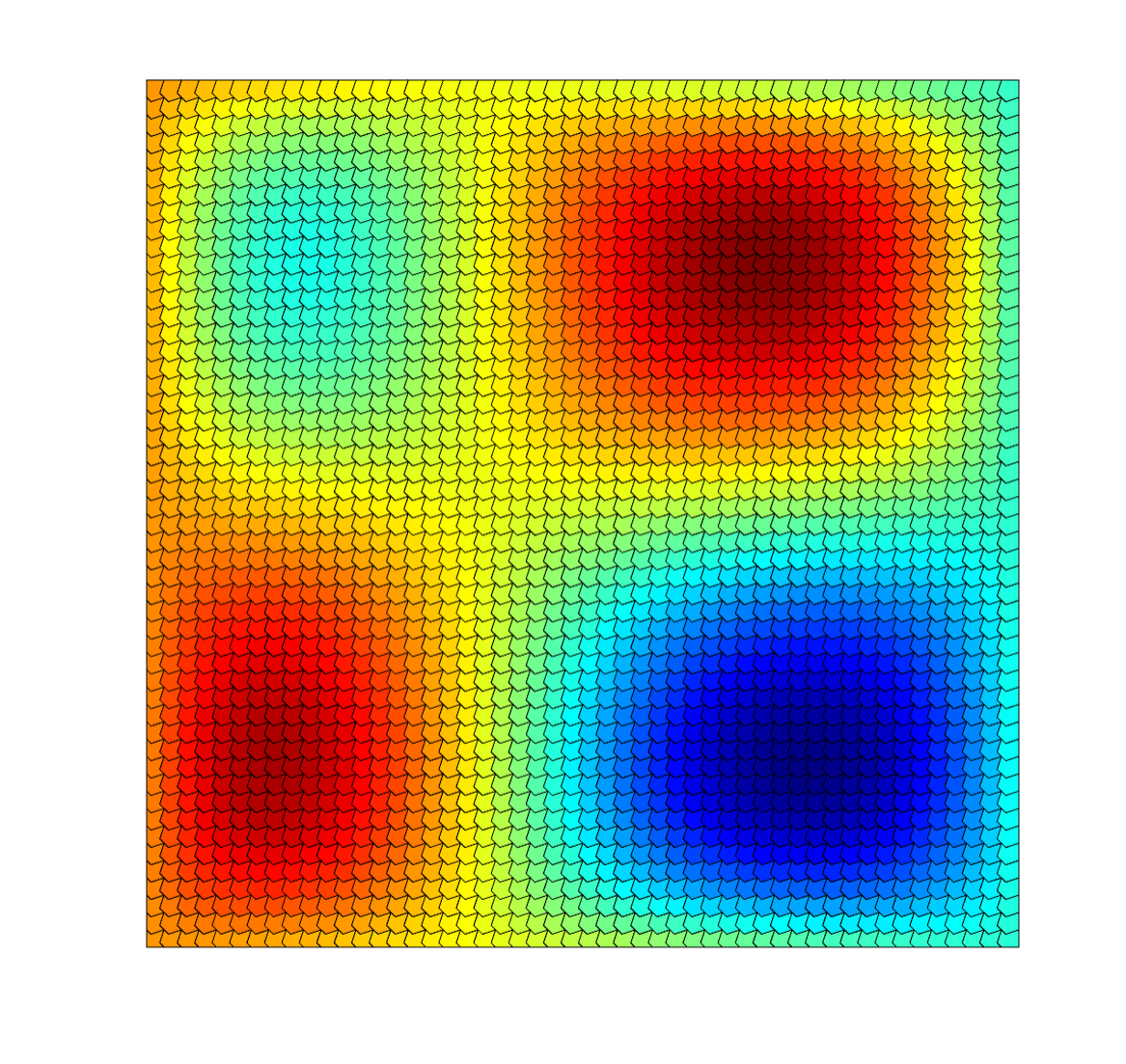}\hspace{1.4cm}
       	\centering\includegraphics[height=5.2cm, width=5.2cm]{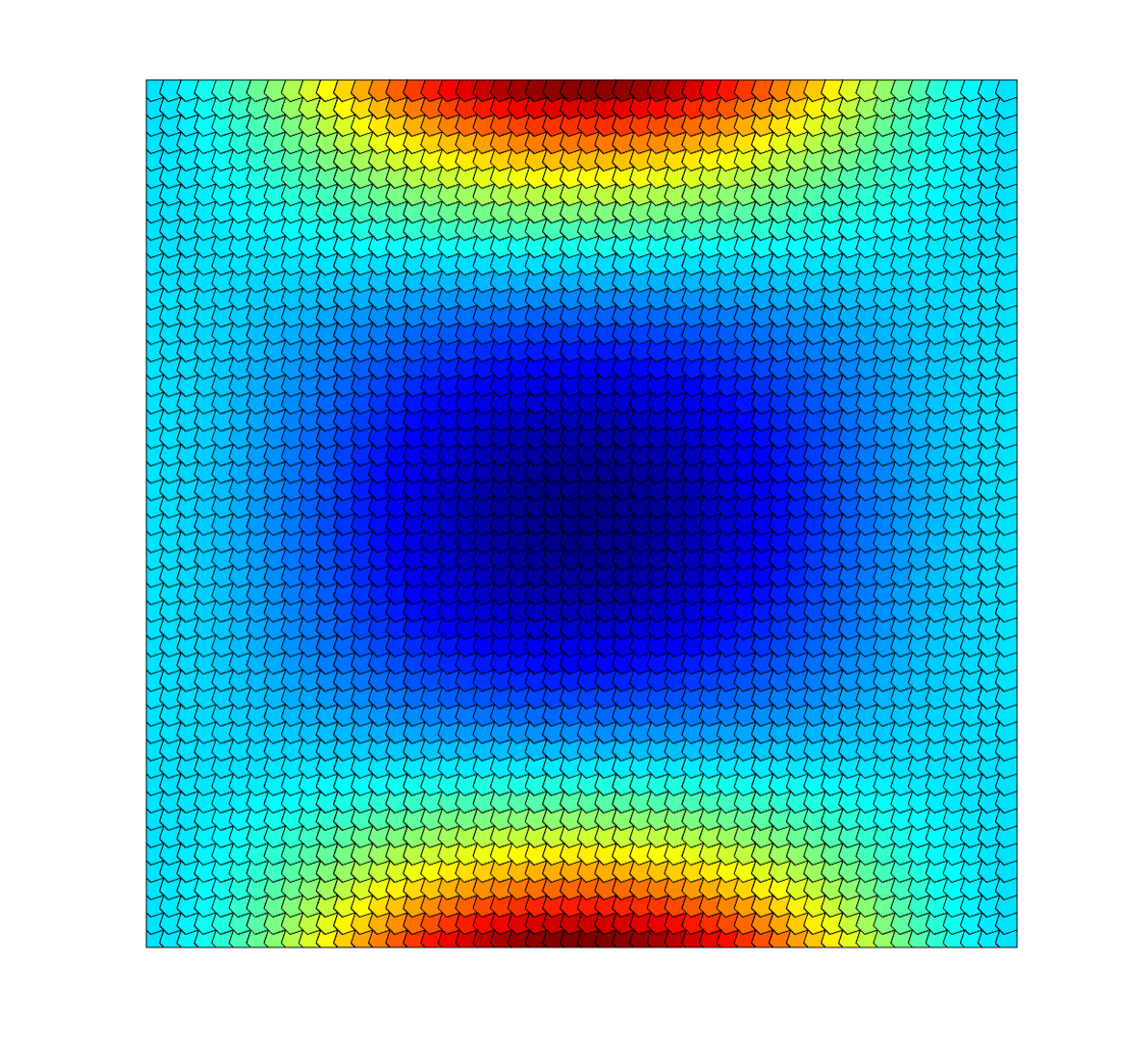}
	\centering\includegraphics[height=5.2cm, width=5.2cm]{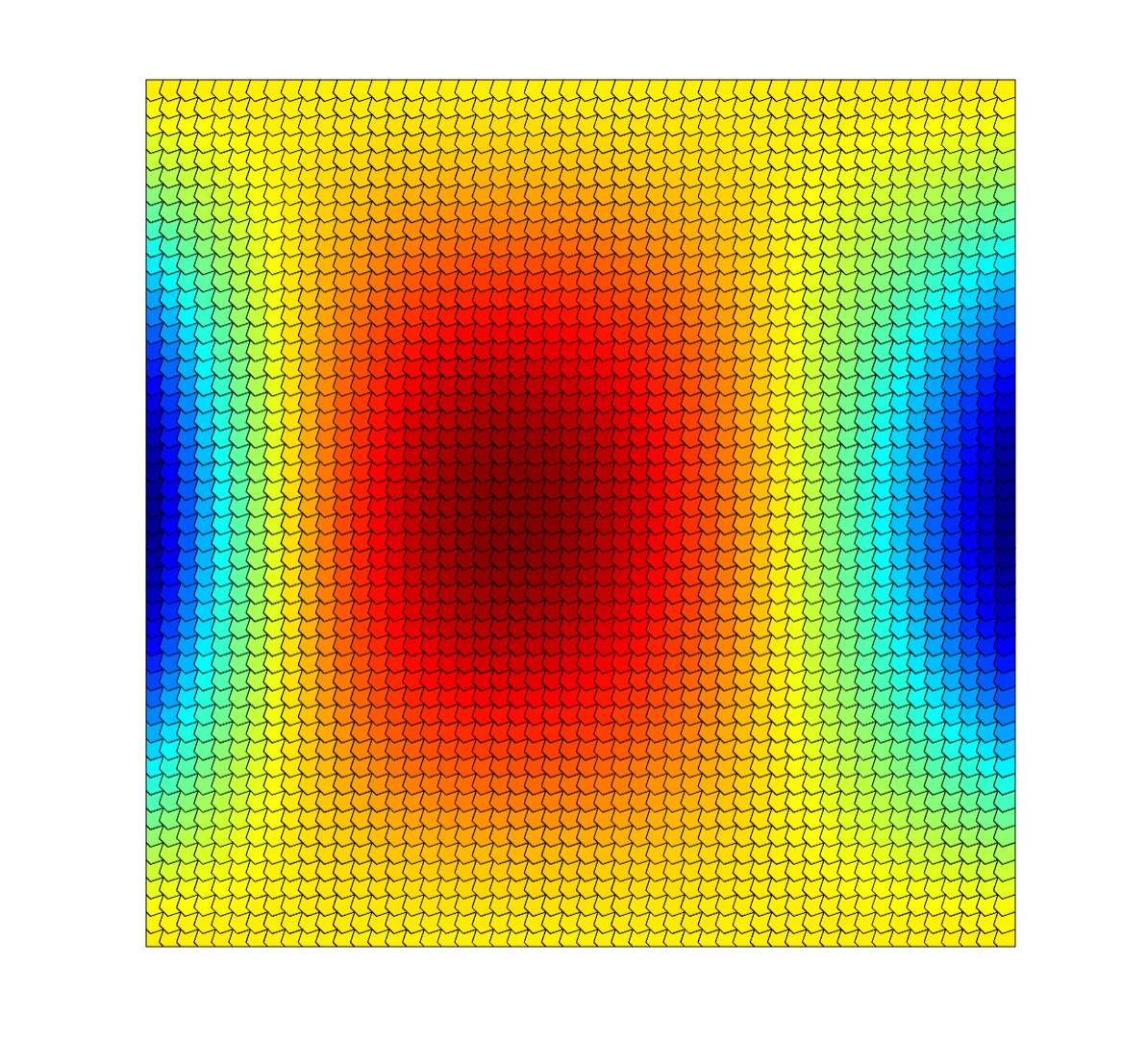}\hspace{1.4cm}
       	\centering\includegraphics[height=5.2cm, width=5.2cm]{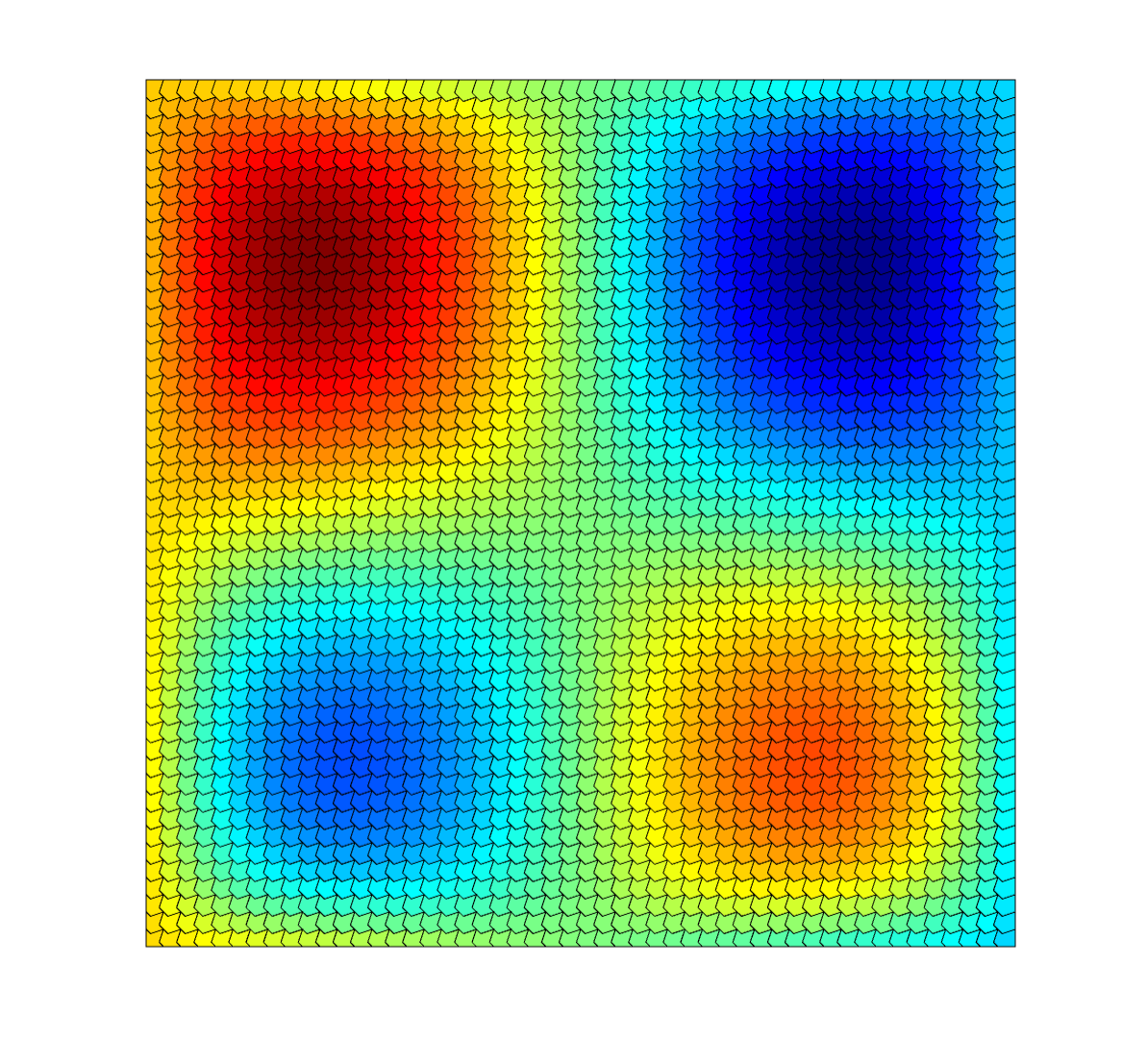}

		%	centering\includegraphics[height=5cm, width=5cm]{wh4L.pdf}
         \end{minipage}
     				\caption{Test 1: Computed pseudostress tensor components with $\CT_h^4$. Top row,  from left to right: components $\boldsymbol{\sigma}_{h,11}$ and  $\boldsymbol{\sigma}_{h,12}$. Bottom row,  from left to right:  $\boldsymbol{\sigma}_{h,21}$ and $\boldsymbol{\sigma}_{h,22}$.}
		\label{fig:curve_plots_square}
	\end{center}
\end{figure}
\subsection{Convection-dominated regime} 
\label{test:Test 2}
In the following test, we analyze the behavior of our method in the regime where the physical parameter $\nu$ tends to small values. To this end, we consider the following problem configuration:
$\O:=(0,1)^2$ with the boundary condition $\bu=0$ on $\partial\O$. We consider the following physical constants:  $\kappa=1$, $\boldsymbol{\beta}:=(1,1)^t$.  The forcing term $\boldsymbol{f}$ is chosen in such a way that the exact velocity and pressure solve  problem  \eqref{eq:Oseen_system}. These are given by
$$\bu(x,y)=\left(2x^2y(2y-1)(x-1)^2(y-1),-2xy^2(2x-1)(x-1)(y-1)^2\right)\qquad\text{and }$$ $$p(x,y)=2\cos(x)\sin(y)-2\sin(1)(1-\cos(1).$$
We study the performance of the method for the following values of viscosity $\nu=\left\{1,10^{-1},10^{-2},10^{-3},10^{-4}\right\}$  on different mesh families. %Moreover, the velocity field $\bu$ exhibits a boundary layer along the lines  $x=1$ and $y=1$.
 For this example, we define the following relative error:
 $$\vertiii{\texttt{e}}_{\star}:=\dfrac{\vertiii{(\bu-\bu_h,\bsig-\bsig_h,p-p_h)}_{\star} }{\vertiii{(\bu,\bsig,p)}_{\star}},$$
 where $\vertiii{(\bu,\bsig , p)}_{\star}^2:=\|\bu\|_{0,\O}^2+\|\bsig\|_{0,\O}^2+\|p\|_{0,\O}^2$.
 
 \begin{figure}[H]
%	\begin{center}
	%	\begin{minipage}{14cm}
			\centering\includegraphics[height=4.2cm, width=4.2cm]{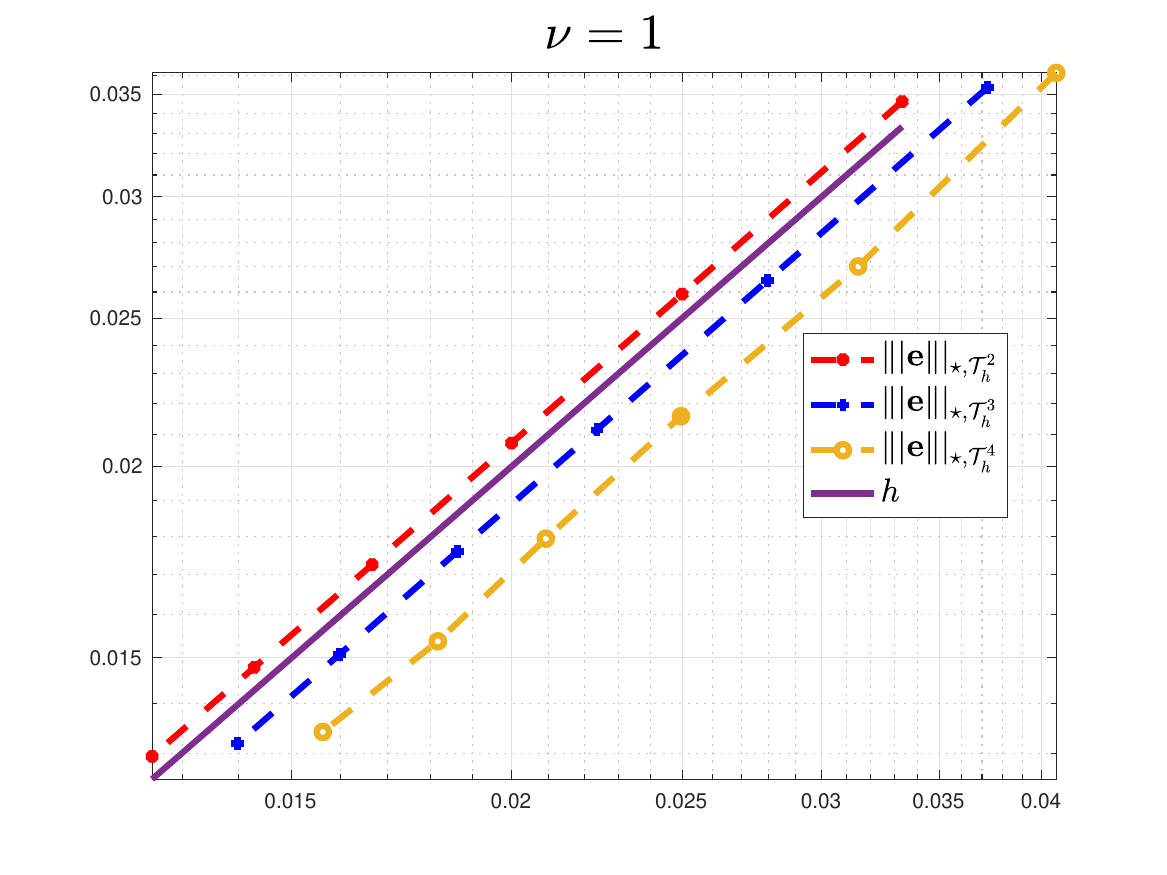}
       	\centering\includegraphics[height=4.2cm, width=4.2cm]{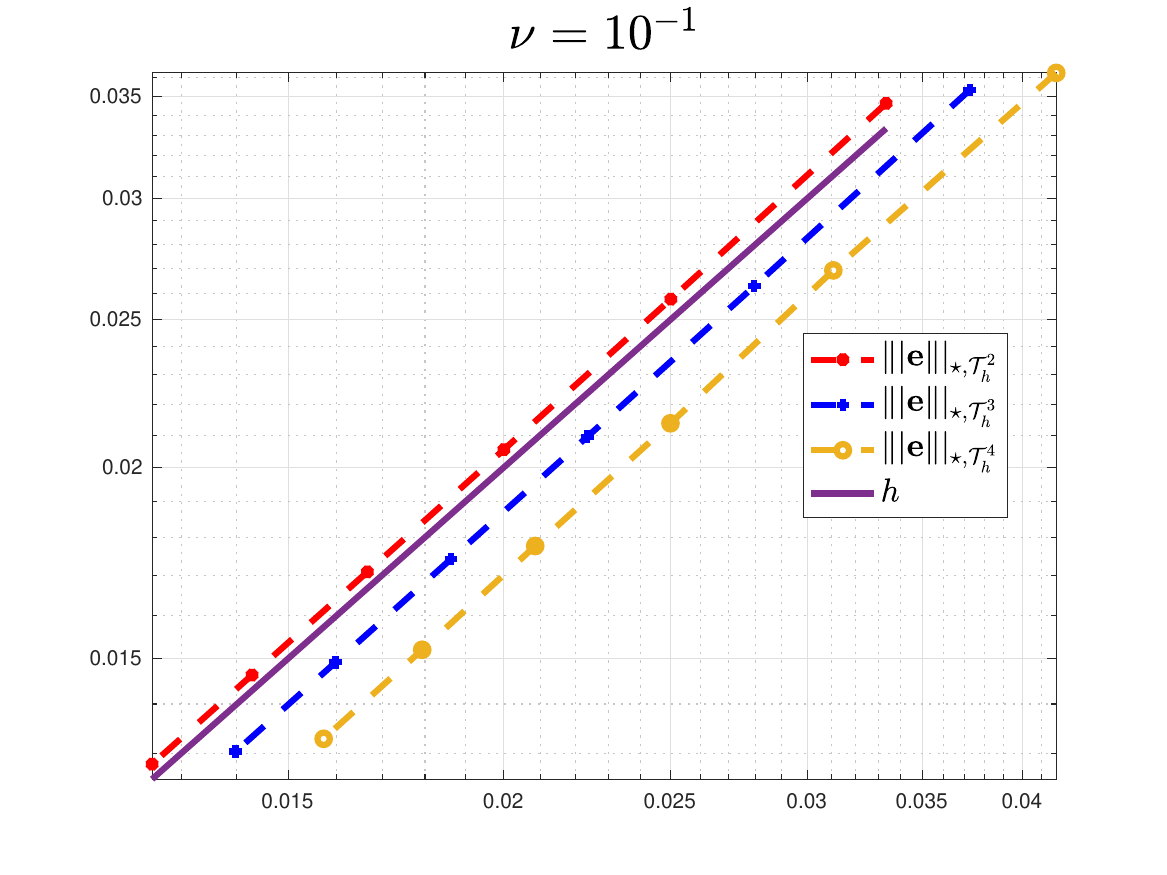}
	\centering\includegraphics[height=4.2cm, width=4.2cm]{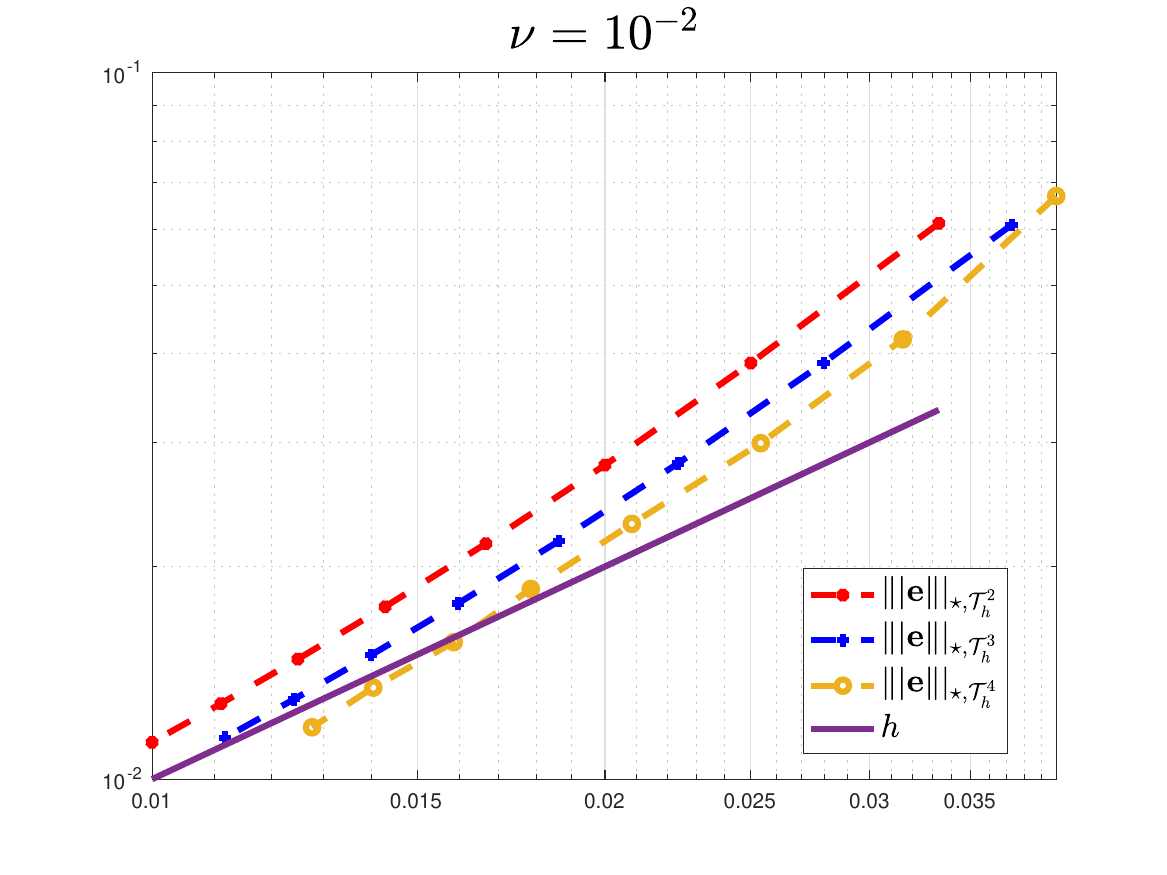}
	       	\centering\includegraphics[height=4.2cm, width=4.2cm]{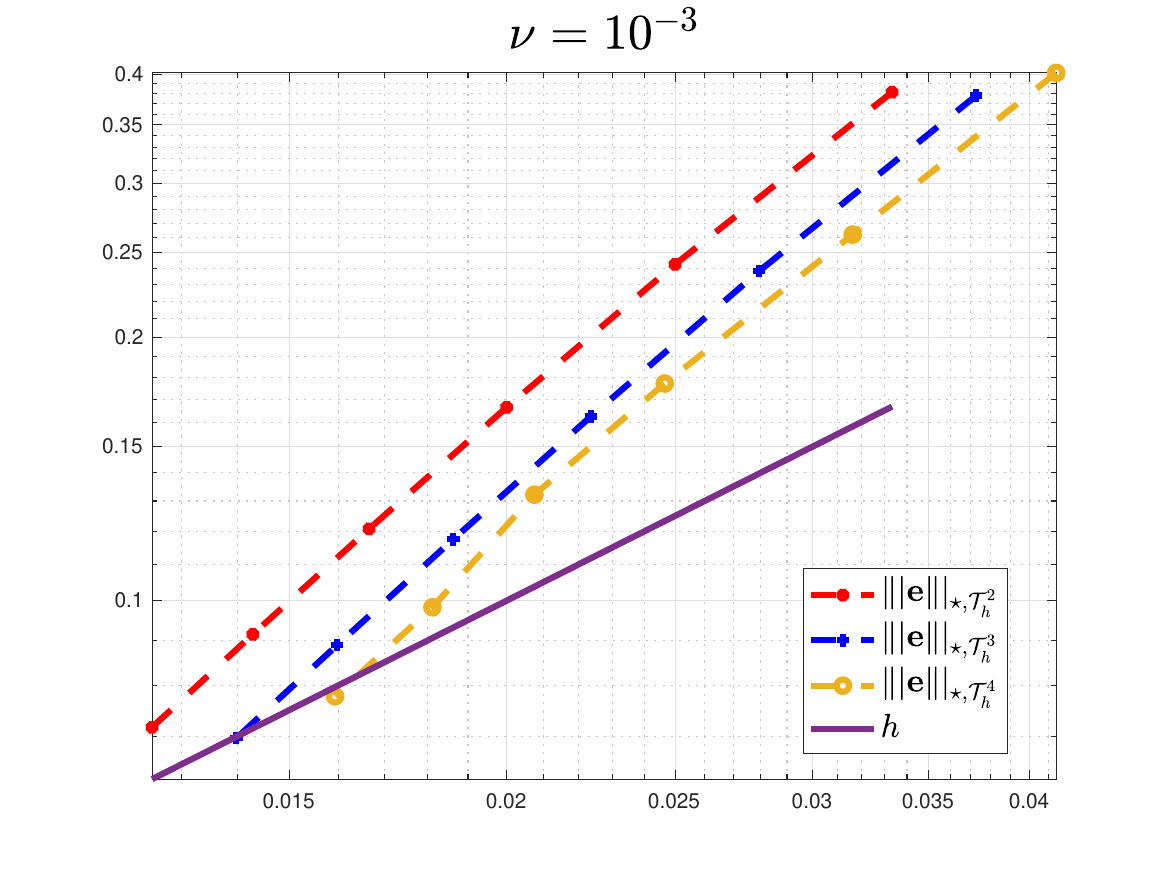}
	\centering\includegraphics[height=4.2cm, width=4.2cm]{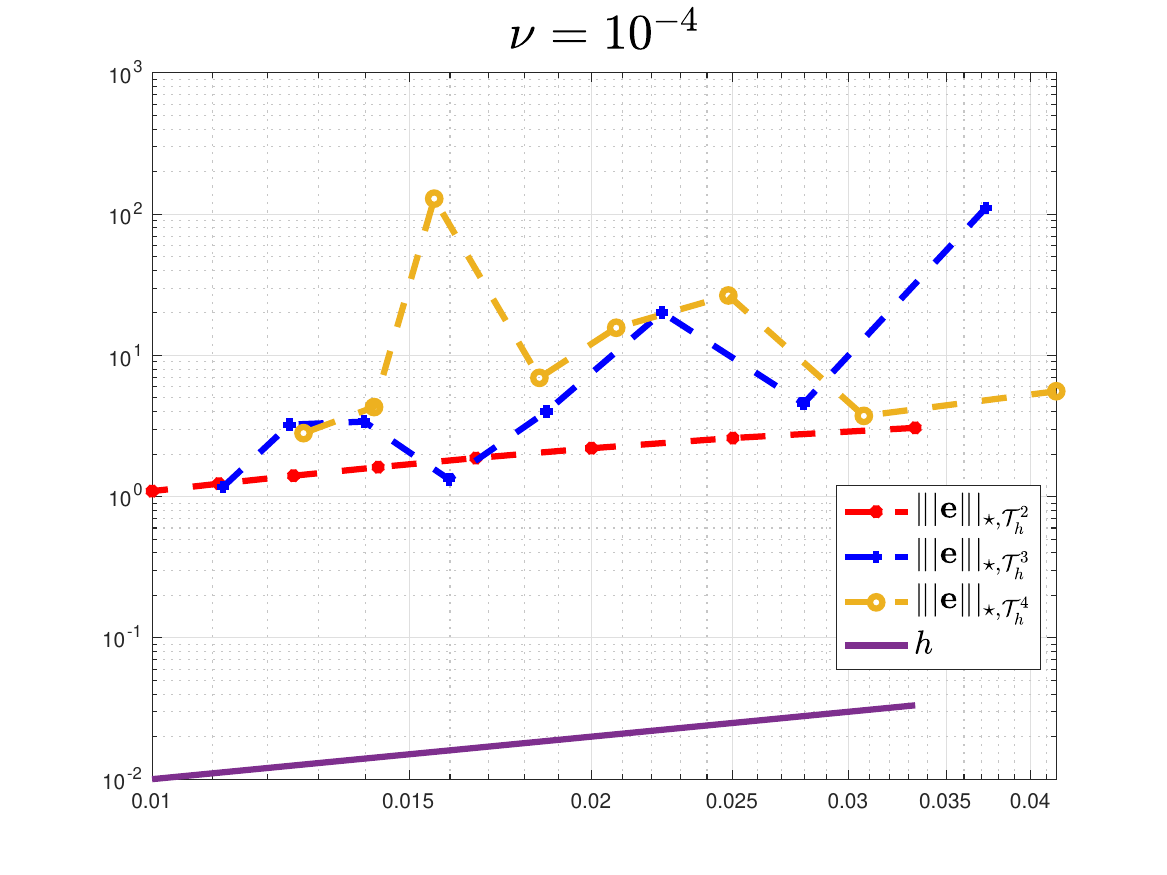}

		%	centering\includegraphics[height=5cm, width=5cm]{wh4L.pdf}
        % \end{minipage}
     				\caption{Test 2: Error behavior for different values of $\nu$ and different mesh families. In red, the error using $\CT_h^2$, in blue the error computed with $\CT_h^3$, and in yellow, the error computed with $\CT_h^4$. }
		\label{fig:curve_test2_1}
%	\end{center}
\end{figure}
As shown in Figure \ref{fig:curve_test2_1}, when the parameter $\nu$ becomes small compared to $\kappa$ and $\|\boldsymbol{\beta}\|_{\infty,\O}$, the method exhibits unstable behavior. Moreover, it is observed to be more robust on square meshes. This motivates a further investigation of its performance on this type of mesh. In Figure \ref{fig:curve_test2},  we report the error curves  where a degradation of the convergence order is observed as $\nu$ decreases for the error $\vertiii{\texttt{e}}_{\star}$.
 \begin{figure}[H]
	\begin{center}
		\begin{minipage}{10cm}
			\centering\includegraphics[height=5.0cm, width=8.2cm]{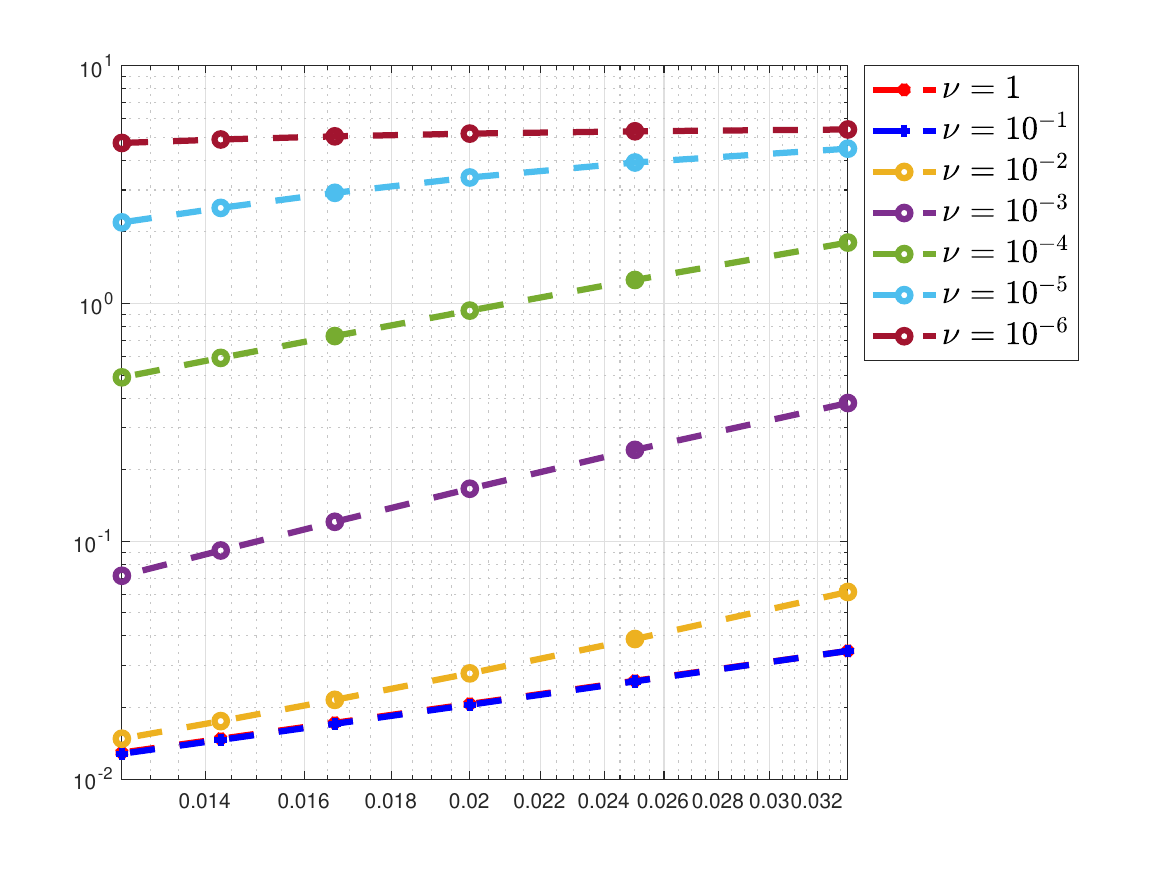}\hspace{1.4cm}
       %	\centering\includegraphics[height=5.2cm, width=5.2cm]{nu101.pdf}
%	\centering\includegraphics[height=5.2cm, width=5.2cm]{nu102.pdf}\hspace{1.4cm}
   %    	\centering\includegraphics[height=5.2cm, width=5.2cm]{nu103.pdf}
%	\centering\includegraphics[height=5.2cm, width=5.2cm]{nu104.pdf}

		%	centering\includegraphics[height=5cm, width=5cm]{wh4L.pdf}
         \end{minipage}
     				\caption{Test 2: Error behavior for different values of $\nu$ with $\CT_h^2$.}
		\label{fig:curve_test2}
	\end{center}
\end{figure}
\subsection{Test 3: Permeability study}
In this test,  we now fix $\nu=1$  and $\boldsymbol{\beta}=(1,1)^{t}$, whereas  the parameter  $\kappa$ changes, considering the values 
$\kappa=\{1,10^2,10^4,10^6\}$, in order to assess the performance of our method with respect to this physical parameter. We use the same solution as in Test 2.
 \begin{figure}[H]
%	\begin{center}
	%	\begin{minipage}{14cm}
			\centering\includegraphics[height=4.2cm, width=4.2cm]{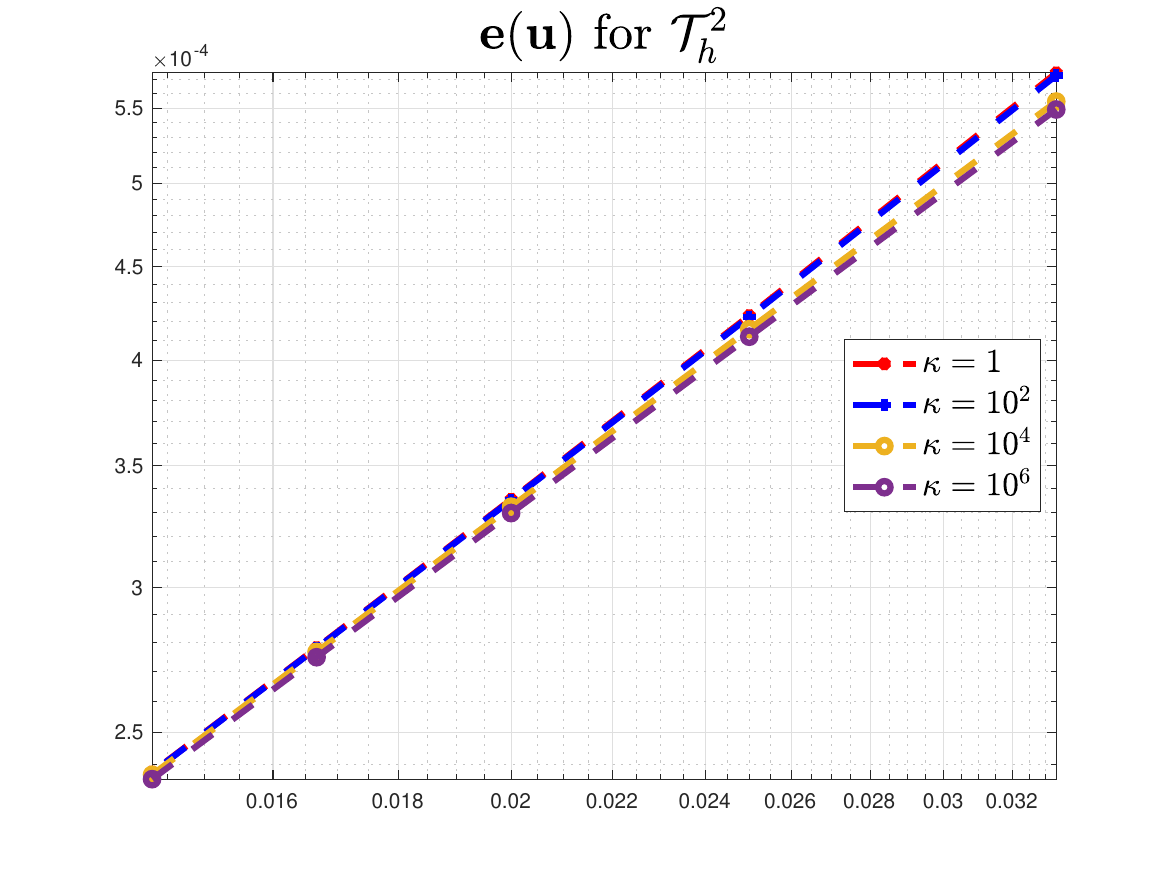}
       	\centering\includegraphics[height=4.2cm, width=4.2cm]{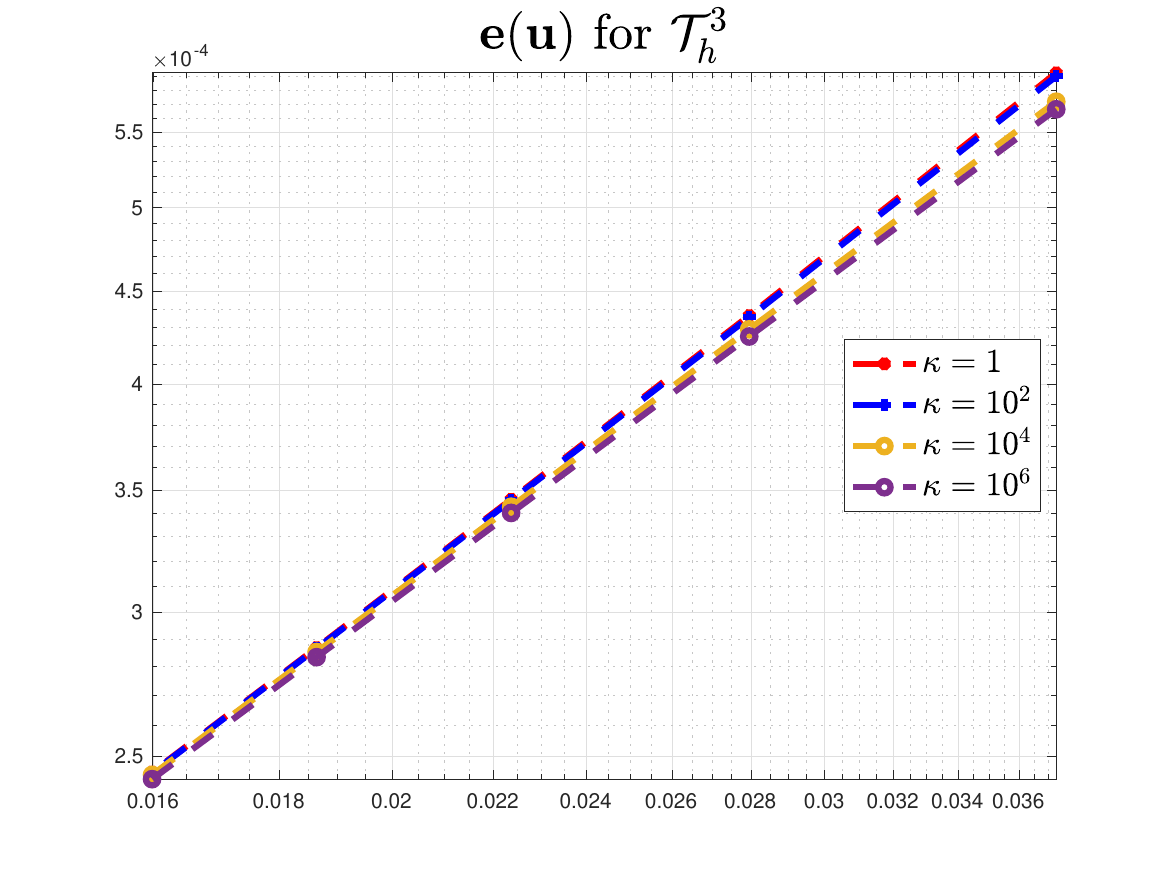}
	\centering\includegraphics[height=4.2cm, width=4.2cm]{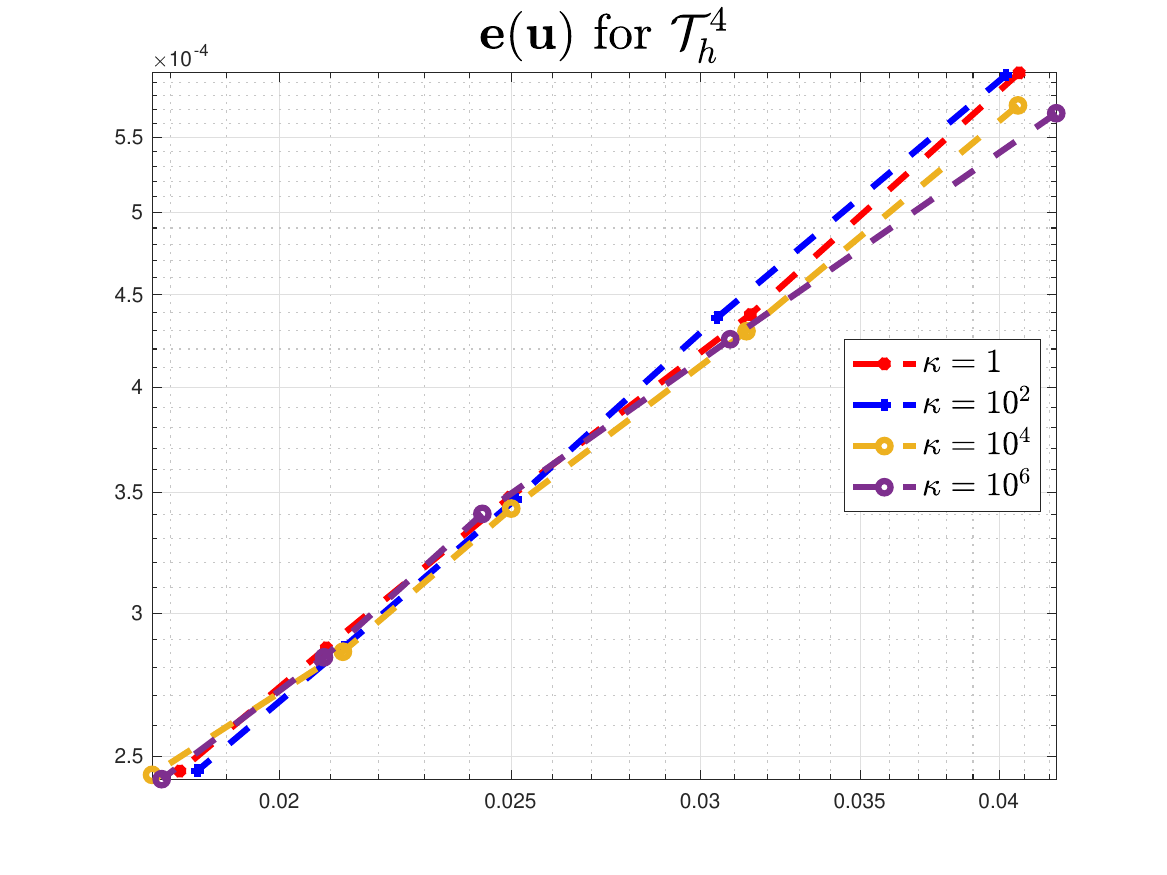}\\
	       	\centering\includegraphics[height=4.2cm, width=4.2cm]{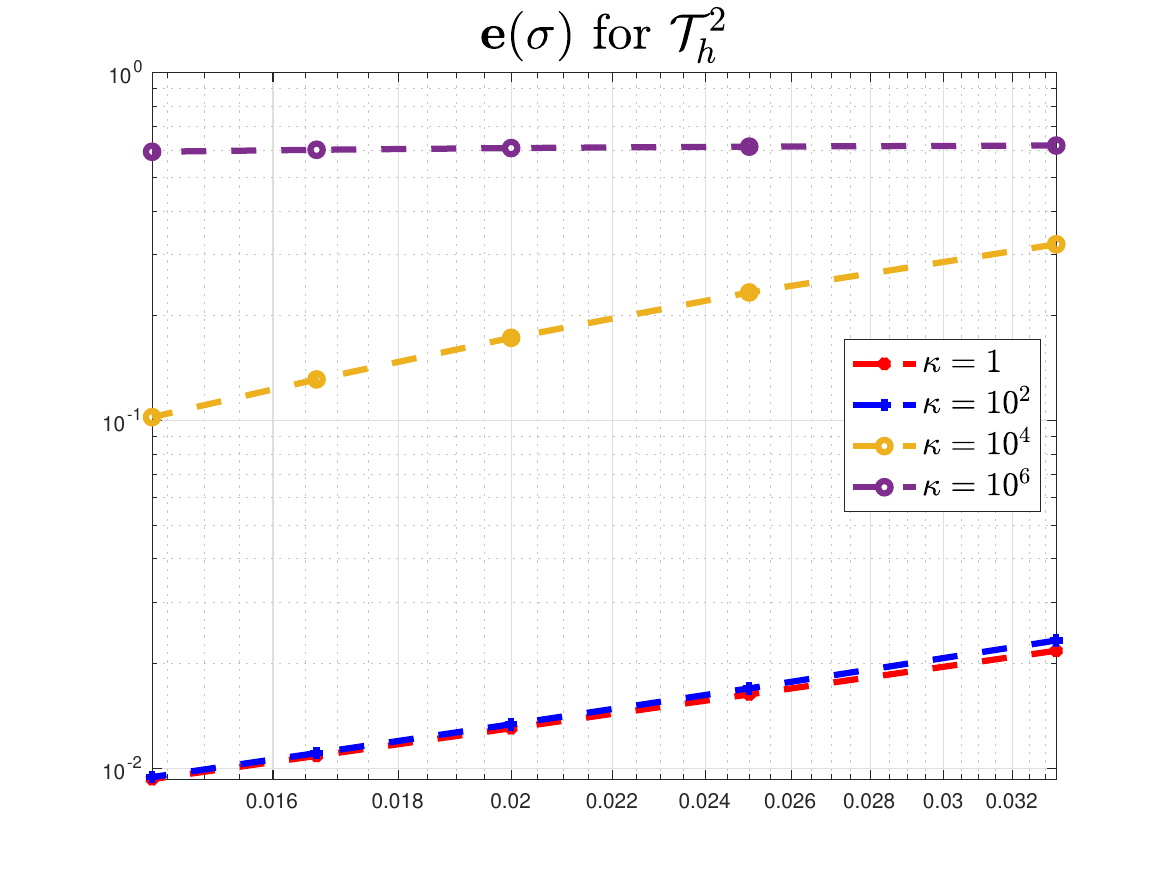}
	\centering\includegraphics[height=4.2cm, width=4.2cm]{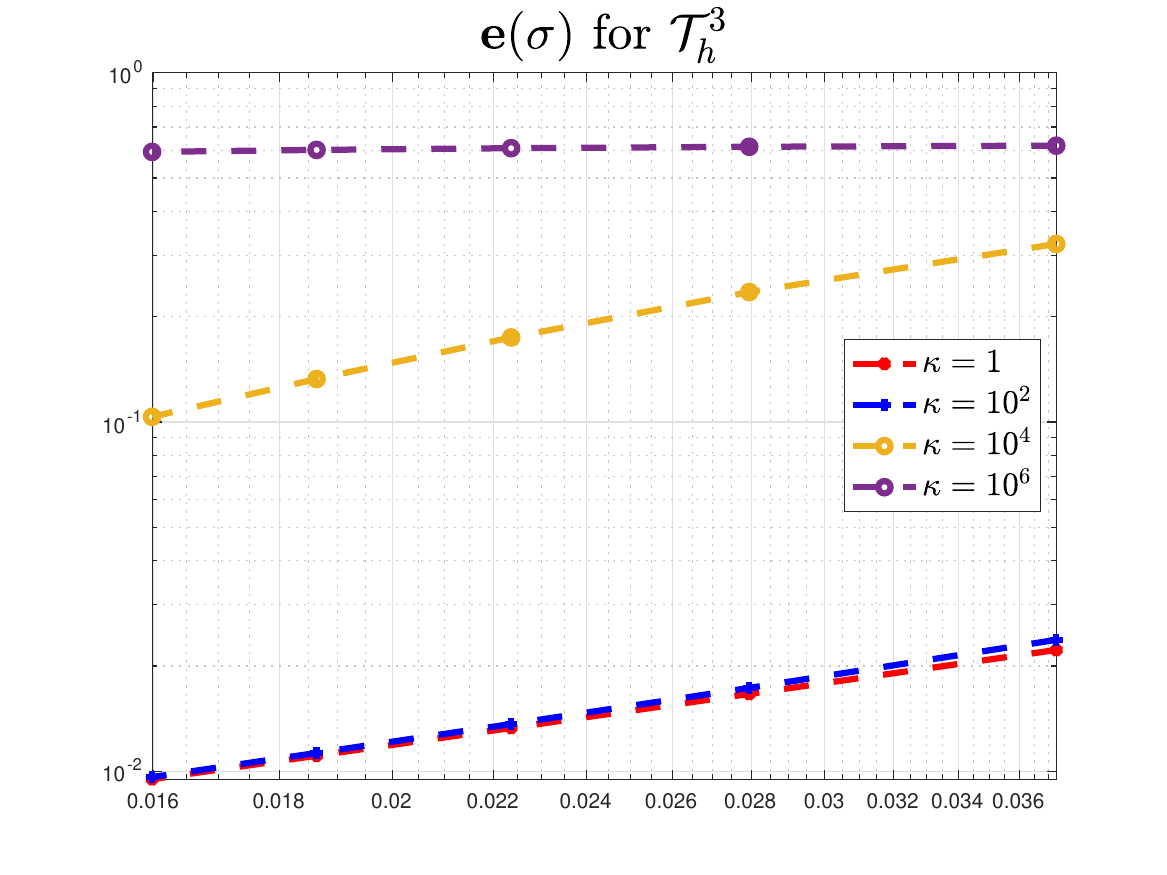}
	\centering\includegraphics[height=4.2cm, width=4.2cm]{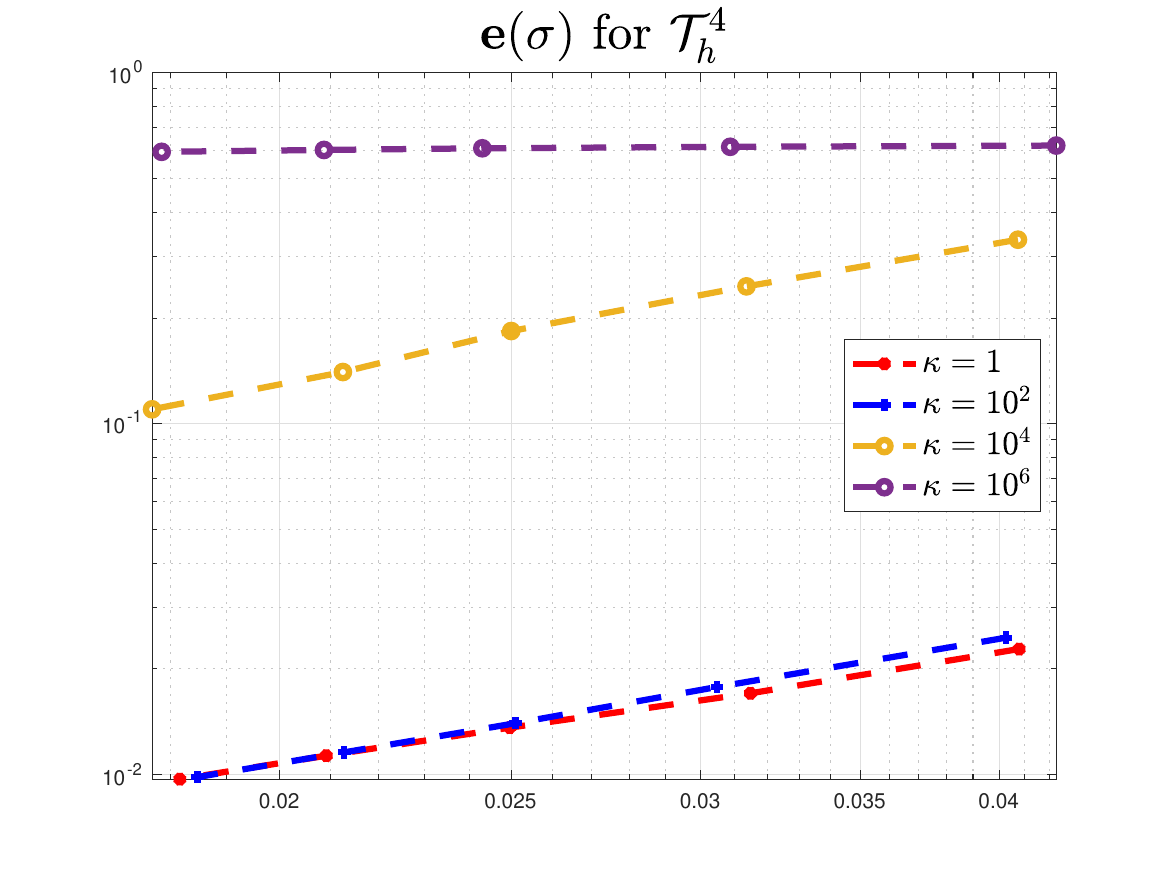}
	\centering\includegraphics[height=4.2cm, width=4.2cm]{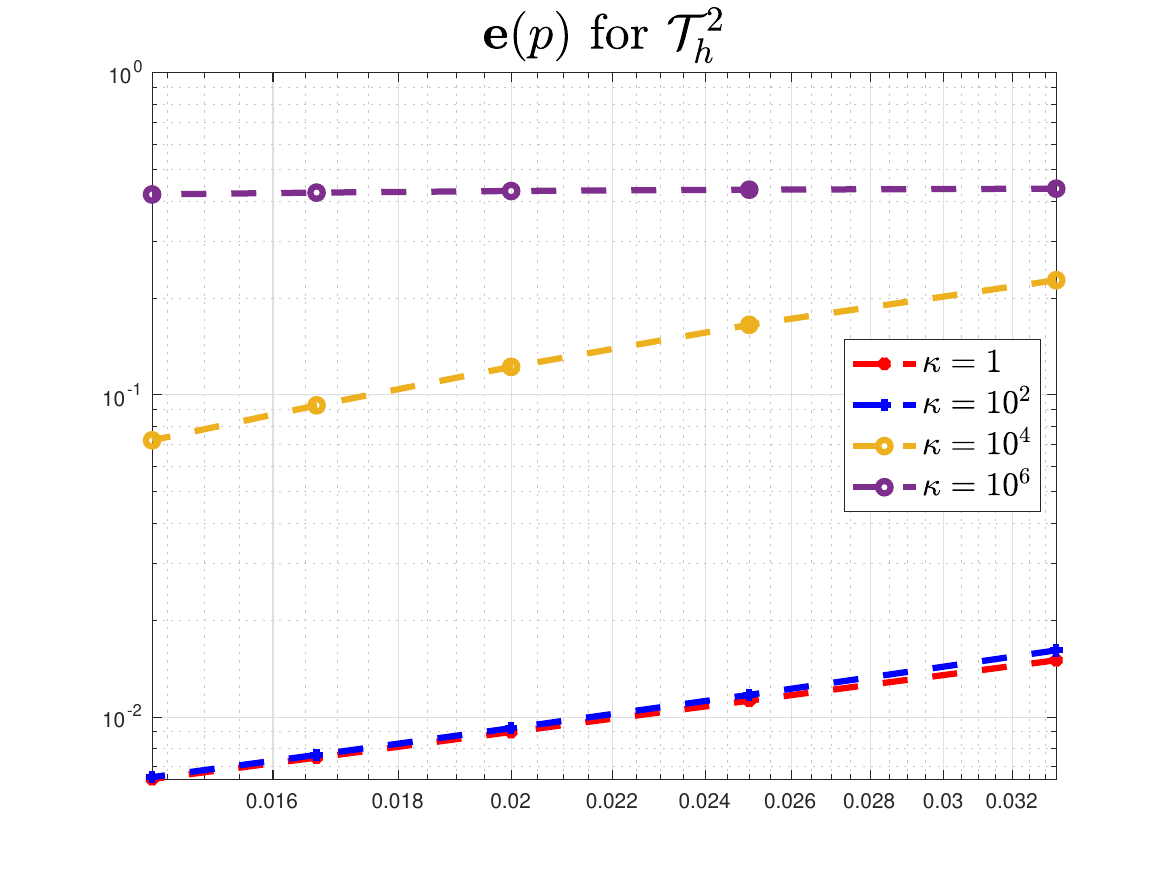}
	\centering\includegraphics[height=4.2cm, width=4.2cm]{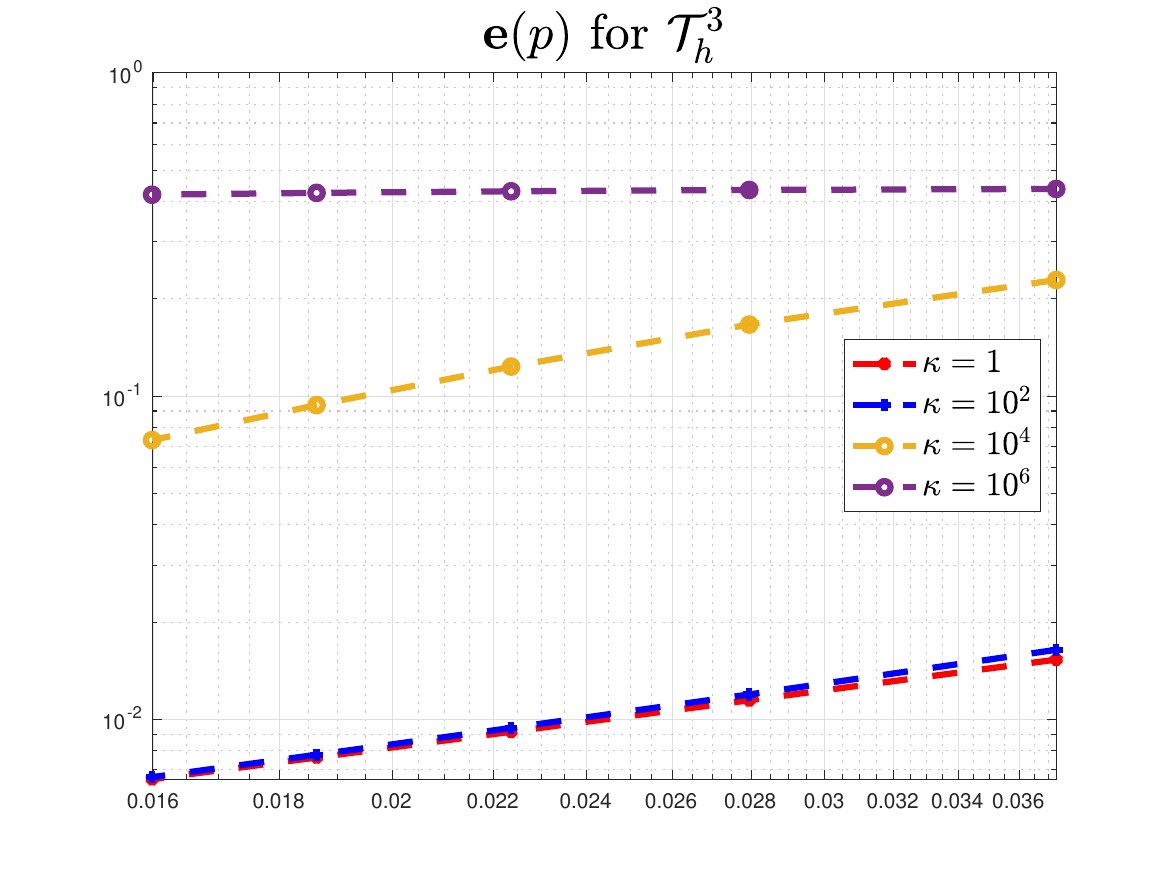}
	\centering\includegraphics[height=4.2cm, width=4.2cm]{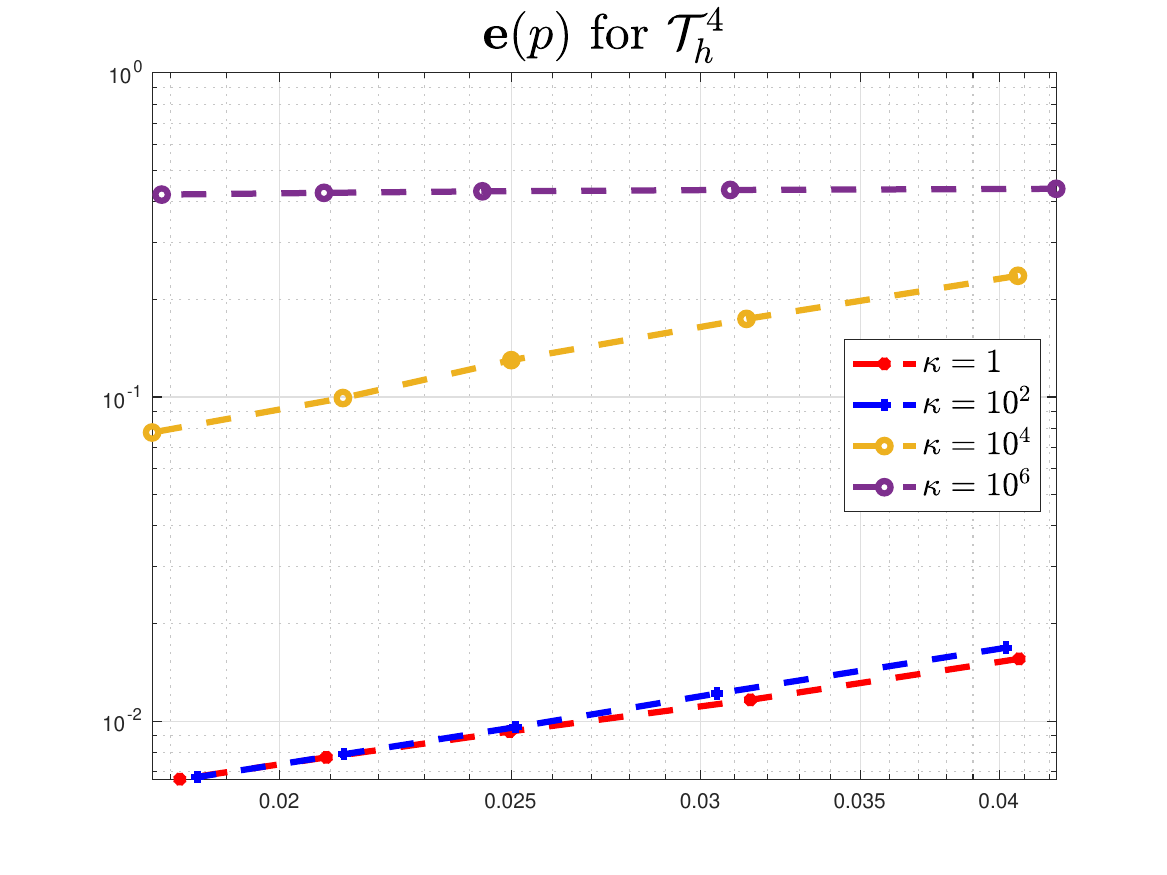}

		%	centering\includegraphics[height=5cm, width=5cm]{wh4L.pdf}
        % \end{minipage}
     				\caption{Test 3: Errors in the velocity, pressure, and pseudo-stress tensor for different values of $\kappa$ and different mesh families.}
		\label{fig:curve_test3_1}
%	\end{center}
\end{figure}

Figure \ref{fig:curve_test3_1} reveals that the permeability $\kappa$ has influence on the error of the unknowns. In fact, we observe that the pressure and pseudostress tensor errors are affected when the permeability increases, where for $\kappa=1$ the error for these unknowns is precisely $\mathcal{O}(h)$. This is in some sense expectable, since from \eqref{eq:bound_sigma} and \eqref{eq:bound_u} we observe a dependency of $\kappa$ on the stability bound of  each unknown which is reflected in the numerical results. Contrary,  the velocity error shows to be independent  of the permeability value and the mesh,  remaining  stable  on each selected configuration of the test.

\subsection{Test 4: Non-homogeneous boundary condition}
For this test, let us consider the domain $\O:=(0,1)^2$ and the physical parameters $\nu=1$, $\kappa=1$ and $\boldsymbol{\beta}:=(1,1)^t$.  The forcing terms $\boldsymbol{f}$ acting on $\O$ and $\boldsymbol{g}$ on the boundary $\partial\O$ are chosen in such a way that the exact velocity and pressure solve  problem  \eqref{eq:Oseen_system}. These solutions are given by 
$$\bu(x,y)=\left(y,-x\right)^{t}\qquad\text{and }\quad p(x,y)={x^2+y^2}-\dfrac{2}{3}.$$

Let us mention that in particular, the boundary conditions that we will consider for this test is $\bu=\boldsymbol{g}$ on $\partial\O$. For this test, we have proved computationally  the method for different polygonal meshes as is  depicted in Figure \ref{FIG:Meshes}. Since the results do not present significant differences, we report in Figure \ref{fig:curve_test4_1} error curves for $\CT_h^3$.
 \begin{figure}[H]
	%\begin{center}
	%	\begin{minipage}{14cm}
	\centering\includegraphics[height=4.2cm, width=4.2cm]{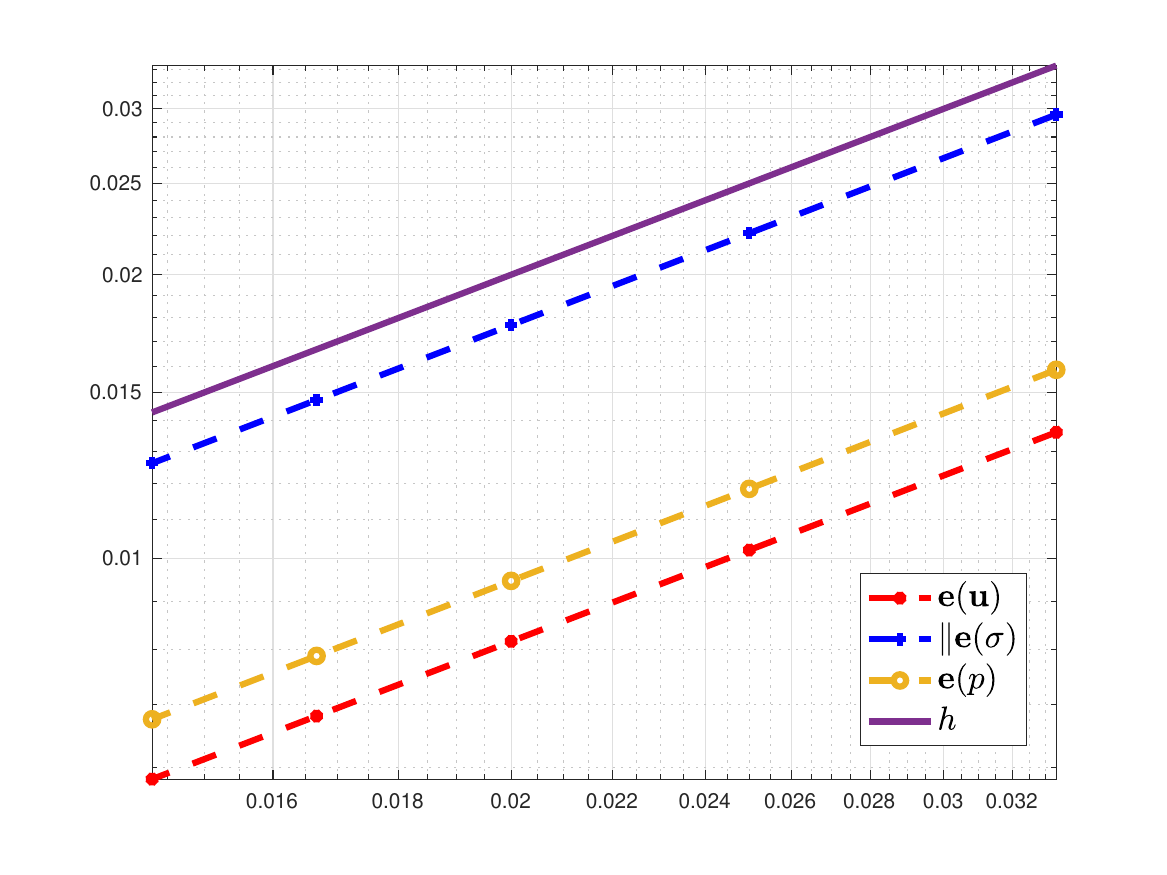}
	\centering\includegraphics[height=4.2cm, width=4.2cm]{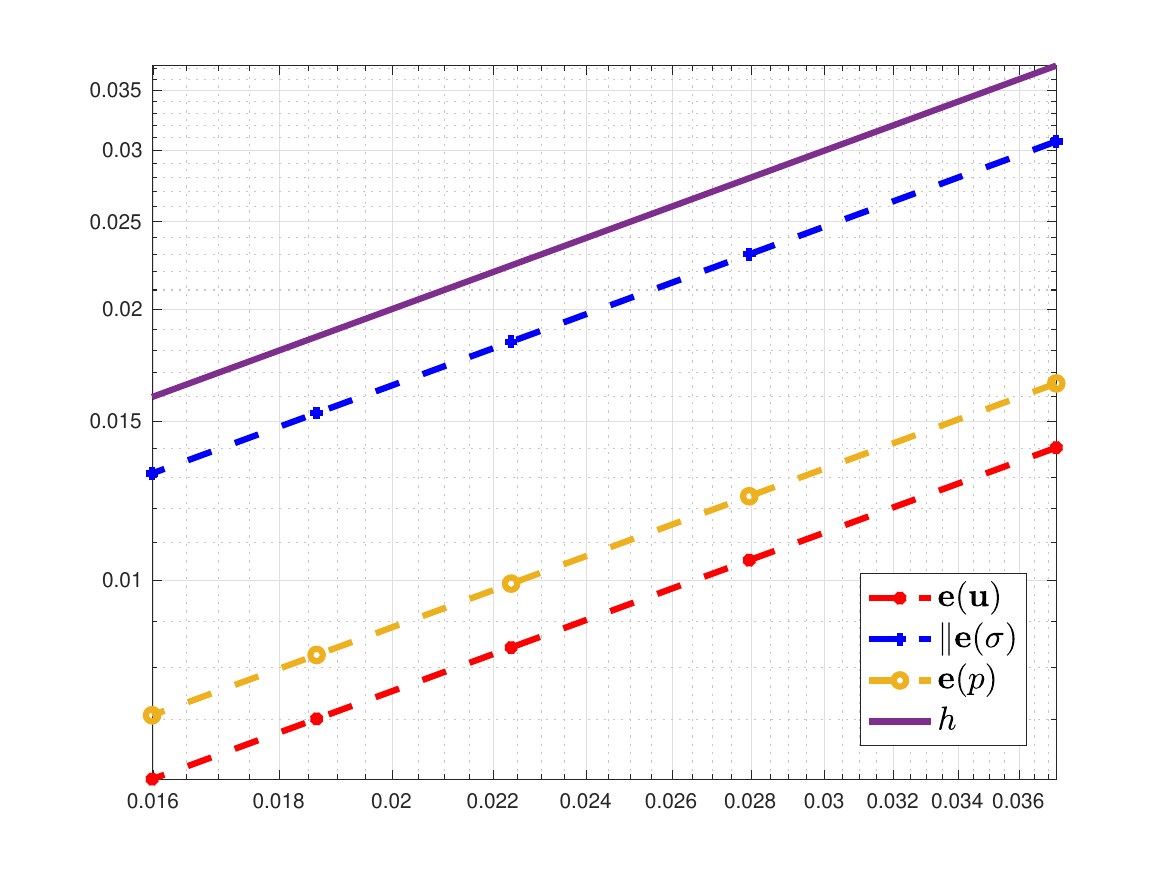}
       	\centering\includegraphics[height=4.2cm, width=4.2cm]{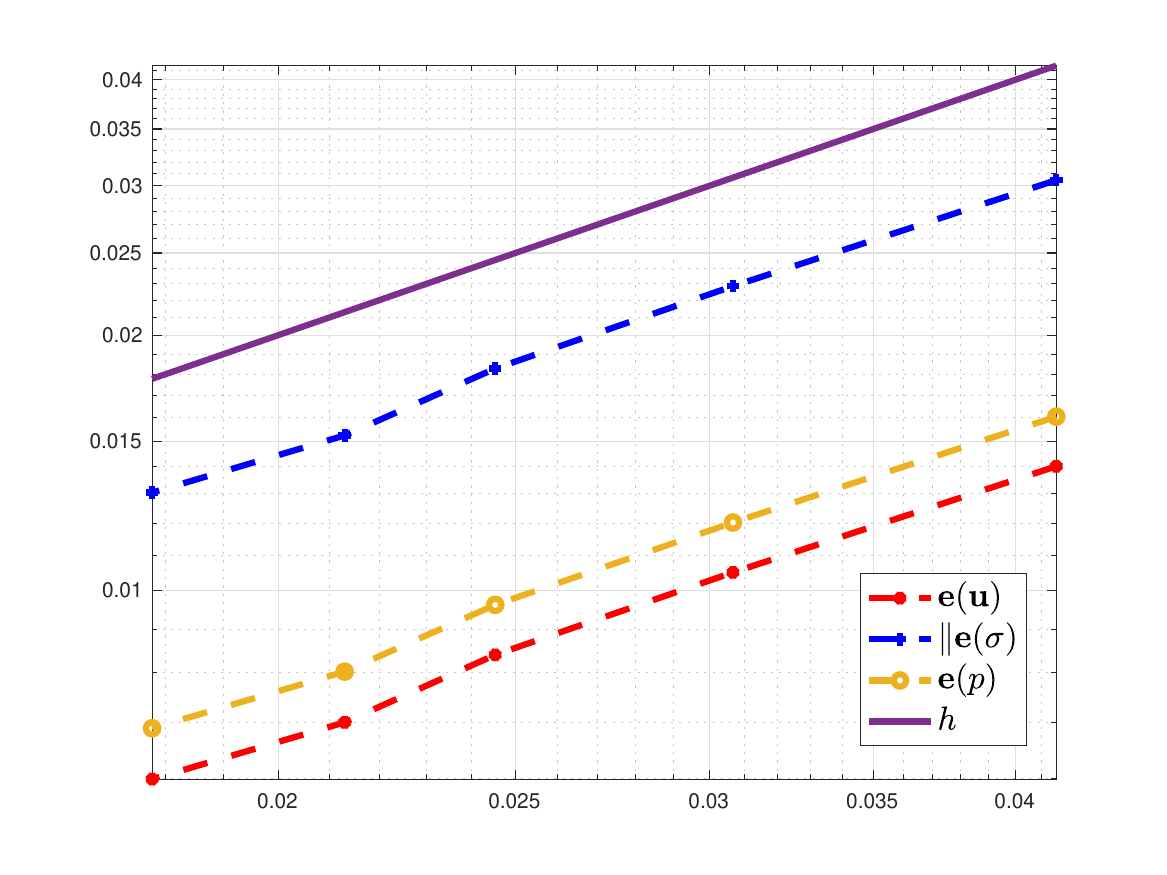}
	%\centering\includegraphics[height=4.2cm, width=4.2cm]{errvcd.pdf}\\
	 %      	\centering\includegraphics[height=4.2cm, width=4.2cm]{errsigc.pdf}
	%\centering\includegraphics[height=4.2cm, width=4.2cm]{errsignc.pdf}
	%\centering\includegraphics[height=4.2cm, width=4.2cm]{errsigcd.pdf}
	%\centering\includegraphics[height=4.2cm, width=4.2cm]{errpc.pdf}
	%\centering\includegraphics[height=4.2cm, width=4.2cm]{errpcd.pdf}
	%\centering\includegraphics[height=4.2cm, width=4.2cm]{errpcd.pdf}

		%	centering\includegraphics[height=5cm, width=5cm]{wh4L.pdf}
        % \end{minipage}
     				\caption{Test 4: Error curves computed with $\CT_h^2$ (left), $\CT_h^3$ (middle) and $\CT_h^4$ (right). In red, the error of the velocity, in blue the error of the pseudostress, and in yellow the error of the pressure.}
		\label{fig:curve_test4_1}
%	\end{center}
\end{figure}

We observe from Figure \ref{fig:curve_test4_1} that the order of convergence for each of the unknowns of our problem is $\mathcal{O}(h)$ as is precisely predicted. As we mention before, these results were analogous for other families of polygonal meshes. We also report plots of the computed fields with $\CT_h^3$.
\begin{figure}[H]
	\begin{center}
		\begin{minipage}{13cm}
			\centering\includegraphics[height=5.2cm, width=5.2cm]{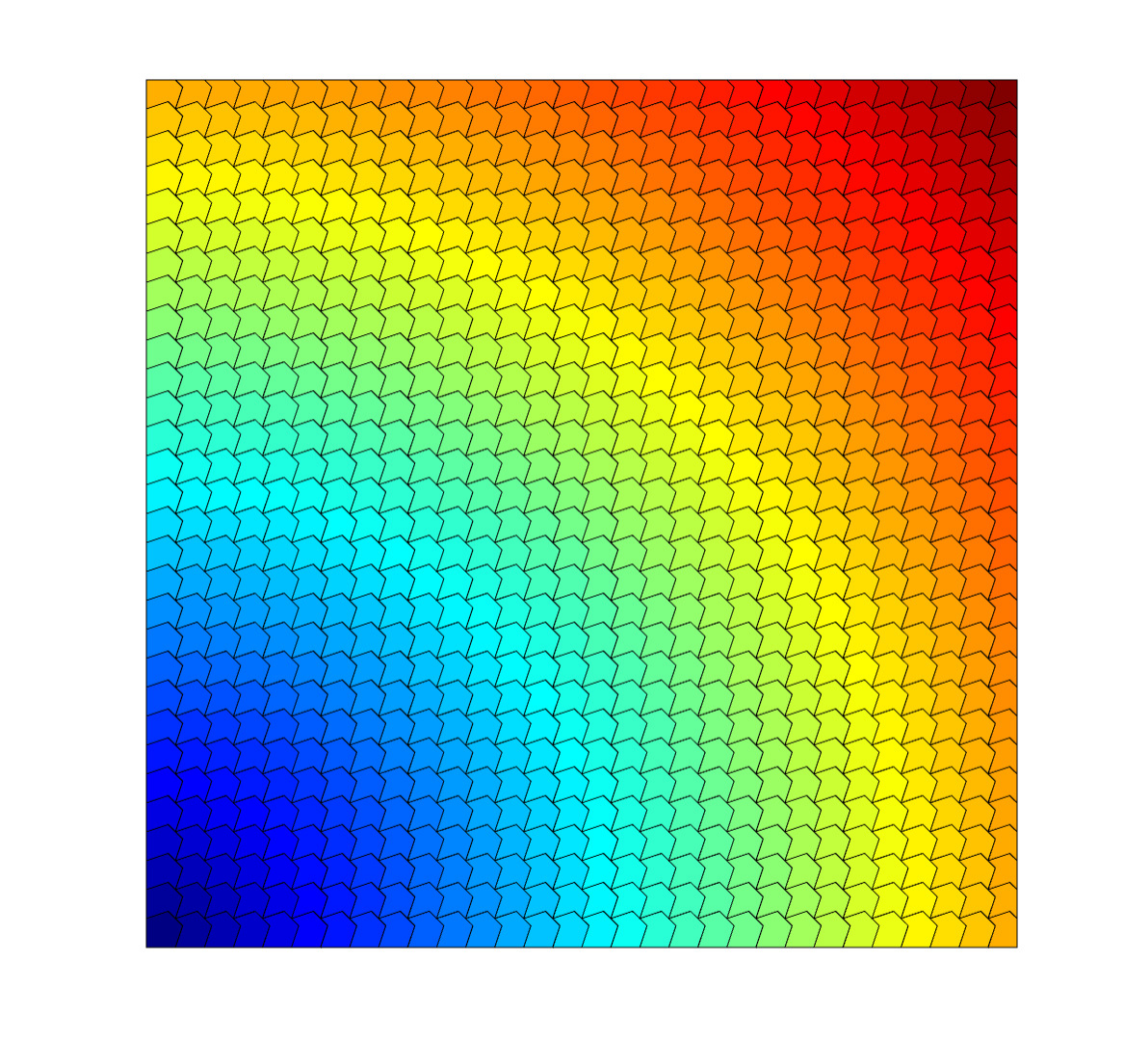}\hspace{1.4cm}
       	\centering\includegraphics[height=5.2cm, width=5.2cm]{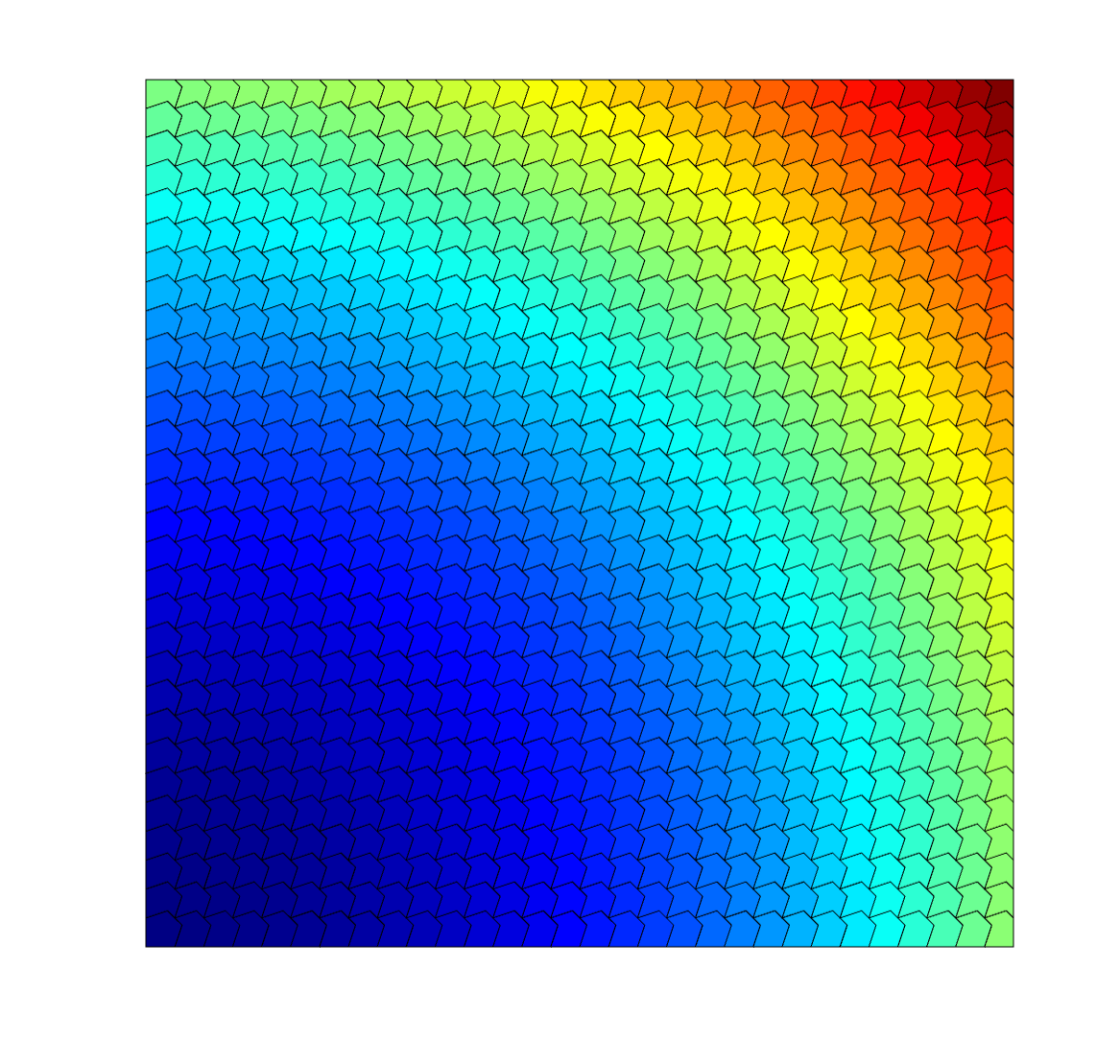}
	
		%	centering\includegraphics[height=5cm, width=5cm]{wh4L.pdf}
         \end{minipage}
     				\caption{Test 4: Left: plot of the computed velocity magnitude $\bu_h$ with mesh $\CT_h^3$. Right: plot of the  computed pressure fluctuation  $p_h$ with mesh $\CT_h^3$.}
		\label{fig:curve_plots_ncT4up}
	\end{center}
\end{figure}
\begin{figure}[H]
	\begin{center}
		\begin{minipage}{13cm}
			\centering\includegraphics[height=5.2cm, width=5.2cm]{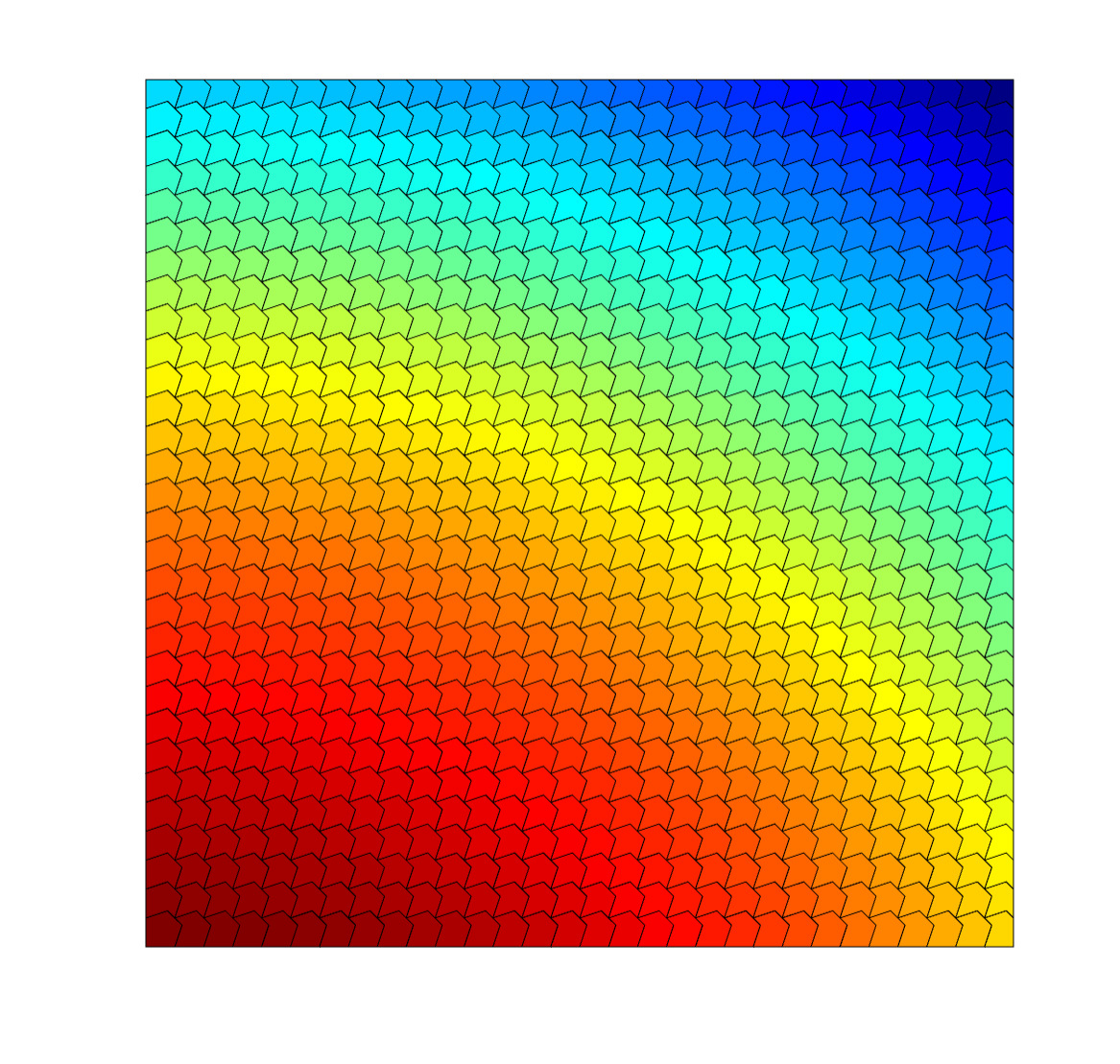}\hspace{1.4cm}
       	\centering\includegraphics[height=5.2cm, width=5.2cm]{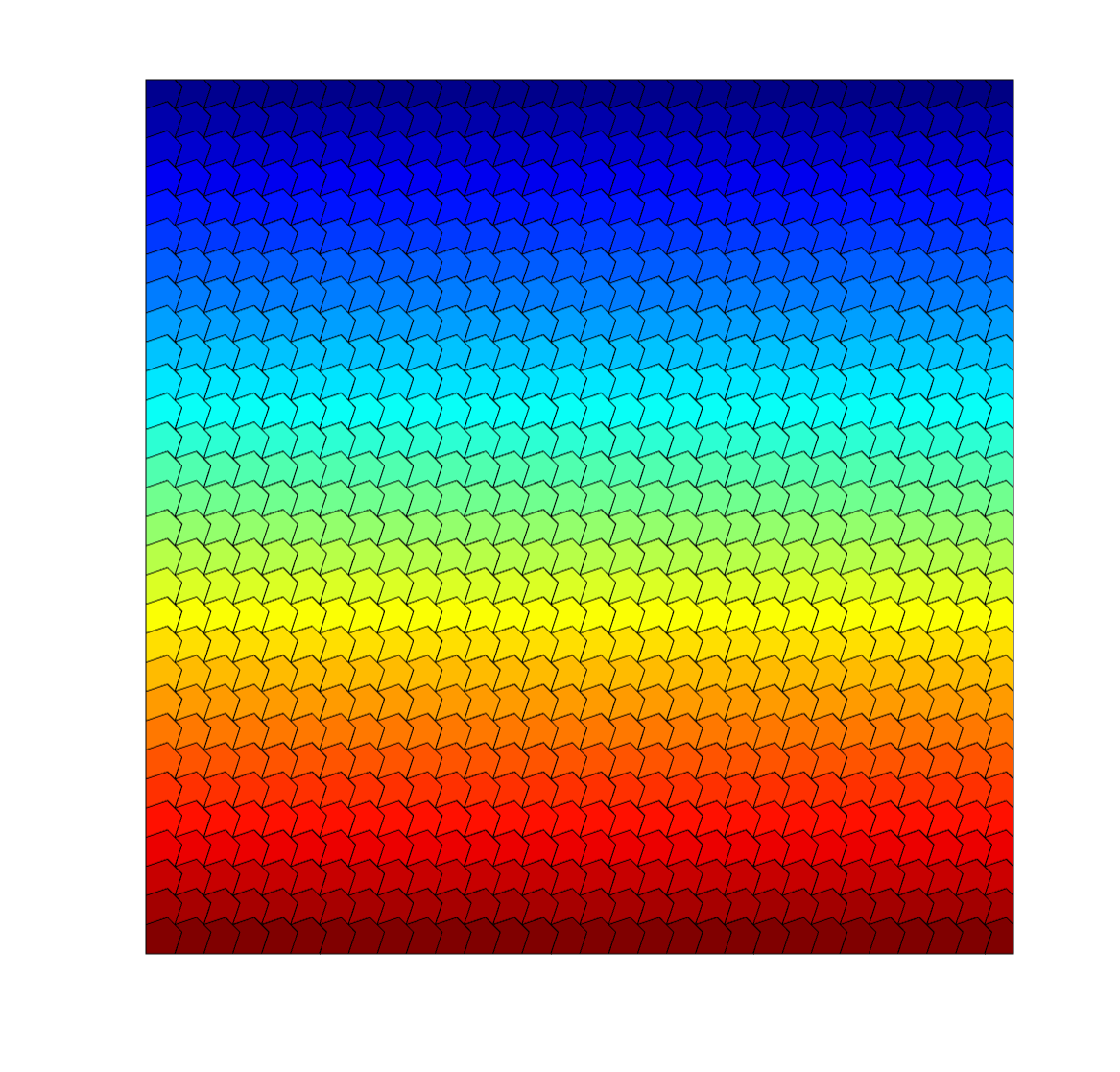}
	\centering\includegraphics[height=5.2cm, width=5.2cm]{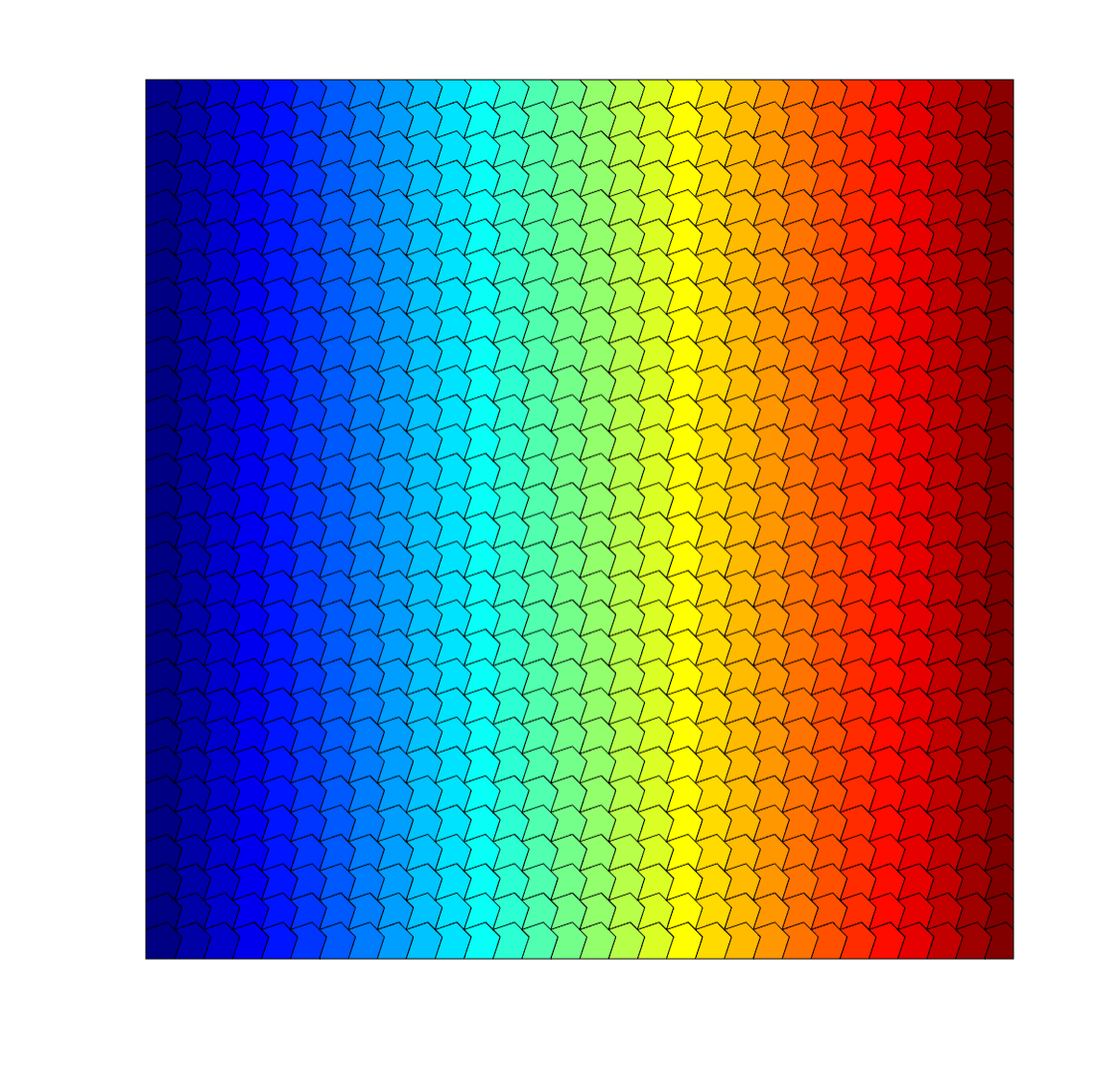}\hspace{1.4cm}
       	\centering\includegraphics[height=5.2cm, width=5.2cm]{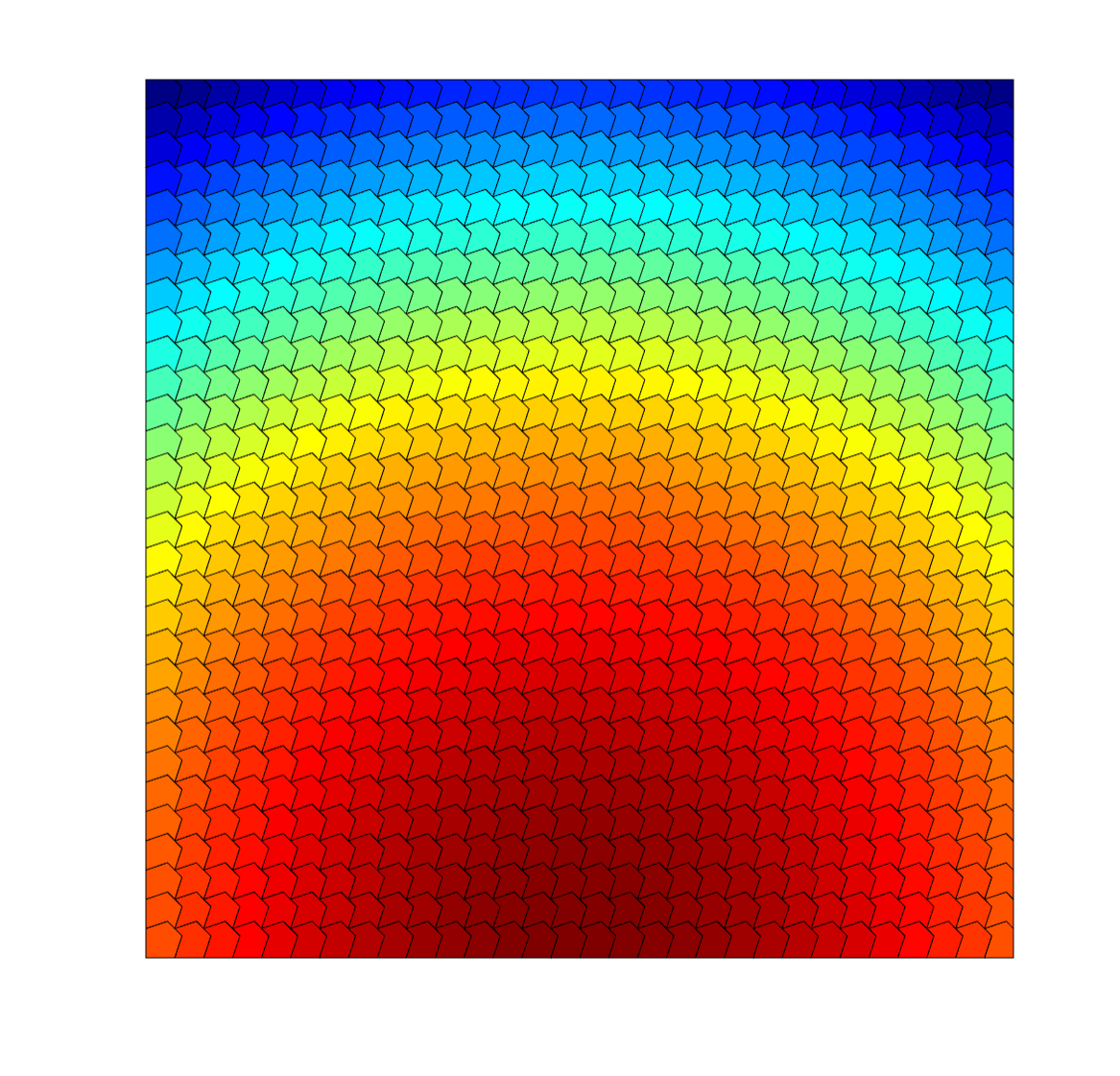}

		%	centering\includegraphics[height=5cm, width=5cm]{wh4L.pdf}
         \end{minipage}
     				\caption{Test 4: Computed pseudostress tensor components with $\CT_h^3$. Top row,  from left to right: components $\boldsymbol{\sigma}_{h,11}$ and  $\boldsymbol{\sigma}_{h,12}$. Bottom row,  from left to right:  $\boldsymbol{\sigma}_{h,21}$ and $\boldsymbol{\sigma}_{h,22}$.}
		\label{fig:curve_plots_ncT4}
	\end{center}
\end{figure}

Finally from Figures \ref{fig:curve_plots_ncT4up} and \ref{fig:curve_plots_ncT4} we observe that the boundary condition $\boldsymbol{u}(x,y)=\boldsymbol{g}(x,y)=(y,-x)^t$ is captured by our method.
\subsection{Test 5: boundary layer} In this test we consider the domain $\Omega=(0,1)^2$, with $\boldsymbol{\beta}=(1,1)^2$, $\nu=10^{-3}$ and $\kappa=10^{-2}$. The function $\phi(x,y)$ is defined by
$$\phi(x,y)=x^2(1-e^{\lambda(x-1)})^2y^2(1-e^{\lambda(y-1)})^2, $$
where $\lambda=\dfrac{1}{2\nu}.$ The vector field $\boldsymbol{f}$ is chosen so that the exact solution to problem \eqref{eq:Oseen_system} is given by: 
$$\bu(x,y):=\left(\dfrac{\partial \phi}{\partial y},-\dfrac{\partial \phi}{\partial x}\right)\qquad\text{and }\quad p(x,y):=e^{x+y}-(e-1)^2.$$

For this test we consider  the boundary condition $\bu=\boldsymbol{0}$ on $\partial\O$. In Table \ref{TABLA:333} we report the convergence history
of our variables.

\begin{footnotesize}
\begin{table}[H]
\begin{center}
\caption{Test 5: Convergence history of the velocity, pressure, and pseudostress for different polygonal meshes on $\O=(0,1)^2$.}
\begin{tabular}{|c|c||c|c||c|c||c|c||c|}
\hline
\multicolumn{2}{ |c|| }{$\CT_{h}^1$} & \multicolumn{2}{ |c|| }{$\CT_{h}^2$} & \multicolumn{2}{ |c|| }{$\CT_{h}^3$} & \multicolumn{2}{ |c|| }{$\CT_{h}^4$}\\
\hline
 $h$ &  $r(\bu)$ & $h$ &  $r(\bu)$ & $h$ &  $r(\bu)$ & $h$ &  $r(\bu)$  \\
\hline
    
    0.0118 &   --        &  0.0167&  --         &  0.0224  & ---        &0.0208   & --\\
    0.0101 & 1.9664 & 0.0143 &  1.6399 &  0.0186 &1.6272 & 0.0179  &1.7758\\
    0.0088 & 1.9369 & 0.0125 &  1.6100 &  0.0160 &1.6016 & 0.0157  &1.5712\\
    0.0079 & 1.8894 & 0.0111  & 1.5732 &  0.0140 &1.5634  & 0.0139 &1.5722\\
    0.0071 & 1.8334 & 0.0100 &  1.5339 &  0.0124 &1.5198 & 0.0127  &1.8198\\
    0.0064 & 1.7749 & 0.0091 &  1.4945 &  0.0112 &1.4749 & 0.0115  &1.4325\\
    0.0059 & 1.7172 & 0.0083 &  1.4566 &  0.0102 &1.4313 & 0.0106  &1.6301\\
    0.0054 & 1.6623 & 0.0077 &  1.4208 &  0.0093 &1.3903 & 0.0097  &1.2847\\
    0.0051 & 1.6112 & 0.0071 &  1.3876 &  0.0086 &1.3525 & 0.0089  &1.2154\\
    0.0047 & 1.5641 & 0.0067 &  1.3571 &  0.0080 &1.3181 & 0.0084  &1.6683\\
    0.0044 & 1.5212 & 0.0063 &  1.3291 &  0.0075 &1.2871 & 0.0080  &1.4629\\
    0.0042 & 1.4821 & 0.0059 &  1.3037 &  0.0070 &1.2593 & 0.0075  &1.4100\\
    0.0039 & 1.4468 & 0.0056 &  1.2806 &  0.0066 &1.2345 & 0.0073  &1.9284\\
    0.0037 & 1.4148 & 0.0053 &  1.2597 &  0.0062 &1.2124 & 0.0068  &1.0432\\
    0.0035 &1.3859  & 0.0050 & 1.2407  &  0.0059&1.1927  & 0.0064 &0.8960\\
   
 \hline
 $h$ &  $r(\bsig)$ & $h$ &  $r(\bsig)$ & $h$ &  $r(\bsig)$ & $h$ &  $r(\bsig)$  \\
\hline
 0.0118  &              &  0.1189  &               & 0.0224 &               &  0.0208 & \\
 0.0101  &  1.9564 &  0.0896 &  1.5533  & 0.0186 &   1.5063 &  0.0179 &  1.6461\\
 0.0088  &  1.8987 &  0.0700 &  1.5223  & 0.0160 &   1.4747 &  0.0157 &  1.5211\\
 0.0079  &  1.8226 &  0.0562 &  1.4860  & 0.0140 &   1.4342 &  0.0139 &  1.4589\\
 0.0071  &  1.7409 &  0.0462 &  1.4482  & 0.0124 &   1.3915 &  0.0127 &  1.6913\\
 0.0064  &  1.6614 &  0.0387 &  1.4113   & 0.0112 &  1.3500  &  0.0115 & 1.3693\\
 0.0059  &  1.5881 &  0.0330 &  1.3763  & 0.0102 &   1.3113  &  0.0106 & 1.4902\\
 0.0054  &  1.5228 &  0.0285 &  1.3440  & 0.0093 &   1.2763 &  0.0097 &  1.2319\\
 0.0051  &  1.4654 &  0.0248 &  1.3145  & 0.0086 &   1.2451 &  0.0089 &  1.1706\\
 0.0047  &  1.4158 &  0.0219 &  1.2877  & 0.0080 &   1.2176 &  0.0084 &  1.5713\\
 0.0044  &  1.3729 &  0.0195 &  1.2636  & 0.0075 &   1.1934 &  0.0080 &  1.3918\\
 0.0042  &  1.3359 &  0.0175 &  1.2419  & 0.0070 &   1.1723 &  0.0075 &  1.3171\\
 0.0039  &  1.3040 &  0.0158 &  1.2224  & 0.0066 &   1.1537 &  0.0073 &  1.8852\\
 0.0037  &  1.2764 &  0.0144 &  1.2048  & 0.0062 &   1.1375 &  0.0068 &  0.9766\\
 0.0035  &  1.2525&   0.0131 &  1.1891 & 0.0059  &  1.1232  & 0.0064  & 0.8538\\
 \hline
 $h$ &  $r(p)$ & $h$ &  $r(p)$ & $h$ &  $r(p)$ & $h$ &  $r(p)$  \\
\hline
0.0118 &                  &   0.1189  &               &   0.0224  &               &   0.0208&  \\
0.0101 &    2.1864   &   0.0896 &   1.8361 &   0.0186  &   1.8325 &   0.0179&    1.8526\\
0.0088 &     2.2064  &   0.0700 &   1.8501 &   0.0160  &   1.8535 &   0.0157&    1.7500\\
0.0079 &     2.2039  &   0.0562 &   1.8571 &   0.0140  &   1.8621 &   0.0139&    1.8601\\
0.0071 &     2.1884  &   0.0462 &   1.8588 &   0.0124  &   1.8632 &   0.0127&    1.9242\\
0.0064 &     2.1656  &   0.0387 &   1.8564 &   0.0112   &  1.8588  &   0.0115&   1.8034\\
0.0059 &     2.1390  &   0.0330 &   1.8508 &   0.0102  &   1.8505 &   0.0106&    2.0853\\
0.0054 &     2.1106  &   0.0285  &  1.8426  &   0.0093  &  1.8391  &   0.0097&   1.7350\\
0.0051 &     2.0816  &   0.0248 &   1.8324 &   0.0086  &   1.8255 &   0.0089&    1.5379\\
0.0047 &     2.0527  &   0.0219 &   1.8206 &   0.0080  &   1.8101 &   0.0084&    2.2742\\
0.0044 &     2.0243  &   0.0195 &   1.8075 &   0.0075  &   1.7933 &   0.0080&    2.1110\\
0.0042 &     1.9965  &   0.0175 &   1.7934 &   0.0070  &   1.7754 &   0.0075&    1.8566\\
0.0039 &     1.9696  &   0.0158 &   1.7784 &   0.0066  &   1.7569 &   0.0073&    2.7260\\
0.0037 &     1.9434  &   0.0144 &   1.7628 &   0.0062  &   1.7377 &   0.0068&    1.4209\\
0.0035 &     1.9180  &   0.0131 &   1.7467 &   0.0059  &   1.7183 &   0.0064&    1.1649\\
 \hline
\end{tabular}
\label{TABLA:333}
\end{center}
\end{table}
\end{footnotesize}
We observe from Table \ref{TABLA:333} that  the order of convergence for the velocity and pseudostress, as $h\rightarrow 0$, is $\mathcal{O}(h)$ as the theory predicts. On the other hand, for the pressure we observe an improved order of convergence $\mathcal{O}(h^2)$ compared with the order theoretically proved in Corollary \ref{cor:error_prressure}. This improvement was attained for all the meshes of this test. The analysis of this behavior is further investigation. 

We end this test with plots of the velocity magnitude, pressure fluctuation, and the components of the pseudostress. We observe the velocity field $\bu$ develops a boundary layer along the lines $x=1$ and $y=1$ as is expectable for this experiment.
\begin{figure}[H]
	\begin{center}
		\begin{minipage}{13cm}
			\centering\includegraphics[height=4.9cm, width=4.9cm]{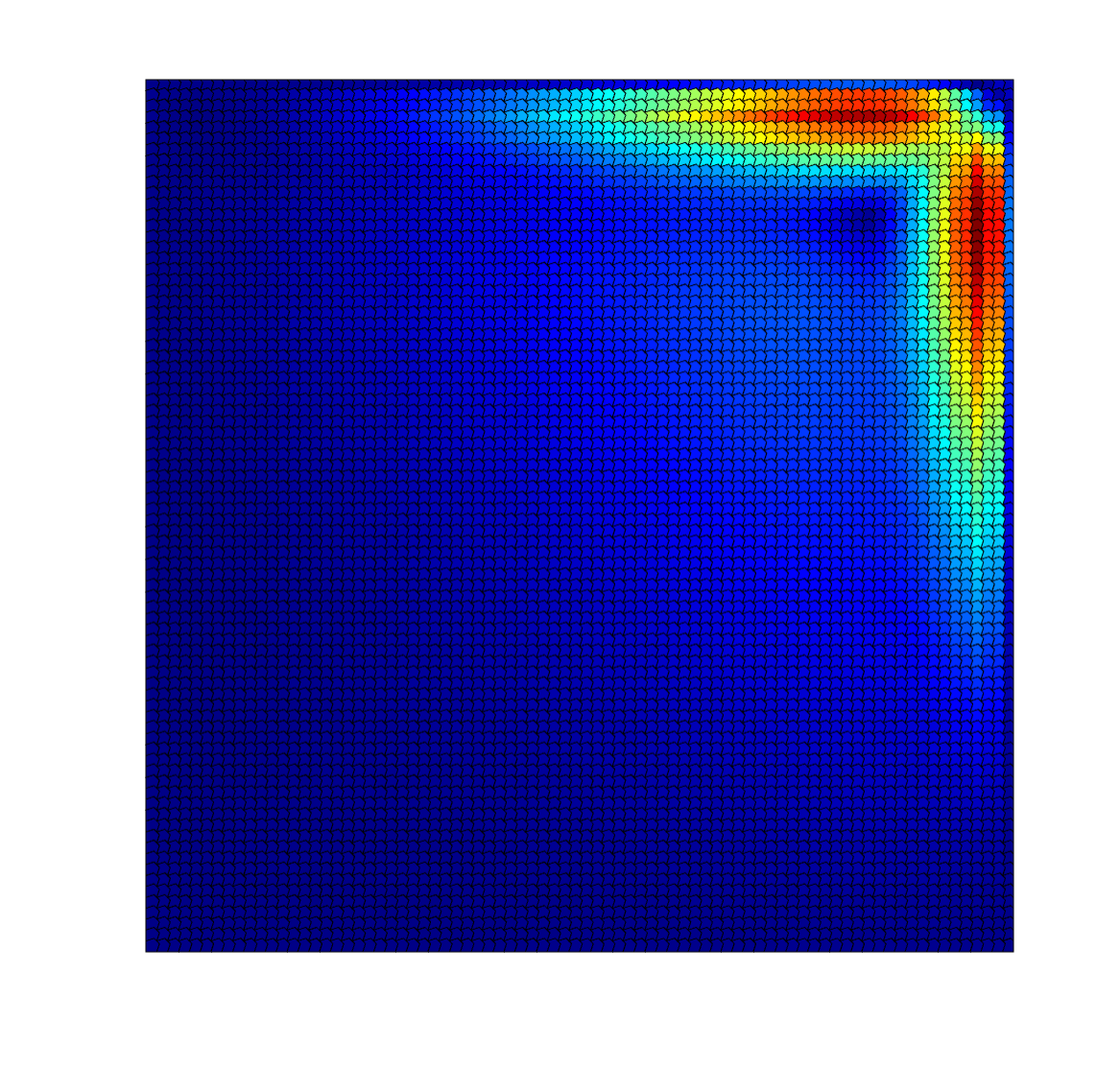}\hspace{1.4cm}
			\centering\includegraphics[height=4.9cm, width=4.9cm]{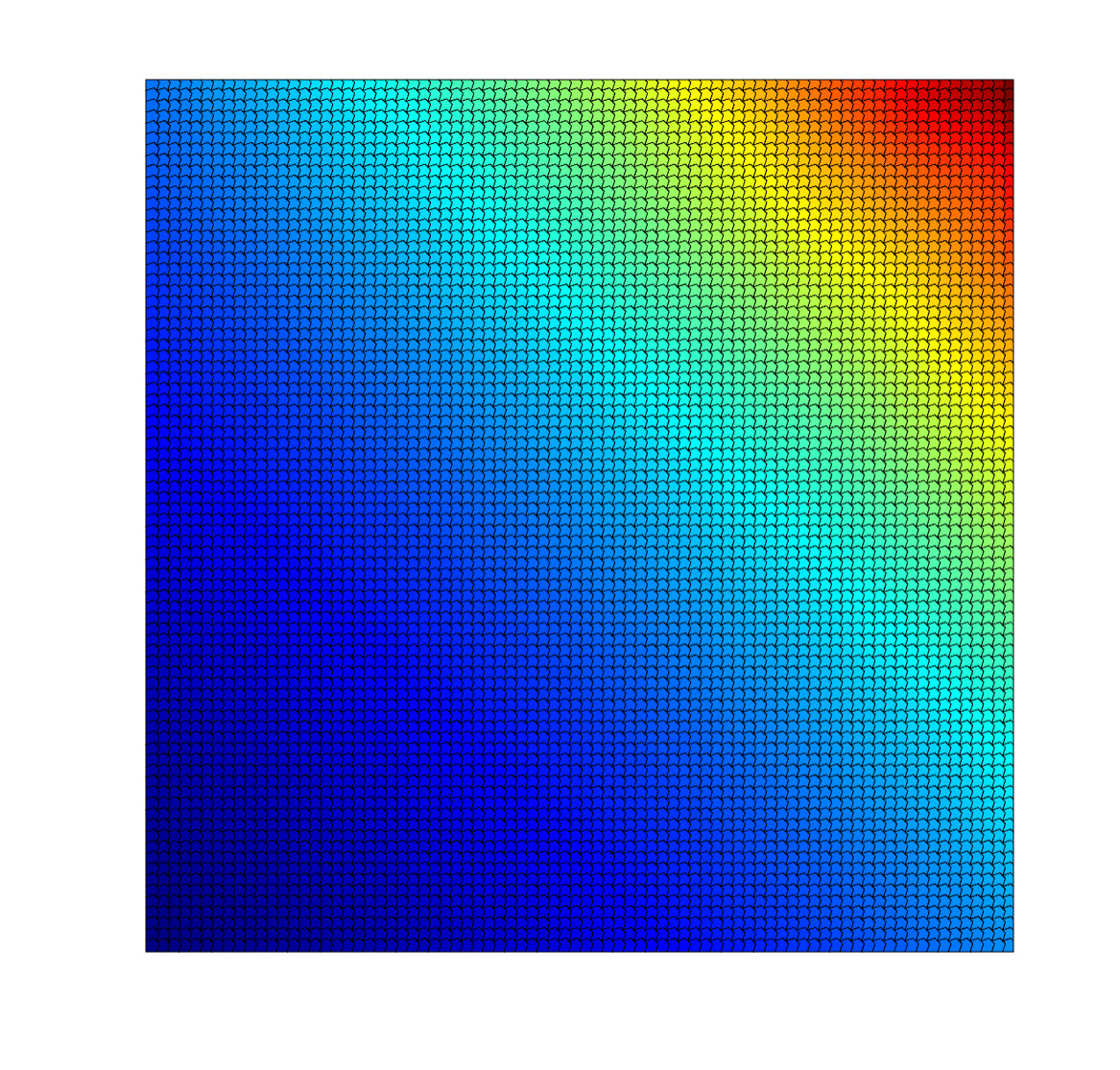}
			         \end{minipage}
     				\caption{Test 5: Left: plot of the computed velocity magnitude $\bu_h$ with mesh $\CT_h^3$. Right: plot of the  computed pressure fluctuation  $p_h$ with mesh $\CT_h^3$.}
		\label{fig:test_5curve_plots_square}
	\end{center}
\end{figure}
\begin{figure}[H]
	\begin{center}
	\vspace{-0.48cm}
		\begin{minipage}{12cm}
			\centering\includegraphics[height=4.9cm, width=4.9cm]{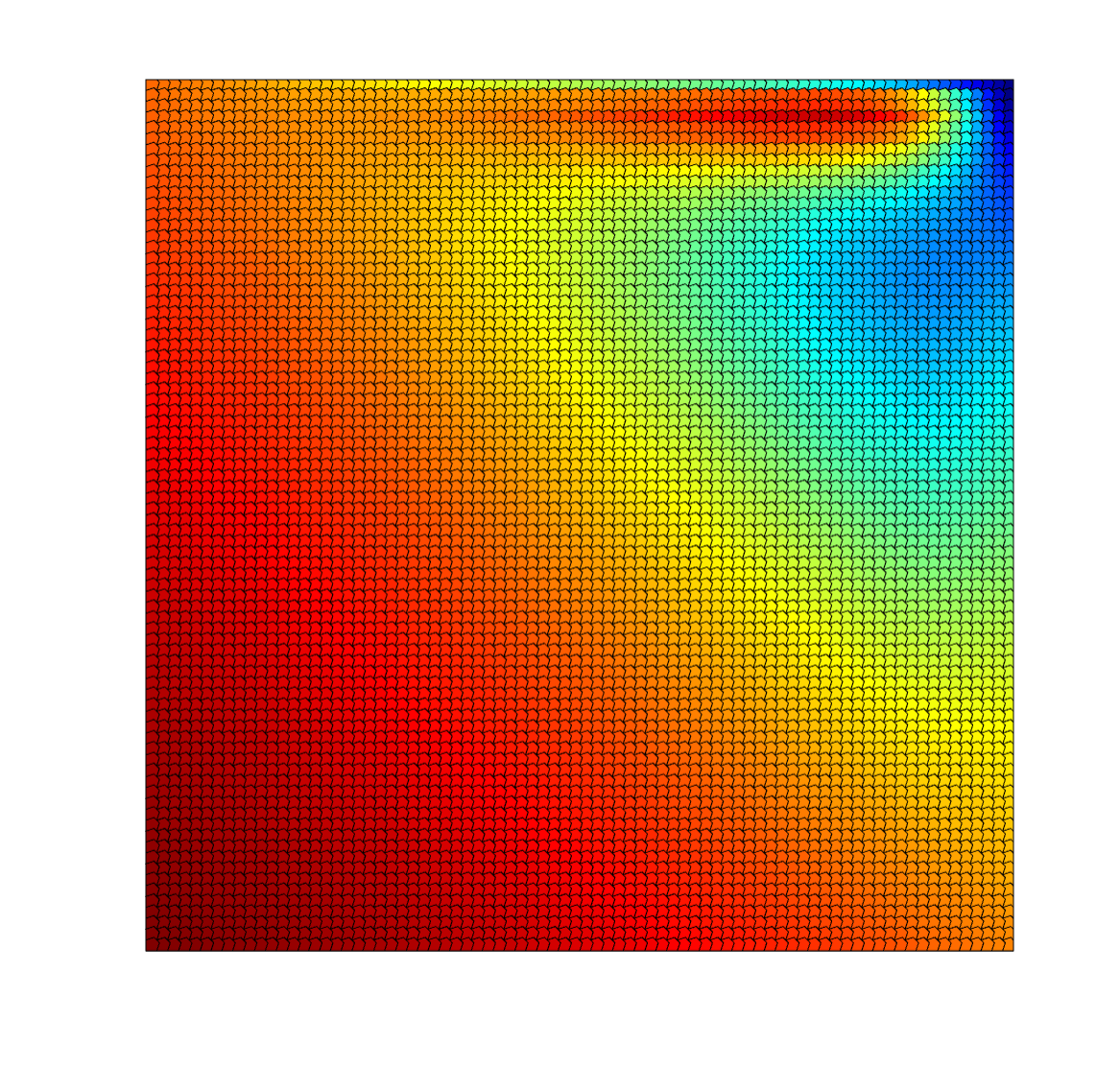}\hspace{1.4cm}
       	\centering\includegraphics[height=4.9cm, width=4.9cm]{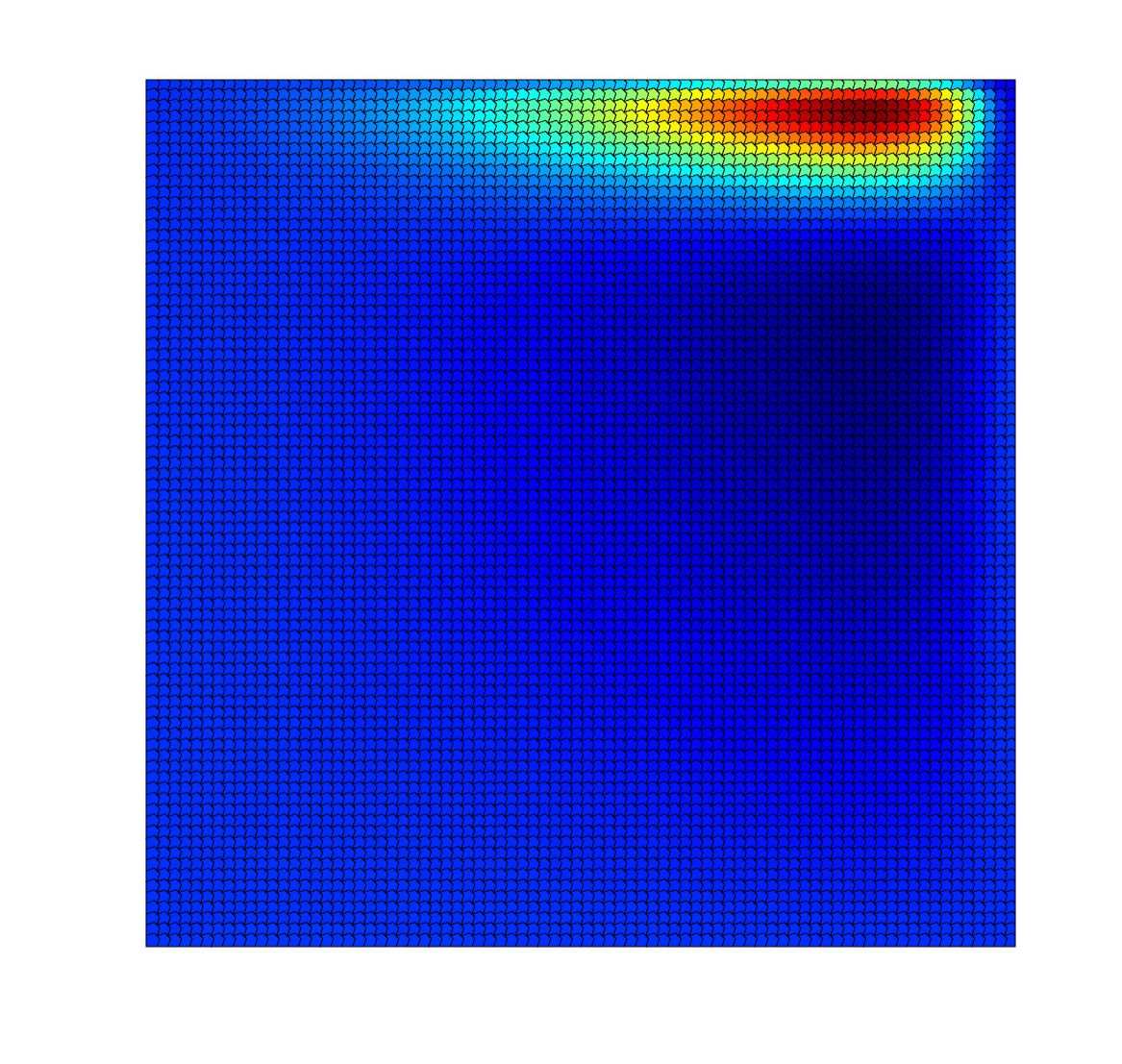}
	\centering\includegraphics[height=4.9cm, width=4.9cm]{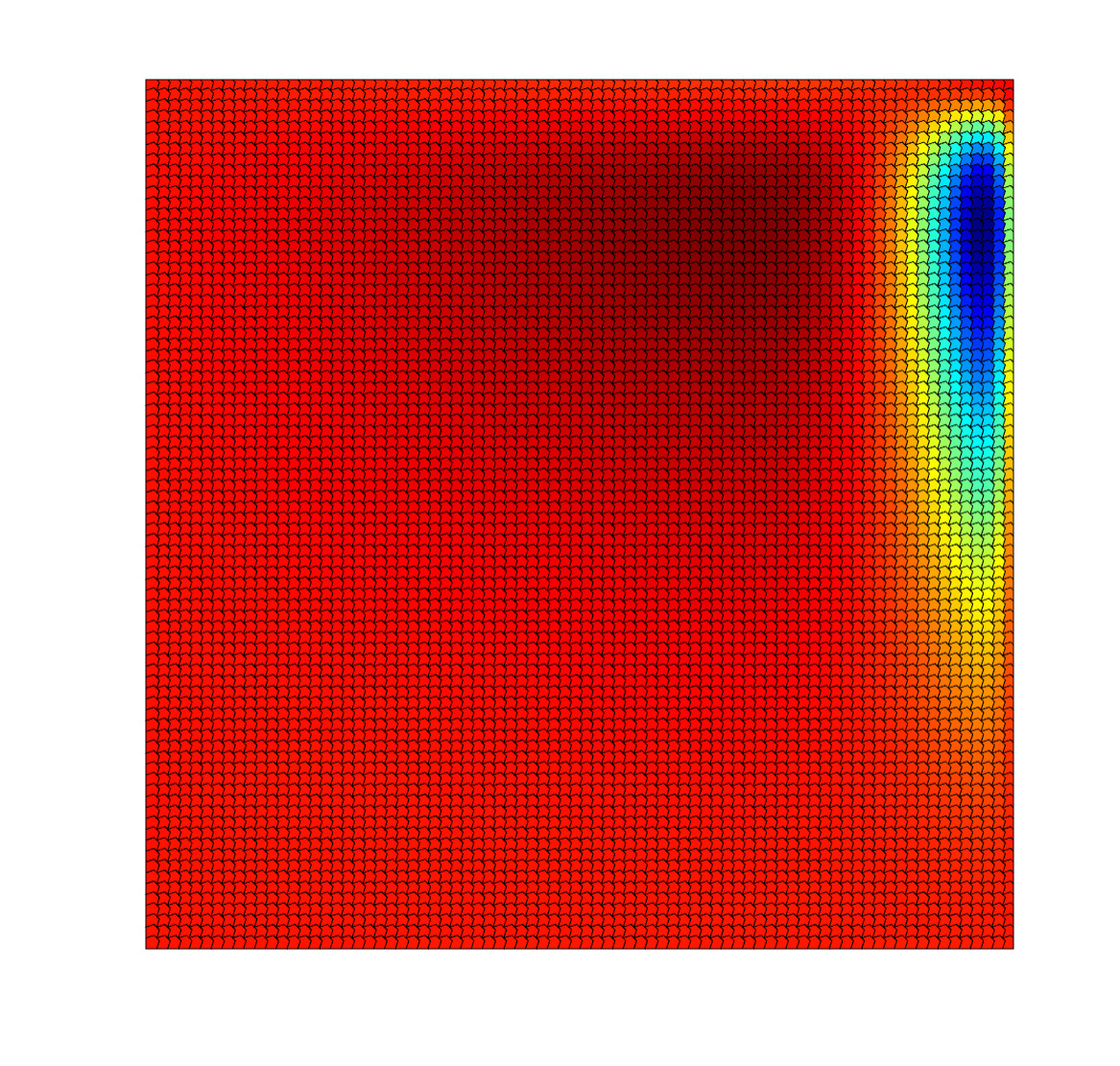}\hspace{1.4cm}
       	\centering\includegraphics[height=4.9cm, width=4.9cm]{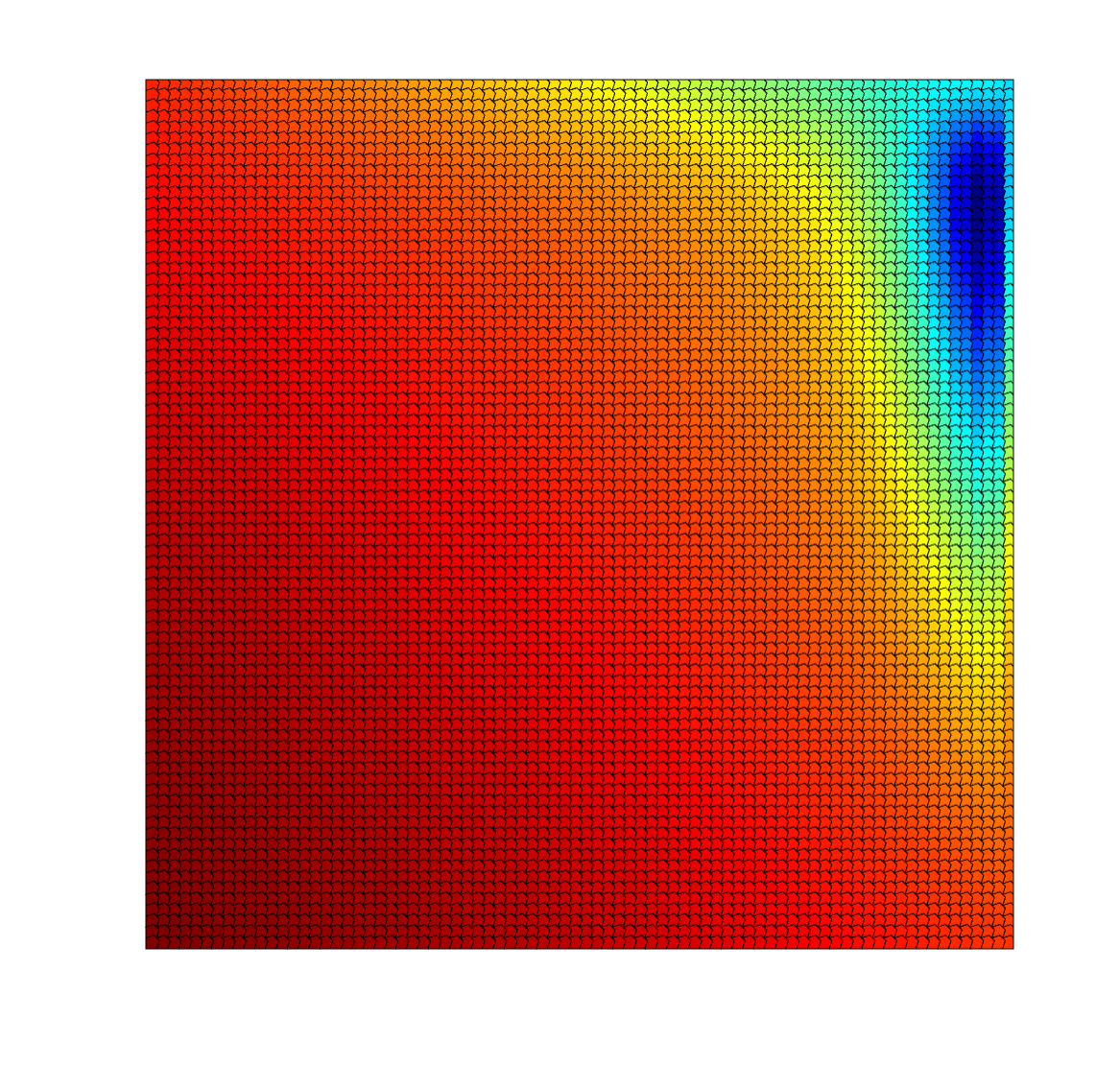}

		%	centering\includegraphics[height=5cm, width=5cm]{wh4L.pdf}
         \end{minipage}
     				\caption{Test 5: Computed pseudostress tensor components with $\CT_h^3$. Top row,  from left to right: components $\boldsymbol{\sigma}_{h,11}$ and  $\boldsymbol{\sigma}_{h,12}$. Bottom row,  from left to right:  $\boldsymbol{\sigma}_{h,21}$ and $\boldsymbol{\sigma}_{h,22}$.}
		\label{fig:test_5curve_plots_square}
	\end{center}
\end{figure}
\section{Conclusions}
We have solved the generalized Oseen system using a mixed virtual element scheme, where we have introduced the pseudostress as an additional unknown, allowing us to neglect the pressure in the formulation. We have proved existence and uniqueness of the solution at the continuous level using a fixed-point strategy, which demands smallness assumptions on the data. The discrete version, based on the virtual element method, is stable, and existence and uniqueness of the solution have been proved, together with a priori error estimates for the velocity, pseudostress, and pressure. We have tested the scheme in different scenarios, where the optimal order of convergence is attained. However, for a boundary layer example, we have observed an additional order of convergence for the pressure, which leads us to further investigate the Oseen equations and the virtual element method. Also, an a posteriori error analysis of this problem is under investigation.

%%%________________________________________________________________________________________
%\AS{\subsection{Solution with a boundary layer}
%in this numerical experiment we also consider the computational domain as $\O:=(0,1)^2$. We take $\nu$, $\kappa =50$  and the load term $\boldsymbol{f}$ and boundary
%conditions in such a way that the analytical solution is given by:
%\begin{equation}
%\boldsymbol{u}= 
%\end{equation}
%We observe presence of a  boundary layer on the top-right corner of the domain for small values of...}

%%%________________________________________________________________________________________
%\subsection{A non-convex domain}
%We consider a non-convex domain that we call the L-shaped domain, which is defined by $\O:=(-1,1)\times(-1,1)\backslash[-1,0]\times[-1,0]$.  In this case, the optimal order is not expectable for the solution, due to  the presence of the singularity in $(0,0)$.

\end{document}